\documentclass[10pt,reqno]{amsart}

\usepackage{amsmath, amsfonts, amssymb, latexsym, amsthm}
\usepackage[pagewise]{lineno}
\usepackage{amsmath, amsfonts, amssymb, latexsym}
\usepackage[numbers,sort&compress]{natbib}
\usepackage{mathrsfs}
\usepackage{pdfcomment}
\hypersetup{hidelinks,
	colorlinks=true,
	allcolors=black,
	pdfstartview=Fit,
	breaklinks=true
}

\everymath{\displaystyle}

\newtheorem{theorem}{Theorem}
\theoremstyle{plain}

\newtheorem{lemma}[theorem]{Lemma}
\newtheorem{definition}[theorem]{Definition}

\newtheorem{proposition}[theorem]{Proposition}

\newtheorem{remark}[theorem]{Remark}

\numberwithin{equation}{section}
\numberwithin{theorem}{section}

\newcommand{\cU}{\mathcal{U}}

\newcommand{\cL}{\mathcal{L}}
\newcommand{\cJ}{\mathcal{J}}

\newcommand{\cF}{\mathcal{F}}

\newcommand{\E}{\mathbb{E}}
\newcommand{\R}{\mathbb{R}}
\newcommand{\F}{\mathbb{F}}
\newcommand{\N}{\mathbb{N}}
\newcommand{\bP}{\mathbb{P}} 

\begin{document}
\title
{Norm and time optimal control problems of stochastic heat equations}

\author{\sffamily Yuanhang Liu$^{1}$, Donghui Yang$^1$, Jie Zhong$^{2,*}$   \\
	{\sffamily\small $^1$ School of Mathematics and Statistics, Central South University, Changsha 410083, China. }\\
 {\sffamily\small $^2$ Department of Mathematics, California State University Los Angeles, Los Angeles, 90032, USA}
}
	\footnotetext[2]{Corresponding author: jiezhongmath@gmail.com }

\email{liuyuanhang97@163.com}
\email{donghyang@outlook.com}
\email{jiezhongmath@gmail.com}

\keywords{bang-bang property, controlled stochastic heat equation, time optimal controls, norm optimal controls, equivalence.}
\subjclass[2020]{49J21, 93B05, 93E20}

\maketitle

\begin{abstract}
This paper investigates the norm and time optimal control problems for stochastic heat equations. We begin by presenting a characterization of the norm optimal  control, followed by a discussion of its properties. We then explore the equivalence between the norm optimal  control and time optimal  control, and subsequently establish the bang-bang property of the time optimal  control. These problems, to the best of our knowledge, are among the first to discuss in the stochastic case.
\end{abstract}

\pagestyle{myheadings}
\thispagestyle{plain}
\markboth{EQUIVALENCE AND THE BANG-BANG PROPERTY}{YUANHANG LIU, DONGHUI YANG, AND JIE ZHONG}

\section{Introduction}

Optimal control problems for evolution equations have garnered significant interest in recent years, with the quintessential ones being the time optimal control problems. Research on time optimal control problems for ordinary differential equations dates back to the 1950s (see, e.g., \cite{bellman1956bang}). Subsequently, in the 1960s, studies were extended to infinite dimensional cases (see, e.g., \cite{fattorini1964time,balakrishnan1965optimal}). In \cite{fattorini1964time,fattorini2005infinite}, the author derived the Pontryagin Maximum Principle for minimal time controls via the Pontryagin Maximum Principle for minimal norm controls. Furthermore, the relationship between time optimal control and norm optimal control can effectively aid in solving other  problems. For example, in \cite{carja1984minimal,carj1993minimal,gozzi1999regularity}, the authors investigated the regularity of Bellman functions associated with minimal time control problems. In \cite{tucsnak2016perturbations,yu2014approximation}, the authors explored the behavior of optimal time and optimal control when a controlled system experiences slight perturbations. In \cite{wang2013equivalence}, the authors discovered several iterative algorithms for solving time optimal control problems. Inspired by \cite{fattorini1964time,fattorini2005infinite}, the authors discussed the equivalence between the norm optimal  control and time optimal  control for internally controllable deterministic heat equations in \cite{Wang2012}. Based on this connection, they established the bang-bang property of time optimal  control with a constant control bound in \cite{Wang2015}. The bang-bang property of such a problem says, in plain language, that any optimal control reaches the boundary of the corresponding control constraint set at almost every time. This property not only is mathematically interesting, but also
has important applications. In \cite{fattorini2005infinite,wang2008null}, the authors derived the uniqueness of the optimal control from the bang-bang property. In \cite{Yang2019}, the authors discussed the bang-bang property of time optimal  control with time-varying control bounds for internally null controllable deterministic heat equations, utilizing the equivalence between norm optimal  control and time optimal  control, which is an extension work of \cite{Wang2015}. For other optimal control problems related to deterministic equations, see \cite{fattorini2005infinite,fattorini1964time,loheac2013maximum,Fattorini2011,micu2012time,phung2013observability,zuazua2006controllability,tucsnak2009observation,Yang2018}.

In practical applications, stochastic processes are often employed to model random effects, supplanting deterministic functions as the mathematical descriptions. As a result, stochastic control problems, including controllability, optimal control, and others, have drawn substantial attention from scholars. In \cite{Yang2016}, the authors provided the observability inequality of backward stochastic heat equations for measurable sets and obtained the null controllability of forward heat equations as an immediate application. In \cite{Yang2017}, the authors studied the approximate controllability for the stochastic heat equation over measurable sets and the optimal actuator location of minimum norm controls. In \cite{yan2021time}, the author investigated the time optimal control for a class of non-instantaneous impulsive Clarke subdifferential type stochastic evolution inclusions in Hilbert spaces and obtained the existence of time optimal control governed by stochastic control systems. In \cite{durga2021stochastic}, the authors discussed the time optimal control problems for time-fractional
Ginzburg–Landau equation with mixed fractional
Brownian motion. In \cite{wang2016exact}, the authors  established several sufficient conditions for a class of exact controllability problems and proved the solvability of norm optimal control problem for linear controlled forward stochastic differential equations with random coefficients. In \cite{wang2015norm}, the authors are concerned with two norm optimal control problems for different stochastic linear control systems. One is for approximately controllable systems with the natural filtration, while the other is for exactly controllable systems with a general filtration. Further, they construct the unique norm optimal control, through building up some suitable quadratic functional and making use of a variational characterization on its minimizer. For other control problems related to stochastic  equations, see \cite{Tang2009,Liu2014,Lu2011,yong1999stochastic,aoki1961stochastic,proppe1977time} and references therein.

Thus far, there has been no research on the equivalence between the  norm optimal control and time optimal  control and the bang-bang property of time optimal  control for stochastic parabolic equations. This paper presents the first attempt to address these optimal control problems for stochastic systems. Compared to the deterministic case, stochastic terms arise when studying the properties of the norm optimal  control problem, which complicates further study of the time optimal  control problem. To overcome this difficulty, we adopt relevant techniques from \cite{lv1} to establish the equivalence between norm optimal  control and time optimal  control and to obtain the bang-bang property of time optimal  control. On the other hand, it is important to recognize that we cannot employ the time change technique and treat the backward and forward equations in the same manner as in the deterministic case. This is because the stochastic system requires adaptedness, which cannot be disregarded in calculations. The adaptedness of stochastic processes has emerged as a crucial hindrance in exploring the equivalence of time optimal control and norm optimal control problems. Notably, the optimal control in the deterministic case can be constructed in the space $L^\infty(0,T;U)$ to show such an equivalence. However, for stochastic equations, the issue has yet to be resolved with control in the similar space. As a result, this paper aims to delve into stochastic equations with control in the space $L^2_{\mathbb{F}}(0,T;L^2(\Omega;U))$.

Before we state the problems and main results, let us introduce necessary notations.

Let $D$ be an open and bounded domain with smooth boundary in
$\R^l$, $l\in \N^+$. $\R^+:=(0,+\infty)$ and $G$ is nonempty and open subset of $D$. Denote by $T_n\uparrow T$ for $T_n \rightarrow T$ as $n\rightarrow \infty$ and $T_n \leq T_{n+1}$ for all $n \in \N$, and $T_n\downarrow T$ for $T_n \rightarrow T$ as $n\rightarrow \infty$ and $T_n \geq T_{n+1}$ for all $n \in \N$.

Let $(\Omega,\mathcal{F},\left\{\mathcal{F}_t\right\}_{t\geq 0}, \bP)$ is a fixed complete filtered probability space, on which a one dimensional standard Brownian motion $\{W(t)\}_{t\geq0}$ is defined, and $\left\{\mathcal{F}_t\right\}_{t\geq 0}$ is the corresponding natural filtration, augmented by all the $\bP$-null sets in $\mathcal{F}$. We denote by $\mathbb{F}$ the progressive $\sigma$-field w.r.t. $\left\{\mathcal{F}_t\right\}_{t\geq 0}$.

Given a Hilbert space $H$, Fix $t \ge 0, p \in [1,\infty)$, we denote by $L^p_{\mathcal{F}_t}(\Omega;H)$  the Banach space consisting of all $H$-valued,  $\mathcal{F}_t$ measurable random
variables $X(t)$ endowed with the norm
$$
\|X(t)\|_{L^p_{\mathcal{F}_t}(\Omega;H)}=\bigg(\mathbb{E}\|X(t)\|^p_{H}\bigg)^{\frac{1}{p}}.
$$
Denote by $L^p_{\mathbb{F}}(0,T;L^q(\Omega;H))$, $p,q\in[1,\infty)$, the Banach space consisting of all $H$-valued, $\left\{\mathcal{F}_t\right\}_{t\geq 0}$-adapted processes $X$ endowed with the norm
$$
\|X(\cdot)\|_{L^p_{\mathbb{F}}(0,T;L^q(\Omega;H))}=\bigg(\int_0^T(\mathbb{E}\|X(t)\|^q_H)^{\frac{p}{q}}dt\bigg)^{\frac{1}{p}}.
$$
Denote by $L^p_{\mathbb{F}}(\Omega; L^q(0,T;H))$, $p,q\in[1,\infty)$, the Banach space consisting of all $H$-valued, $\left\{\mathcal{F}_t\right\}_{t\geq 0}$-adapted processes $X$ endowed with the norm
$$
\|X(\cdot)\|_{L^p_{\mathbb{F}}(\Omega; L^q(0,T;H))}=\bigg[\mathbb{E}\bigg(\int_0^T\|X(t)\|^q_H dt\bigg)^{\frac{p}{q}}\bigg]^{\frac{1}{p}}.
$$
Denote by $L^\infty_{\mathbb{F}}(0,T;\R)$, the Banach space consisting of all $\R$-valued, $\left\{\mathcal{F}_t\right\}_{t\geq 0}$-adapted bounded processes, with the essential supremum norm.\\
Denote by $L^q_{\mathbb{F}}(\Omega;C([0,T];H))$, $q\in[1,\infty)$, the Banach space consisting of all $H$-valued, $\left\{\mathcal{F}_t\right\}_{t\geq 0}$-adapted continuous processes $X$ endowed with the norm
$$
\|X(\cdot)\|_{L^q_{\mathbb{F}}(\Omega;C([0,T];H))}=\bigg(\mathbb{E}\|X(\cdot)\|^q_{C([0;T];H)}\bigg)^{\frac{1}{q}}.
$$

Throughout this paper, we denote by $\langle\cdot,\cdot\rangle$ the scalar product in $L^2(D)$ and denote
by $\|\cdot\|$ the norm induced by $\langle\cdot,\cdot\rangle$. We denote by $|\cdot|$ the
Lebesgue measure on $L^2(D)$. In the sequel, we shall simply denote $L^p_{\mathbb{F}}(0,T;L^p(\Omega;H))\equiv L^p_{\mathbb{F}}(\Omega; L^p(0,T;H))$ by $L^p_{\mathbb{F}}(0,T;H)$ with $p\in[1,\infty)$.

The rest of the paper is organized as follows. In section 2, the main problem is formulated and the main results Theorem \ref{th-equivalence-TM-N} and Theorem \ref{th1} are stated. In section 3, some auxiliary results to be used later are presented. In section 4, we presents some properties on norm optimal control problem ${\bf(NP)}_{y_0}^T$ (see (\ref{problem:N})) which is to be formulated later. These are beneficial to the proofs of the main results in section 5.

\section{Problem formulation and main results}

The system, that we consider in this paper, is described by the following controlled stochastic heat
equation:
\begin{equation}
\label{model2}
\left\{
\begin{array}{ll}
dy=\triangle ydt+\chi_Gudt+aydW(t), \quad\ &\mathrm{in} \quad\ D\times\R^+,\\[3mm]
y=0, \quad\ &\mathrm{on} \quad\ \partial D\times\R^+,\\[3mm]
y(0)=y_0,\quad\ &\mathrm{in} \quad\  D,
\end{array}
\right.
\end{equation}
where  $y_0 \in L^2_{\mathcal{F}_0}(\Omega;L^2(D))$ is the initial state, $a\in L^\infty_{\F}(0,T;\R)$, $y$ is an $L^2(D)$-valued state variable, $u \in L_{\F}^2(\R^+; L^2(D))$. It is well known (see, \cite{lv1}) that for any $y_0\in L^2_{\mathcal{F}_0}(\Omega;L^2(D))$, the system (\ref{model2}) admits a unique solution $y$ in the space of $L^2_{\mathbb{F}}(\Omega;C([0,T];L^2(D))) \times L^2_\mathbb{F}(0,T;H_0^1(D))$. Moreover, there is a positive constant $C$ such that
\begin{equation}
\label{ineq-y}
\|y\|_{L^2_{\mathbb{F}}(\Omega;C([0,T];L^2(D))) \cap L^2_\mathbb{F}(0,T;H_0^1(D))}
\leq C\big(\|y_0\|_{L^2_{\mathcal{F}_0}(\Omega;L^2(D))}+\|u\|_{L_{\F}^2(0,T;L^2(G))}\big).
\end{equation}

Throughout the paper, we assume that $y_0\neq 0$ and we denote by $y(\cdot; y_0,u)$ the solution of system (\ref{model2}).

Given $N>0, y_{0}\in L^2_{\mathcal{F}_0}(\Omega;L^2(D))$, we consider following time optimal control problem ${\bf(TP)}_{y_0}^N$:
\begin{equation}
\label{problem:TN}
T (N, y_0) := \inf_{u \in \cU_N}\left\{t \in \R^+: y(t; y_0,u) = 0\right\},
\end{equation}
where $y(\cdot; y_0, u)$ is the solution of system (\ref{model2}) and
$$
\cU_N:=\left\{ u\in L_{\F}^2(\R^+;L^2(D)): \|u\|_{L^2_{\cF_t}(\Omega;L^2(D))}\leq N,\,\, t\in \R^+ \, a.e. \right\}.
$$

In problem ${\bf(TP)}_{y_0}^N$, a tetrad $(0,y_0, t, u)$ is called admissible, if $t \in \R^+$,\,$u \in \cU_N$, and $y(t ; y_0, u)=0$ ; A tetrad $(0,y_0,T (N, y_0),u^*)$ is called optimal, if $T (N, y_0)\in \R^+,\,u^* \in \cU_N \,\,\mathrm{and}\,\,y(T (N, y_0);y_0,u^*)=0;$ When $(0,y_0,T (N, y_0),u^*)$ is an optimal tetrad, $T (N, y_0)$ and $u^*$ are called the optimal time and a time optimal  control, respectively.

\begin{definition}
Time optimal  control problem ${\bf(TP)}_{y_0}^N$ has the bang-bang property if any time optimal  control $u^*$ verifies  $\|\chi_Gu^*\|_{L_{\F}^2(0,T (N, y_0); L^2(D))}=N$ and\\ $\|\chi_Gu^*(t)\|_{L^2_{\cF_t}(\Omega;L^2(D))} \neq0$ for $t \in(0,T (N, y_0))$ a.e.

\end{definition}

In order to derive the bang-bang property of problem and ${\bf(TP)}_{y_0}^N$, we consider a norm optimal control problem ${\bf(NP)}_{y_0}^T$ of the following:
\begin{equation}
\label{problem:N}
N (T, y_0) :=
\inf
\left\{
\|u\|_{L^2_{\F}(0,T;L^2(D))}:
y(T; y_0,u) = 0,\,\, \bP\text{ - a.s.}
\right\},
\end{equation}
where $y(\cdot; y_0, u)$ is the solution of the following system:
\begin{equation}
\label{model3}
\left\{
\begin{array}{ll}
dy=\triangle ydt+\chi_Gudt+aydW(t), \quad\ &\mathrm{in} \quad\ D\times(0,T),\\[3mm]
y=0, \quad\ &\mathrm{on} \quad\ \partial D\times(0,T),\\[3mm]
y(0)=y_0,\quad\ &\mathrm{in} \quad\  D.
\end{array}
\right.
\end{equation}

In problem ${\bf(NP)}_{y_0}^T$, a control $u \in L^2_{\F}(0,T;L^2(D))$ is called admissible, if $y(T ;  y_0, u) \\= 0$ and a control $u_* \in L^2_{\F}(0,T;L^2(D))$ is called optimal, if it is admissible and $\|u_*\|_{L_{\F}^2(0,T;L^2(D))}= N(T, y_0)$, which is the optimal norm.

\begin{definition}
Norm optimal  control problem ${\bf(NP)}_{y_0}^T$ has the bang-bang property if any norm optimal  control $u_*$ verifies that $\|\chi_Gu_*\|_{L_{\F}^2(0,T; L^2(D))}=N(T,y_0)$ and $\|\chi_Gu_*(t)\|_{L^2_{\cF_t}(\Omega;L^2(D))}\neq0$ for $t \in(0,T )$ a.e.
\end{definition}

Another goal of this paper, motivated by \cite{Wang2012,Wang2018}, is to discuss the equivalence between the time optimal control problem ${\bf(TP)}_{y_0}^N$ and the norm optimal control problem ${\bf(NP)}_{y_0}^T$.
\begin{definition}
\label{definition-equivalence-TM-N}
Let $N> 0$ and $T\in\R^+$, the time optimal control problem ${\bf(TP)}_{y_0}^N$ and the norm optimal control problem ${\bf(NP)}_{y_0}^T$ are
said to be equivalent if the following three conditions hold:
\begin{itemize}
  \item [$(a)$] Both problem ${\bf(TP)}_{y_0}^N$ and problem ${\bf(NP)}_{y_0}^T$ have optimal controls;
  \item [$(b)$] The restriction of each optimal control to problem ${\bf(TP)}_{y_0}^N$ over $(0, T )$ is an optimal control to problem ${\bf(NP)}_{y_0}^T$;
  \item [$(c)$] The zero extension of each optimal control to problem ${\bf(NP)}_{y_0}^T$ over $\R^+$ is an optimal control to problem ${\bf(TP)}_{y_0}^N$.
\end{itemize}

\end{definition}

Denote
\begin{equation}
\label{equivalence}
\Lambda:=\bigg\{ \big(N(T,y_0),T\big)\bigg|T\in \R^+\bigg\}.
\end{equation}

The main results of this paper are stated as follows:\\

\begin{theorem}
\label{th-equivalence-TM-N}
 When $(N, T ) \in \Lambda$ , problems ${\bf(TP)}_{y_0}^N$ and problems ${\bf(NP)}_{y_0}^T$ are equivalent.
\end{theorem}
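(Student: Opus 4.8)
The plan is to prove the three conditions $(a)$, $(b)$, $(c)$ of Definition~\ref{definition-equivalence-TM-N} in turn, using the hypothesis $(N,T)\in\Lambda$, i.e. $N=N(T,y_0)$ for this particular $T$. Condition $(a)$ splits into two existence claims. For problem ${\bf(NP)}_{y_0}^T$, existence of an optimal control is the standard direct-method argument: take a minimizing sequence $\{u_k\}$ in $L^2_\F(0,T;L^2(D))$ with $\|u_k\|_{L^2_\F(0,T;L^2(D))}\to N(T,y_0)$ and $y(T;y_0,u_k)=0$; the sequence is bounded, so (after passing to a subsequence) $u_k\rightharpoonup u_*$ weakly; the solution map $u\mapsto y(\cdot;y_0,u)$ is affine and continuous by the estimate~(\ref{ineq-y}), hence weakly continuous, so $y(T;y_0,u_*)=0$, and weak lower semicontinuity of the norm gives $\|u_*\|\le N(T,y_0)$, so $u_*$ is optimal. (I would expect this to be done in the auxiliary Section~4 on ${\bf(NP)}_{y_0}^T$, so I may simply cite it.) For problem ${\bf(TP)}_{y_0}^N$: since $u_*$ above lies in $\cU_N$ when zero-extended (because $N=N(T,y_0)$ forces $\|\chi_G u_*(t)\|_{L^2_{\cF_t}}\le N$ a.e.\ — this needs the bang-bang-type pointwise bound, which I will address below), the tetrad $(0,y_0,T,u_*)$ is admissible for ${\bf(TP)}_{y_0}^N$, so $T(N,y_0)\le T<\infty$; then a minimizing sequence of admissible times/controls, together with the same weak-compactness and weak-continuity argument applied on a fixed large interval, produces a time optimal control $u^\sharp$ with optimal time $T(N,y_0)$.

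Next I would establish the crucial numerical identity $T(N,y_0)=T$ when $(N,T)\in\Lambda$. The inequality $T(N,y_0)\le T$ was just shown. For the reverse, suppose for contradiction $T(N,y_0)<T$. Let $u^\sharp$ be a time optimal control on $(0,T(N,y_0))$, so $y(T(N,y_0);y_0,u^\sharp)=0$ and $\|u^\sharp(t)\|_{L^2_{\cF_t}(\Omega;L^2(D))}\le N$ a.e.; its zero extension $\tilde u$ to $(0,T)$ still satisfies $y(T;y_0,\tilde u)=0$ (the state stays at $0$ once it reaches $0$ with no control — here I must be slightly careful because of the multiplicative noise $a\,y\,dW$, but $y\equiv 0$ solves the uncontrolled equation from a zero state, so by uniqueness this is fine) and
\[
\|\tilde u\|_{L^2_\F(0,T;L^2(D))}^2=\int_0^{T(N,y_0)}\E\|u^\sharp(t)\|^2\,dt\le N^2\,T(N,y_0)<N^2 T,
\]
which would only give $N(T,y_0)\le N\sqrt{T(N,y_0)/T}$, not yet a contradiction with $N=N(T,y_0)$. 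So the strict-decrease argument must instead be run through the function $T\mapsto N(T,y_0)$: I would invoke the strict monotonicity of $N(\cdot,y_0)$ — which I expect to be one of the "properties of ${\bf(NP)}_{y_0}^T$" proved in Section~4 — namely $N(T_2,y_0)<N(T_1,y_0)$ for $T_1<T_2$. Given that, if $T(N,y_0)=:T'<T$ then the optimal-norm control for ${\bf(NP)}_{y_0}^{T'}$ has norm $N(T',y_0)>N(T,y_0)=N$, contradicting that $\|\chi_G u^\sharp\|_{L^2_\F(0,T';L^2(D))}\le N$ while being admissible for ${\bf(NP)}_{y_0}^{T'}$; hence $T(N,y_0)=T$.

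With $T(N,y_0)=T$ in hand, conditions $(b)$ and $(c)$ follow cleanly. For $(c)$: let $u_*$ be any optimal control for ${\bf(NP)}_{y_0}^T$, so $\|u_*\|_{L^2_\F(0,T;L^2(D))}=N(T,y_0)=N$ and $y(T;y_0,u_*)=0$. To see its zero extension lies in $\cU_N$ I need the pointwise bound $\|\chi_G u_*(t)\|_{L^2_{\cF_t}(\Omega;L^2(D))}\le N$ for a.e.\ $t$; this is exactly where the bang-bang property of ${\bf(NP)}_{y_0}^T$ (Theorem~\ref{th1}, stated just after this one) enters, or more precisely a "$\esssup = N$ on an interval up to $T$" statement — I would use the characterization of the norm optimal control from Section~4 (via the adjoint/dual backward stochastic heat equation), which typically yields $u_*(t)=-\chi_G z(t)/\|\cdot\|$ scaled so that $\|\chi_G u_*(t)\|_{L^2_{\cF_t}}$ is constant in $t$, hence equal to $\|u_*\|/\sqrt T = N/\sqrt T \le N$; combined with the time-optimality of $T$ this gives admissibility, and since the extension reaches $0$ at time $T=T(N,y_0)$ it is time optimal. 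For $(b)$: let $u^*$ be any time optimal control for ${\bf(TP)}_{y_0}^N$; its restriction to $(0,T)=(0,T(N,y_0))$ satisfies $y(T;y_0,u^*)=0$ and $\|u^*\|_{L^2_\F(0,T;L^2(D))}\le N\sqrt T$... hmm, this only gives an upper bound, so for $(b)$ I would argue by contradiction: if $\|u^*|_{(0,T)}\|_{L^2_\F(0,T;L^2(D))}>N(T,y_0)$ is impossible by definition of $N(T,y_0)$ as an infimum over admissible controls (and $u^*|_{(0,T)}$ is admissible for ${\bf(NP)}_{y_0}^T$), and it cannot be strictly smaller because then a variational/optimality argument (scaling $u^*$ down slightly and using the strict monotonicity of $N(\cdot,y_0)$ together with continuity) would let us reach $0$ strictly before $T$, contradicting $T(N,y_0)=T$; hence equality, so $u^*|_{(0,T)}$ is optimal for ${\bf(NP)}_{y_0}^T$.

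The main obstacle is the pointwise-in-$t$ control bound: bridging the "integrated" constraint $\|u_*\|_{L^2_\F(0,T;L^2(D))}=N$ that defines norm optimality and the "pointwise a.e." constraint $\|u_*(t)\|_{L^2_{\cF_t}(\Omega;L^2(D))}\le N$ that defines $\cU_N$. In the deterministic case one passes freely between $L^\infty(0,T;U)$ and $L^2$ via time change, but as the introduction stresses, adaptedness blocks this, so the argument must instead lean on the explicit characterization of the norm optimal control through the dual backward stochastic heat equation (Section~4), which forces $\|u_*(t)\|_{L^2_{\cF_t}}$ to be essentially constant in $t$ — this is the technical heart, and everything else is the direct method plus monotonicity of $N(\cdot,y_0)$.
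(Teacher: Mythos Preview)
Your overall skeleton matches the paper's: reduce everything to the numerical identity $T(N,y_0)=T$ (equivalently $N=N(T,y_0)\Leftrightarrow T=T(N,y_0)$), obtained from strict monotonicity of $T\mapsto N(T,y_0)$, and then read off $(a)$, $(b)$, $(c)$. The paper isolates this bi-implication as a separate Proposition (Proposition~\ref{proposition:N0-T0}) and then the proof of Theorem~\ref{th-equivalence-TM-N} is three short paragraphs citing Propositions~\ref{proposition:N-exist}, \ref{proposition:TN-exist}, \ref{proposition:N0-T0}.

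Where you diverge is in what you flag as ``the main obstacle'': the pointwise-in-$t$ bound $\|u_*(t)\|_{L^2_{\cF_t}(\Omega;L^2(D))}\le N$. The paper simply does not engage with this. Throughout the existence proof for $(TP)$ (Proposition~\ref{proposition:TN-exist}) and the equivalence proof itself, the paper works only with the integrated bound $\|u\|_{L^2_\F(\R^+;L^2(D))}\le N$; e.g.\ in step $(c)$ it writes $\|u_*\|_{L^2_\F(0,T;L^2(D))}=N(T,y_0)=N$ and immediately concludes $\tilde u_*$ is optimal for $(TP)$. So despite how $\cU_N$ is typeset, the paper treats the constraint as an $L^2$-in-time ball, and your elaborate detour through bang-bang and the explicit characterization $u_*=\chi_G\varphi^*$ is not needed to reproduce the paper's argument. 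Note also that your proposed fix would not actually work: from $u_*=\chi_G\varphi^*$ there is no reason for $t\mapsto\E\|\chi_G\varphi^*(t)\|^2$ to be constant --- the paper's bang-bang statement only gives $\ne 0$, not constant --- so if a genuine pointwise bound were required, your resolution has a gap.

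Finally, your argument for $(b)$ is more complicated than necessary. Once $T=T(N,y_0)$, any time optimal $u^*$ satisfies $y(T;y_0,u^*)=0$ and $\|u^*\|_{L^2_\F(0,T;L^2(D))}\le N=N(T,y_0)$ (the paper uses the $L^2$ reading of $\cU_N$ here), so $u^*|_{(0,T)}$ is admissible for ${\bf(NP)}_{y_0}^T$ with norm at most the optimal value --- hence equal to it. No contradiction or scaling argument is needed.
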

\begin{remark}
In Chapter 5 of \cite{Wang2018}, the authors demonstrated the equivalence and inequivalence of time optimal control and norm optimal control problems for deterministic evolution equations with control $u\in L^\infty(0,T;L^2(D))$. However, in the stochastic equations, similar results have yet to be obtained with control $u\in L^\infty_{\mathbb{F}}(0,T;L^2(\Omega;L^2(D)))$, as the adaptedness of the stochastic process must be taken into account with the similar methods. As a result, we only obtain the equivalence of time optimal control and norm optimal control with $u\in L_{\F}^2(0,T;L^2(D))$.
\end{remark}

\begin{theorem}
\label{th1}
Let $y_0 \in L_{\cF_0}^2(\Omega;L^2(D))\setminus \left\{ 0 \right\}$. Then, the time optimal control problem ${\bf(TP)}_{y_0}^N$ admits a solution, i.e., there exists $u^*\in L_{\F}^2(0,T(N,y_0);L^2(D))$ such that $y(T (N, y_0); y_0, u^*) = 0$, and the time optimal  control $u^*$ satisfies the bang-bang property.
\end{theorem}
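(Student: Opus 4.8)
The plan is to deduce Theorem \ref{th1} from Theorem \ref{th-equivalence-TM-N} together with the properties of the norm optimal control problem ${\bf(NP)}_{y_0}^T$ developed in Section 4. First I would fix $N>0$ and set $T:=T(N,y_0)$, the optimal time for ${\bf(TP)}_{y_0}^N$; the preliminary step is to check that this quantity is well defined, i.e. that $\cU_N$ contains at least one control steering $y_0$ to $0$ in finite time. This should follow from the null controllability of the forward stochastic heat equation (the observability inequality for the backward adjoint equation, as in \cite{Yang2016,lv1}): for any fixed horizon $\tau>0$ there is a control $u$ with $y(\tau;y_0,u)=0$ and $\|u\|_{L^2_\F(0,\tau;L^2(D))}\le C(\tau)\|y_0\|_{L^2_{\cF_0}(\Omega;L^2(D))}$, and by letting $\tau$ be large (so that $C(\tau)$ is small, using the decay coming from the heat semigroup) one gets such a $u$ lying in $\cU_N$. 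Hence $T(N,y_0)<\infty$, and by definition $N(T,y_0)\le N$; I would then argue $(N(T,y_0),T)$ can be related to a point of $\Lambda$ so that Theorem \ref{th-equivalence-TM-N} applies — more precisely, I expect that at the optimal time one has exactly $N(T,y_0)=N$, since otherwise a short-time local controllability/perturbation argument would let one reach $0$ strictly before $T$ with a control still admissible for $\cU_N$, contradicting optimality of $T$.

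Granting $(N,T)\in\Lambda$, Theorem \ref{th-equivalence-TM-N} gives at once that ${\bf(TP)}_{y_0}^N$ has an optimal control $u^*\in L^2_\F(0,T(N,y_0);L^2(D))$, proving the existence part, and moreover that the restriction of $u^*$ to $(0,T)$ is an optimal control for ${\bf(NP)}_{y_0}^T$. So the bang-bang property for ${\bf(TP)}_{y_0}^N$ reduces to the bang-bang property for ${\bf(NP)}_{y_0}^T$: I must show any norm optimal control $u_*$ satisfies $\|\chi_G u_*\|_{L^2_\F(0,T;L^2(D))}=N(T,y_0)$ and $\|\chi_G u_*(t)\|_{L^2_{\cF_t}(\Omega;L^2(D))}\neq 0$ for a.e.\ $t\in(0,T)$. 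The first identity is essentially the definition of optimality together with the fact (to be taken from Section 4) that the optimal control is supported in $G$, i.e.\ $u_*=\chi_G u_*$; the substance is the second, non-vanishing, claim.

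For the non-vanishing part I would use the variational characterization of the norm optimal control established in Section 4 (the analogue of the construction in \cite{wang2015norm,lv1}): $u_*$ is given in feedback form through the solution $(z,Z)$ of the backward stochastic adjoint equation with suitable terminal data $z_T$, namely $\chi_G u_*(t)=-\chi_G z(t)$ (up to a normalizing constant), where $z$ solves
\begin{equation}
\label{eq:adjoint-sketch}
\left\{
\begin{array}{ll}
dz=-\triangle z\,dt - a Z\,dt + Z\,dW(t), &\mathrm{in}\ D\times(0,T),\\[2mm]
z=0, &\mathrm{on}\ \partial D\times(0,T),\\[2mm]
z(T)=z_T, &\mathrm{in}\ D.
\end{array}
\right.
\end{equation}
Then $\|\chi_G u_*(t)\|_{L^2_{\cF_t}(\Omega;L^2(D))}=0$ on a set $E\subset(0,T)$ of positive measure would force $\chi_G z\equiv 0$ on $E\times D$, $\bP$-a.s. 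The main obstacle — and the place where the stochastic setting genuinely differs from \cite{Wang2015,Yang2019} — is to upgrade this to $z\equiv 0$, which requires a \emph{unique continuation / backward uniqueness} property for the backward stochastic heat equation \eqref{eq:adjoint-sketch}: vanishing of $z$ on $G$ over a time set of positive measure implies $z\equiv 0$ everywhere, hence $z_T=0$ and $u_*\equiv 0$, contradicting $y_0\neq 0$ (since $u_*\equiv0$ cannot steer a nonzero initial state to zero for the heat dynamics). This unique continuation statement is exactly the kind of result I would expect to be quoted from Section 3 (built on a global Carleman estimate for stochastic parabolic operators, cf.\ \cite{Yang2016,lv1}); adapting it carefully so that adaptedness of $(z,Z)$ is respected — one cannot freely reverse time — is the crux, and the remaining bookkeeping (measurability, the passage between $u^*$ and its restriction, the constant bound $N$) is routine. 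This completes the deduction of Theorem \ref{th1}.
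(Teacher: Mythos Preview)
Your proposal is essentially correct and follows the same route as the paper: reduce the time-optimal bang-bang to the norm-optimal bang-bang via the equivalence, then use the variational representation $u_*=\chi_G\varphi^*$ together with a unique continuation/observability property of the backward adjoint equation to rule out vanishing. The paper packages this as Proposition~\ref{proposition:TN-exist} (existence) plus Theorem~\ref{theorem-TN-BB}, the latter invoking Theorem~\ref{theorem-N-BB} and Proposition~\ref{proposition:N0-T0}.

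Two places where you are vaguer than the paper and should tighten the argument. First, your ``short-time local controllability/perturbation'' step to force $N(T(N,y_0),y_0)=N$ cannot be run as a quick-steer on $[T-\varepsilon,T]$ (the cost of steering to zero on a short interval blows up, cf.\ Proposition~\ref{proposition:N-monotonicity}(c)); the paper instead proves that $T\mapsto N(T,y_0)$ is strictly decreasing and continuous with range all of $\R^+$ (Propositions~\ref{proposition:N-monotonicity} and~\ref{proposition:N-continuity}) and then gets $N_0=N(T_0,y_0)\Leftrightarrow T_0=T(N_0,y_0)$ (Proposition~\ref{proposition:N0-T0}), which is the clean way to place $(N,T)$ in $\Lambda$. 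Second, for the non-vanishing of $\chi_G u_*$ the paper does not invoke a measurable-set unique continuation directly but uses Remark~\ref{remark:not zero}: since $y_0\neq 0$ the minimizer $\varphi^*\in Y\setminus\{0\}$ (Remark~\ref{remark:zero}), and the observability inequality~(\ref{observe ine2}) applied on subintervals forces $\|\chi_G\varphi^*(t)\|_{L^2_{\cF_t}(\Omega;L^2(D))}\neq 0$ for every $t\in[0,T)$. Your Carleman-based unique continuation would also work, but the observability route is what is actually available in Section~3 and avoids having to re-derive a positive-measure-set statement.
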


\section{Auxiliary conclusions}
In this section, we give some auxiliary results that will be used later. Let us introduce the following backward stochastic heat equation:
\begin{equation}
\label{BSDE}
\left\{
\begin{array}{ll}
dz=-\triangle zdt-aZdt+ZdW(t), \quad\ &\mathrm{in} \quad\ D\times(0,T),\\[3mm]
z=0, \quad\ &\mathrm{on} \quad\ \partial D\times(0,T),\\[3mm]
z(T)=\eta,\quad\ &\mathrm{in} \quad\  D.
\end{array}
\right.
\end{equation}
It is well known (see, (\cite{lv1}, Theorem 4.10.)) that for any $\eta\in L^2_{\mathcal{F}_T}(\Omega;L^2(D))$, the system (\ref{BSDE}) admits a unique solution $(z,Z)$ in the space of $L^2_{\mathbb{F}}(\Omega;C([0,T];L^2(D)))\\ \times L^2_\mathbb{F}(0,T;L^2(D))$.
\begin{lemma}
\label{lemma:observe ine}
The system (\ref{BSDE}) is exactly observable, i.e., there exists a constant $C=C(T,D, G) > 0$, such that for all $\eta\in L_{\cF_T}^2(\Omega;L^2(D))$,
\begin{equation}
\label{observe ine}
\begin{array}{ll}
\|z(0)\|^2_{L_{\cF_0}^2(\Omega;L^2(D))}\leq C \|\chi_Gz\|^2_{L_{\F}^2(0,T;L^2(D))}.
\end{array}
\end{equation}
Here and in what follows, we denote by $C$ a constant although it may have different values in different contexts.

\end{lemma}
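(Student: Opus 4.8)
The plan is to prove the observability inequality (\ref{observe ine}) by coupling a global Carleman estimate for the backward stochastic heat operator in (\ref{BSDE}) with a backward-in-time energy estimate for its solution. First I would fix a nonempty open set $G_0$ with $\overline{G_0}\subset G$ and a function $\psi\in C^2(\overline D)$ with $\psi>0$ in $D$, $\psi=0$ on $\partial D$ and $|\nabla\psi|>0$ on $\overline{D\setminus G_0}$, and introduce the standard weights $\varphi(t,x)=e^{\mu\psi(x)}\big/\big(t(T-t)\big)$, $\alpha(t,x)=\big(e^{\mu\psi(x)}-e^{2\mu\|\psi\|_{C(\overline D)}}\big)\big/\big(t(T-t)\big)$ and $\theta=e^{\lambda\alpha}$. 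Applying the global Carleman estimate for backward stochastic parabolic equations from \cite{lv1} to the pair $(z,Z)$ solving (\ref{BSDE}) (whose drift inhomogeneity is $-aZ$), and using $a\in L^\infty_{\F}(0,T;\R)$ together with $\varphi\geq 4T^{-2}$ to absorb the terms involving the correction process $Z$, I would obtain constants $\mu_0$, $\lambda_0$, $C>0$ such that for all $\lambda\geq\lambda_0$
$$
\E\int_0^T\!\!\int_D\theta^2\big(\lambda\varphi|\nabla z|^2+\lambda^3\varphi^3|z|^2\big)\,dx\,dt\;\leq\;C\lambda^3\,\E\int_0^T\!\!\int_{G_0}\theta^2\varphi^3|z|^2\,dx\,dt .
$$

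Since the weight $\theta^2\varphi^3$ degenerates as $t\downarrow0$, this estimate alone says nothing about $\|z(0)\|$, so I would next derive an energy estimate propagating information from an interior time interval back to $t=0$. Applying It\^o's formula to $t\mapsto\|z(t)\|^2$ and using $\langle z,-\triangle z\rangle=\|\nabla z\|^2$ under the homogeneous Dirichlet condition gives
$$
\frac{d}{dt}\,\E\|z(t)\|^2=\E\Big(2\|\nabla z(t)\|^2-2a(t)\langle z(t),Z(t)\rangle+\|Z(t)\|^2\Big),
$$
and completing the square, $-2a\langle z,Z\rangle+\|Z\|^2=\|Z-az\|^2-a^2\|z\|^2\geq-\|a\|^2_{L^\infty_{\F}(0,T;\R)}\|z\|^2$, yields $\frac{d}{dt}\E\|z(t)\|^2\geq-\|a\|^2_{L^\infty_{\F}(0,T;\R)}\E\|z(t)\|^2$. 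Integrating this differential inequality backward from any $t\in[T/4,3T/4]$ to $0$, and then averaging over $t\in[T/4,3T/4]$, gives
$$
\|z(0)\|^2_{L^2_{\cF_0}(\Omega;L^2(D))}\;\leq\;\frac{2}{T}\,e^{\|a\|^2_{L^\infty_{\F}(0,T;\R)}T}\,\E\int_{T/4}^{3T/4}\|z(t)\|^2\,dt .
$$

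To conclude, I would fix $\lambda=\lambda_0$ and use that $\theta^2\varphi^3$ is bounded below by a positive constant on $[T/4,3T/4]\times D$ and bounded above by a constant on $(0,T)\times D$ (since $\theta\to0$ as $t\downarrow0$ or $t\uparrow T$); chaining the last display with the Carleman estimate and dropping the gradient term then gives $\|z(0)\|^2_{L^2_{\cF_0}(\Omega;L^2(D))}\leq C\,\E\int_0^T\!\!\int_{G_0}|z|^2\,dx\,dt$, and since $G_0\subset G$ this is exactly (\ref{observe ine}) with $C=C(T,D,G)$. I expect the main obstacle to be the first step: unlike the deterministic heat equation, where a time reversal reduces observability to the forward case, the backward equation (\ref{BSDE}) carries the correction process $Z$, so the pointwise weighted identity leaves weighted $\|Z\|^2$-terms on the right-hand side, and absorbing them — relying on the precise structure of the backward Carleman weight and on $a\in L^\infty_{\F}(0,T;\R)$ — is the delicate part, which is why the argument rests on the stochastic Carleman machinery of \cite{lv1}.
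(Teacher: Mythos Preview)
Your argument is correct in outline: the combination of a global Carleman estimate for the backward stochastic parabolic operator with a backward energy bound on $[T/4,3T/4]$ is exactly the standard route to (\ref{observe ine}), and your treatment of the $Z$-terms (absorbing $\E\int\!\!\int\theta^2|aZ|^2$ into the weighted $Z$-term on the left for $\lambda$ large, using boundedness of $a$) is the right idea. The It\^o computation and the Gronwall step are also fine.

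However, the paper does not prove this lemma at all in the sense you do: it simply invokes inequality~(1.2) of \cite{Yang2016}, where the observability estimate for backward stochastic heat equations (even over measurable observation sets) is already established, and reads off (\ref{observe ine}) as a special case. So your proposal is a genuinely different, self-contained derivation, essentially reproducing the Carleman-plus-energy argument that underlies \cite{Yang2016} (and \cite{Tang2009}), whereas the paper treats the result as known. What your route buys is independence from the cited reference and an explicit indication of how the constant depends on $T$, $D$, $G$ and $\|a\|_{L^\infty_{\F}}$; what the paper's route buys is brevity, since the full Carleman machinery for stochastic parabolic equations is lengthy and already available in the literature.
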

\begin{proof}
From observability inequality (1.2) in \cite{Yang2016}, it implies desired estimate (\ref{observe ine}).

\end{proof}

\begin{remark}
\label{remark1}
The null controllability for system (\ref{model2}) or (\ref{model3}) is equivalent to the observability inequality (\ref{observe ine}) for system (\ref{BSDE}); see, for instance \cite{lv1,Yang2016}.

\end{remark}

\begin{lemma}
\label{lemma:null controllable}
The system (\ref{model2}) or (\ref{model3}) is null controllable at time $T$. That is, for any $y_0\in L^2_{\mathcal{F}_0}(\Omega;L^2(D))$, there exists a control $u \in L^2_{\mathbb{F}}(0,T;L^2(D))$ such that $y(T;y_0,u)=0$, in $D$, $\bP$-a.s.. Moreover, the control $u$ satisfies the following estimate
$$
\|u\|^2_{L^2_{\mathbb{F}}(0,T;L^2(D))}\leq C\|y_0\|^2_{L^2_{\mathcal{F}_0}(\Omega;L^2(D))}.
$$

\end{lemma}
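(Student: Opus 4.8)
The plan is to deduce null controllability of the forward system from the observability inequality for the backward adjoint system (Lemma~\ref{lemma:observe ine}) via a duality/penalization argument, which is the standard route in the stochastic setting (cf.\ \cite{lv1,Yang2016}). Concretely, fix $y_0 \in L^2_{\mathcal{F}_0}(\Omega;L^2(D))$ and, for each $\e>0$, introduce the penalized functional
\begin{equation*}
J_\e(\eta) := \frac{1}{2}\|\chi_G z\|^2_{L^2_{\F}(0,T;L^2(D))} + \e\|\eta\|_{L^2_{\mathcal{F}_T}(\Omega;L^2(D))} + \E\langle y_0, z(0)\rangle,
\end{equation*}
where $(z,Z)$ solves the backward equation \eqref{BSDE} with terminal datum $\eta \in L^2_{\mathcal{F}_T}(\Omega;L^2(D))$. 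First I would check that $J_\e$ is continuous, strictly convex, and coercive on $L^2_{\mathcal{F}_T}(\Omega;L^2(D))$; coercivity is exactly where the observability inequality \eqref{observe ine} enters, since it bounds $\|z(0)\|$ — and hence the troublesome linear term $\E\langle y_0,z(0)\rangle$ — by $\|\chi_G z\|_{L^2_\F}$, so that $J_\e(\eta) \ge \frac14\|\chi_G z\|^2_{L^2_\F} + \e\|\eta\| - C\|y_0\|^2$ for $\|\eta\|$ large. Hence $J_\e$ attains a unique minimizer $\eta_\e$.

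Next I would write the Euler--Lagrange equation for $\eta_\e$: using the duality relation between \eqref{model3} and \eqref{BSDE} (Itô's formula applied to $\langle y, z\rangle$, which yields $\E\langle y(T;y_0,u),\eta\rangle - \E\langle y_0,z(0)\rangle = \E\int_0^T \langle \chi_G u, z\rangle\,dt$), the first-order condition at $\eta_\e$ shows that the control $u_\e := -\chi_G z_\e$ on $G$ drives $y(\cdot;y_0,u_\e)$ to a terminal state $y_\e(T)$ with $\|y_\e(T)\|_{L^2_{\mathcal{F}_T}(\Omega;L^2(D))} \le \e$. Moreover, evaluating $J_\e$ at the minimizer against $\eta_\e$ and using $J_\e(\eta_\e)\le J_\e(0)=0$ gives the uniform bound $\|u_\e\|^2_{L^2_\F(0,T;L^2(D))} = \|\chi_G z_\e\|^2_{L^2_\F} \le C\|y_0\|^2_{L^2_{\mathcal{F}_0}(\Omega;L^2(D))}$, with $C$ depending only on the observability constant.

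Finally, since $\{u_\e\}$ is bounded in the Hilbert space $L^2_\F(0,T;L^2(D))$, I would extract a weakly convergent subsequence $u_\e \rightharpoonup u$; by the continuous dependence estimate \eqref{ineq-y} the solution map $u\mapsto y(T;y_0,u)$ is (weakly) continuous into $L^2_{\mathcal{F}_T}(\Omega;L^2(D))$, so $y(T;y_0,u)=\lim y_\e(T)=0$, $\bP$-a.s., and weak lower semicontinuity of the norm preserves the bound $\|u\|^2 \le C\|y_0\|^2$. The main obstacle — and the only place requiring genuine care beyond the deterministic template — is maintaining $\mathbb{F}$-adaptedness throughout: the adjoint state $z_\e$ must be the solution of the \emph{backward} stochastic equation (hence adapted, by \cite{lv1}, Theorem~4.10), so that $u_\e=-\chi_G z_\e$ is automatically in $L^2_\F(0,T;L^2(D))$, and the duality identity must be justified with the correct Itô correction terms coming from the $aydW$ and $ZdW$ noise. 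Once adaptedness is respected, the argument is the standard Fenchel–Rockafellar duality between observability and controllability.
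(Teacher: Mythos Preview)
Your approach is correct and is precisely the standard route. The paper itself does not supply a proof of this lemma: it is stated immediately after Remark~\ref{remark1}, which asserts the equivalence of null controllability for \eqref{model3} and the observability inequality \eqref{observe ine} for \eqref{BSDE}, with the details deferred to \cite{lv1,Yang2016}. Your penalization argument is exactly how that equivalence is made constructive in those references, and in fact the paper later reproduces essentially the same variational mechanism (the functional $\cJ$ in \eqref{functional:J}, Lemma~\ref{lemma:J}, and the construction $u_* = \chi_G\varphi^*$ in Proposition~\ref{proposition:N-exist}) to build the minimal-norm control directly, bypassing the $\e$-penalization by working in the completed space $Y$.

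Two minor points. First, with $J_\e$ as you wrote it (linear term $+\E\langle y_0,z(0)\rangle$), the correct choice of control is $u_\e = +\chi_G z_\e$, not $-\chi_G z_\e$: plugging the Euler--Lagrange relation into the duality identity $\E\langle y(T),\eta\rangle = \E\langle y_0,z(0)\rangle + \E\int_0^T\langle u,\chi_G z\rangle\,dt$ only yields $\|y_\e(T)\|\le \e$ with the positive sign (compare \eqref{eq:u star}). Second, when you extract a weak limit $u_\e\rightharpoonup u$ in $L^2_\F(0,T;L^2(D))$ you implicitly use that this space is a closed (hence weakly closed) subspace of $L^2(\Omega\times(0,T);L^2(D))$, so the limit remains progressively measurable; since you single out adaptedness as the main obstacle, this step deserves to be stated explicitly.
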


Here and what follows, we simply set $z(\cdot;\eta) = z(\cdot;T,\eta)$ for the solution of the adjoint equation \eqref{BSDE} with the
terminal condition $z(T) = \eta$.

Next, for any $T \in \R^+$, set
$
X=\left\{z(\cdot;\eta):\eta\in L^2_{\mathcal{F}_T}(\Omega;L^2(D))  \right\}.
$
Define $\|\cdot\|_{X}: X \to \R$ by
\begin{equation}
  \label{norm:X}
\|z\|_X = \| z\|_{L^2_\mathbb{F}(0,T;L^2(D))} = \left( \int_0^T \E \|\chi_G z\|^2_{L^2(D)} dt\right)^{1/2}.
\end{equation}
It follows from the observability inequality \eqref{observe ine} that $\|\cdot\|_X$ is indeed a norm on space $X$. We denote by $Y$ the completion of the space $X$ under the norm $\|\cdot\|_X$. The following proposition provides us a description of $Y$.

\begin{lemma}
  \label{lemma:Y}
  Under an isomorphism, any element of\, $Y$ can be expressed
  as a process $\varphi(\cdot;T,\eta) \in L^2_{\mathbb{F}}(\Omega;C([0,T];L^2(D)))$, which
  satisfies
\begin{equation}
  \label{BSDE2}
  \left\{
\begin{array}{ll}
  d\varphi=-\triangle \varphi dt-aZdt+ZdW(t), \quad\ &\mathrm{in} \quad\ D\times(0,T),\\[3mm]
\varphi=0, \quad\ &\mathrm{on} \quad\ \partial D\times(0,T),\\[3mm]
\varphi(T)=\eta,\quad\ &\mathrm{in} \quad\  D,
\end{array}
\right.
\end{equation}
 for some $Z\in L^2_{\mathbb{F}}(0,T;L^2(D))$, $\bP$-a.s.
 Moreover, $\chi_G\varphi(\cdot;T,\eta) =
\lim_{n\to\infty}\chi_G z(\cdot;\eta_n)$
for some sequence
$\{\eta_n\}\subseteq L^2_{\mathcal{F}_T}(\Omega;L^2(D))$.
\end{lemma}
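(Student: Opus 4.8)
The plan is to construct the isomorphism explicitly using the observability inequality and a density argument, then identify the limiting object as a solution of the backward stochastic heat equation \eqref{BSDE2}. First I would note that $X$ is a linear space and the map $\eta \mapsto z(\cdot;\eta)$ is linear; by the observability inequality \eqref{observe ine} together with the well-posedness estimate for \eqref{BSDE}, the $X$-norm is comparable to control of $z$ on all of $D$, so $\|\cdot\|_X$ separates points of $X$. Since $Y$ is by definition the abstract completion of $(X,\|\cdot\|_X)$, every element of $Y$ is represented by an $\|\cdot\|_X$-Cauchy sequence $\{z(\cdot;\eta_n)\}$ with $\{\eta_n\}\subseteq L^2_{\mathcal{F}_T}(\Omega;L^2(D))$. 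The point is to show this sequence (or rather the full processes, not just their restrictions to $G$) actually converges in a concrete function space and that the limit solves \eqref{BSDE2}.

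The key step is a uniform a priori estimate: I would show that $\|\eta_n - \eta_m\|_{L^2_{\mathcal{F}_T}(\Omega;L^2(D))}$, and hence $\|z(\cdot;\eta_n) - z(\cdot;\eta_m)\|_{L^2_{\mathbb{F}}(\Omega;C([0,T];L^2(D)))}$ and $\|Z_n - Z_m\|_{L^2_{\mathbb{F}}(0,T;L^2(D))}$, are controlled by $\|z(\cdot;\eta_n) - z(\cdot;\eta_m)\|_X$. This is exactly where Lemma \ref{lemma:observe ine} does the work: applying \eqref{observe ine} to $\eta = \eta_n - \eta_m$ gives
\begin{equation*}
\|z(0;\eta_n) - z(0;\eta_m)\|^2_{L^2_{\mathcal{F}_0}(\Omega;L^2(D))} \leq C\|z(\cdot;\eta_n) - z(\cdot;\eta_m)\|^2_X,
\end{equation*}
but I actually need the terminal data, not the initial data, to be Cauchy. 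For this I would combine observability at the initial time with the standard backward well-posedness estimate run backward: since \eqref{BSDE} is well-posed both ways (terminal data to solution, and — via the energy estimate — solution bounds controlling the data in terms of interior observation), one gets that $\{\eta_n\}$ is Cauchy in $L^2_{\mathcal{F}_T}(\Omega;L^2(D))$. Then by the well-posedness of \eqref{BSDE} stated just before Lemma \ref{lemma:observe ine}, the pairs $(z(\cdot;\eta_n), Z_n)$ converge in $L^2_{\mathbb{F}}(\Omega;C([0,T];L^2(D))) \times L^2_{\mathbb{F}}(0,T;L^2(D))$ to some pair $(\varphi, Z)$, and passing to the limit in the (mild or variational) formulation of \eqref{BSDE} shows $(\varphi, Z)$ solves \eqref{BSDE2} with $\varphi(T) = \eta := \lim \eta_n$. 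The isomorphism $Y \to \{\varphi(\cdot;T,\eta)\}$ sends the equivalence class of $\{z(\cdot;\eta_n)\}$ to this $\varphi$; it is well-defined (independent of the representative sequence) and isometric onto its image by construction of the $X$-norm, and $\chi_G\varphi = \lim_n \chi_G z(\cdot;\eta_n)$ in $L^2_{\mathbb{F}}(0,T;L^2(D))$ is then immediate.

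I expect the main obstacle to be the direction of the estimate: the observability inequality \eqref{observe ine} controls $z(0)$ by the interior observation of $z$, whereas the natural parametrization of $X$ is by the terminal datum $\eta = z(T)$. Bridging these requires noting that, for the backward heat equation, the map $z(0) \mapsto z(T)$ (equivalently, the forward evolution) is bounded, or alternatively reworking everything with $z(0)$ as the parameter and invoking that the forward controllability problem determines $z(T)$ continuously from $z(0)$. A secondary subtlety is that the completion $Y$ a priori only "sees" the restrictions $\chi_G z$, so one must be careful that the reconstructed $\varphi$ on all of $D$ is genuinely determined — this is precisely what the uniqueness/observability argument guarantees, since two solutions of \eqref{BSDE2} with the same $\chi_G$-trace have terminal data differing by something annihilated by \eqref{observe ine}, hence equal. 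Once these points are handled, the remaining verification that the limit equation holds is routine passage to the limit in a linear SPDE.
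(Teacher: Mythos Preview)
There is a genuine gap. Your argument hinges on showing that the terminal data $\{\eta_n\}$ are Cauchy in $L^2_{\mathcal{F}_T}(\Omega;L^2(D))$, and you propose to obtain this from the observability inequality \eqref{observe ine} (which controls $z(0)$) combined with boundedness of the map $z(0)\mapsto z(T)$. But that map is \emph{not} bounded: the equation \eqref{BSDE} reads $dz=-\Delta z\,dt+\cdots$, so evolving forward in time from $z(0)$ to $z(T)=\eta$ is the anti-diffusive direction for the heat operator and is ill-posed. Well-posedness of \eqref{BSDE} gives only the reverse bound $\|z(0)\|\le C\|\eta\|$. Indeed, if $\{\eta_n\}$ were always Cauchy in $L^2_{\mathcal{F}_T}$, the $X$-norm would be equivalent to the $L^2_{\mathcal{F}_T}$-norm on terminal data, $X$ would already be complete, and the passage to the completion $Y$ would be vacuous. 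That $Y$ genuinely enlarges $X$ is exactly the content of Remark~\ref{remark:Y}: elements of $Y$ need not arise from terminal data in $L^2_{\mathcal{F}_T}$, and $\varphi$ itself need not lie in $L^2_{\mathbb{F}}(0,T;L^2(D))$.

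The paper omits the proof and refers to \cite{Yang2016}; the argument there proceeds not by controlling $\eta_n$ but by working locally away from $t=T$. For each $\delta\in(0,T)$ one applies an observability inequality on $(T-\delta,T)$ to deduce that $\{z(T-\delta;\eta_n)\}$ is Cauchy in $L^2_{\mathcal{F}_{T-\delta}}(\Omega;L^2(D))$, and then the well-posedness of \eqref{BSDE} on $[0,T-\delta]$ with terminal datum $z(T-\delta;\eta_n)$ yields that $\{(z(\cdot;\eta_n),Z_n)\}$ is Cauchy in $L^2_{\mathbb{F}}(\Omega;C([0,T-\delta];L^2(D)))\times L^2_{\mathbb{F}}(0,T-\delta;L^2(D))$. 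The limit $(\varphi,Z)$ solves the equation on $(0,T)$, and $\chi_G\varphi=\lim_n\chi_G z(\cdot;\eta_n)$ in $L^2_{\mathbb{F}}(0,T;L^2(D))$ by construction; the terminal value $\eta$ in \eqref{BSDE2} is to be understood formally rather than as an $L^2_{\mathcal{F}_T}$-limit of the $\eta_n$. Your uniqueness remark at the end (two solutions with the same $\chi_G$-trace must coincide) is correct and is what makes the isomorphism well defined, but it does not rescue the Cauchy claim for the $\eta_n$.
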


The proof is similar to the proof of Lemma 3.4 in \cite{Yang2016}. Therefore, we omit the details.

\begin{remark}
  \label{remark:Y}
  The element $\varphi$ in $Y$ is not necessarily in the space
  of $L^2_{\F}(0,T;L^2(D))$, but $\chi_G\varphi(\cdot;T,\eta)\in
L^2_{\F}(0,T;L^2(D))$. Also, because of the isomorphism, we write \begin{equation}
\label{eq:Y-norm}
\|\varphi\|_Y=\|\chi_G \varphi(\cdot;T,\eta)\|_{L^2_{\F}(0,T;L^2(D))}= \left( \int_0^T \E \|\chi_G\varphi(t;T,\eta)\|^2_{L^2(D)} dt\right)^\frac{1}{2}.
\end{equation}
\end{remark}

\begin{remark}
  \label{remark:not zero}
when $\xi\in Y\setminus \{0\}$, it holds that $\|\chi_G\xi\|_{L^2_{\cF_t}(\Omega;L^2(D))}\neq0$ for each $t \in[0, T )$.
Indeed, from Lemma \ref{lemma:Y}, there is a function $\varphi \in L^2_\mathbb{F}(\Omega;C([0,T];L^2(D)))$ with $\chi_G \varphi\in L^2_\mathbb{F}(0,T;L^2(D))$ such that $\xi=\chi_G \varphi$. This follows that $\varphi\neq0$. By (\ref{observe ine2}) in Lemma \ref{lemma:observe ine2} (it will be given later), it follows that $\chi_G \varphi\neq0$ in $L^2_{\cF_t}(\Omega;L^2(D))$ for each $t \in [0, T )$.

\end{remark}

In a word, from Lemma \ref{lemma:Y} and Remark \ref{remark:Y}, the space $Y$ can be described as follows:
\begin{equation}
\label{space-Y}
Y=
\left\{
\begin{array}{lll}
&\text{Solving the system (\ref{BSDE2}) such that}~    \chi_G \varphi\in L^2_{\F}(0,T;L^2(D)).\\[3mm]
\varphi:&\text{Moreover, for any}~\eta_n\in L^2_{\cF_T}(\Omega;L^2(D)), \\[3mm]
&\chi_G \varphi = \lim_{n\to \infty} \chi_G z(\cdot;\eta_n)
\end{array}
\right \},
\end{equation}
endowed with the norms (\ref{eq:Y-norm}).

Similar to the Lemma \ref{lemma:observe ine}, we also have the following observability inequality for system (\ref{BSDE2}):
\begin{lemma}
\label{lemma:observe ine2}
The system (\ref{BSDE2}) is exactly observable, i.e., there exists a constant $C=C(T,D, G) > 0$, such that for all $\eta\in L_{\cF_T}^2(\Omega;L^2(D))$,
\begin{equation}
\label{observe ine2}
\begin{array}{ll}
\|\varphi(0;T,\eta)\|^2_{L_{\cF_0}^2(\Omega;L^2(D))}\leq C \|\chi_G\varphi(\cdot;T,\eta)\|^2_{L_{\F}^2(0,T;L^2(D))}.
\end{array}
\end{equation}

\end{lemma}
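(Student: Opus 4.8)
The plan is to deduce the observability inequality \eqref{observe ine2} for system \eqref{BSDE2} from the one already established for \eqref{BSDE} in Lemma \ref{lemma:observe ine}, together with the limiting description of $Y$ provided by Lemma \ref{lemma:Y}. The key point is that every $\varphi(\cdot;T,\eta)\in Y$ is, by construction, a limit (in the norm $\|\cdot\|_X$) of genuine solutions $z(\cdot;\eta_n)$ of \eqref{BSDE} with $\eta_n\in L^2_{\cF_T}(\Omega;L^2(D))$, and the inequality \eqref{observe ine} passes to the limit.

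First I would fix $\eta\in L^2_{\cF_T}(\Omega;L^2(D))$ and let $\varphi=\varphi(\cdot;T,\eta)$ be the associated element of $Y$ as in Lemma \ref{lemma:Y}. By that lemma there is a sequence $\{\eta_n\}\subseteq L^2_{\cF_T}(\Omega;L^2(D))$ with $\chi_G z(\cdot;\eta_n)\to\chi_G\varphi(\cdot;T,\eta)$ in $L^2_{\F}(0,T;L^2(D))$, i.e. $\|z(\cdot;\eta_n)-\varphi\|_X\to 0$; in particular $\{z(\cdot;\eta_n)\}$ is Cauchy in $X$. Next I would apply Lemma \ref{lemma:observe ine} to differences: for all $m,n$,
\[
\|z(0;\eta_n)-z(0;\eta_m)\|^2_{L^2_{\cF_0}(\Omega;L^2(D))}\le C\,\|\chi_G z(\cdot;\eta_n)-\chi_G z(\cdot;\eta_m)\|^2_{L^2_{\F}(0,T;L^2(D))},
\]
using linearity of \eqref{BSDE} in the terminal datum so that $z(\cdot;\eta_n)-z(\cdot;\eta_m)=z(\cdot;\eta_n-\eta_m)$. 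Hence $\{z(0;\eta_n)\}$ is Cauchy in $L^2_{\cF_0}(\Omega;L^2(D))$ and converges to some limit; since the first component of the solution to \eqref{BSDE} depends continuously on the data in $L^2_{\mathbb{F}}(\Omega;C([0,T];L^2(D)))$ and $\varphi$ is the corresponding limiting process, that limit is exactly $\varphi(0;T,\eta)$. Then passing to the limit $n\to\infty$ in the inequality $\|z(0;\eta_n)\|^2_{L^2_{\cF_0}(\Omega;L^2(D))}\le C\,\|\chi_G z(\cdot;\eta_n)\|^2_{L^2_{\F}(0,T;L^2(D))}$ yields \eqref{observe ine2}, with the same constant $C=C(T,D,G)$.

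The only mildly delicate step is the identification of the limit of $z(0;\eta_n)$ with $\varphi(0;T,\eta)$: one needs that convergence of $\chi_G z(\cdot;\eta_n)$ in $L^2_\F(0,T;L^2(D))$ forces convergence of the full solutions $z(\cdot;\eta_n)$ in $L^2_\mathbb{F}(\Omega;C([0,T];L^2(D)))$, which is precisely where the observability inequality \eqref{observe ine} (applied to differences) is used, followed by the well-posedness estimate for \eqref{BSDE}; once the sequence of solutions converges in $C([0,T];L^2(D))$ $\bP$-a.s.\ along a subsequence, the evaluation at $t=0$ is continuous and the identification is immediate from the construction of $Y$ in Lemma \ref{lemma:Y}. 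Since Lemma \ref{lemma:Y} is quoted rather than proved here, I would simply invoke it for this identification, so the proof is genuinely short — essentially ``approximate, apply \eqref{observe ine} to differences, pass to the limit.'' I expect no real obstacle beyond bookkeeping.
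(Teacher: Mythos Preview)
Your argument is correct, but it is more elaborate than what the paper does. The paper gives no proof at all; it merely prefaces the lemma with ``Similar to Lemma~\ref{lemma:observe ine}'', and the reason is that for $\eta\in L^2_{\cF_T}(\Omega;L^2(D))$ systems \eqref{BSDE} and \eqref{BSDE2} are the \emph{same} backward equation with the \emph{same} terminal datum, so $\varphi(\cdot;T,\eta)=z(\cdot;\eta)$ and \eqref{observe ine2} is \eqref{observe ine} verbatim---no approximation is required. Your route, by contrast, treats $\varphi$ as a generic element of the completion $Y$ and passes the inequality to the limit along an approximating sequence $z(\cdot;\eta_n)$. This buys you something the paper leaves implicit: the inequality \eqref{observe ine2} for \emph{all} $\varphi\in Y$, not just those coming from $\eta\in L^2_{\cF_T}$. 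The paper in fact uses the inequality in that generality later (Remark~\ref{remark:not zero}, the coercivity step in Lemma~\ref{lemma:J}) without spelling out the extension, so your limiting argument is precisely the missing justification there. In short: for the lemma as literally stated your proof is an unnecessary detour, but it proves the version that is actually needed downstream.
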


Next,  introduce the attainable subspace
$
A=\bigg\{y(T;0,u):u\in L^2_{\F}(0,T;L^2(D))\bigg\},
$
endowed with the norms
\begin{equation}
\label{eq:A-norm}
\|y_T\|_A:=\inf\bigg\{ \|u\|_{L^2_{\F}(0,T;L^2(D))}: y(T;0,u)=y_T\bigg\},\, y_T\in A.
\end{equation}
We have the following result about the space $A$, which will be used later in this paper.
\begin{lemma}
\label{lemma:A}
Let $T>0$, there is a linear operator $g$ from $A$ to $L^2_{\F}(0,T;L^2(D))$ such that $g$ preserves the norms.
\end{lemma}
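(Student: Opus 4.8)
The plan is to realize the norm $\|\cdot\|_A$ as a genuine Hilbert-space norm coming from a quotient, and then build the norm-preserving operator $g$ directly from the control-to-state map. First I would introduce the bounded linear ``control-to-final-state'' operator
\[
\mathcal{L}: L^2_{\F}(0,T;L^2(D)) \to L^2_{\mathcal{F}_T}(\Omega;L^2(D)), \qquad \mathcal{L}u = y(T;0,u),
\]
whose boundedness is exactly the estimate \eqref{ineq-y}. By definition $A = \mathrm{Ran}(\mathcal{L})$, and the norm \eqref{eq:A-norm} is nothing but the quotient norm induced by $\mathcal{L}$ on $L^2_{\F}(0,T;L^2(D))/\ker\mathcal{L}$. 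Since $\ker\mathcal{L}$ is a closed subspace of the Hilbert space $L^2_{\F}(0,T;L^2(D))$, the quotient is again a Hilbert space, and the canonical map identifies $A$ isometrically with $(\ker\mathcal{L})^{\perp}$; in particular $(A,\|\cdot\|_A)$ is a Hilbert space and for every $y_T\in A$ the infimum in \eqref{eq:A-norm} is attained at a unique minimal-norm control $u_{y_T}\in (\ker\mathcal{L})^\perp$.

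The operator $g$ is then defined by $g(y_T) := u_{y_T}$, i.e. $g$ sends each attainable state to its unique minimal-norm control. I would check the three required properties in turn. Linearity: the map $y_T \mapsto u_{y_T}$ is the inverse of the restriction $\mathcal{L}|_{(\ker\mathcal{L})^\perp}:(\ker\mathcal{L})^\perp \to A$, which is a linear bijection, so its inverse is linear; equivalently, $u_{\alpha y_T + \beta \tilde y_T} = \alpha u_{y_T}+\beta u_{\tilde y_T}$ because the right-hand side lies in $(\ker\mathcal{L})^\perp$ and is mapped by $\mathcal{L}$ to $\alpha y_T + \beta \tilde y_T$, and minimal-norm preimages in $(\ker\mathcal{L})^\perp$ are unique. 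Norm preservation: by construction $\|g(y_T)\|_{L^2_{\F}(0,T;L^2(D))} = \|u_{y_T}\|_{L^2_{\F}(0,T;L^2(D))} = \inf\{\|u\|: \mathcal{L}u = y_T\} = \|y_T\|_A$. Finally one should note that $g$ indeed maps into $L^2_{\F}(0,T;L^2(D))$, which is immediate since each $u_{y_T}$ is by definition an admissible control in that space.

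The one point that needs a little care — and which I expect to be the main (though mild) obstacle — is justifying that the infimum in \eqref{eq:A-norm} is \emph{attained}, and attained \emph{uniquely}, so that $g$ is well defined as a single-valued map rather than merely a multivalued one. This is where the Hilbert-space structure is essential: the set $\{u : \mathcal{L}u = y_T\}$ is a nonempty (by Lemma \ref{lemma:null controllable}, or simply by $y_T\in A$) closed affine subspace of $L^2_{\F}(0,T;L^2(D))$, hence a nonempty closed convex set, and by the projection theorem it contains a unique element of minimal norm, namely its orthogonal projection onto the origin, which is the unique point of $\{u:\mathcal{L}u=y_T\}\cap(\ker\mathcal{L})^\perp$. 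Closedness of $\{u:\mathcal{L}u=y_T\}$ follows from continuity of $\mathcal{L}$. With this in hand the construction above goes through verbatim, and $g$ is the desired norm-preserving linear operator from $A$ to $L^2_{\F}(0,T;L^2(D))$.
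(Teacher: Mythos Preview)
Your proof is correct, but it follows a genuinely different route from the paper's. You exploit the Hilbert-space structure directly: the control-to-state map $\mathcal{L}$ is bounded, $\ker\mathcal{L}$ is closed, and orthogonal projection onto $(\ker\mathcal{L})^\perp$ furnishes the unique minimal-norm control $u_{y_T}$; setting $g(y_T)=u_{y_T}$ gives the isometry at once. This is clean, self-contained, and needs nothing beyond the projection theorem.

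The paper instead proceeds by duality with the adjoint system \eqref{BSDE2}. For each $y_T\in A$ it defines a linear functional $\cL_{y_T}$ on the range of $\chi_G\varphi(\cdot;T,\eta)$ by
\[
\cL_{y_T}\bigl(\chi_G\varphi(\cdot;T,\eta)\bigr)=\E\int_0^T\langle \hat u(t),\chi_G\varphi(t;T,\eta)\rangle\,dt,
\]
checks via It\^o's formula \eqref{eq:itoA} that this is independent of the choice of $\hat u$ with $y(T;0,\hat u)=y_T$, and then identifies $\cL_{y_T}$ with an element of $L^2_{\F}(0,T;L^2(D))$ through the Riesz-type representation theorem for adapted processes. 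The norm equality is argued in two directions using the pairing with $\chi_G\varphi$.

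What each approach buys: yours is shorter and avoids the adjoint equation entirely. The paper's construction, however, makes explicit the link between $g(y_T)$ and the observability pairing $\E\langle y_T,\eta\rangle$, and the intermediate identity \eqref{ineq:g-preserve-version} is reused verbatim in the proof of the characterization Lemma~\ref{lemma:N-characteristic} for $N(T,y_0)$. So while your argument suffices for the lemma as stated, it does not directly supply the dual formula that the paper leans on in the next section; you would have to rederive that connection separately.
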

\begin{proof}
For any $\eta\in L_{\cF_T}^2(\Omega;L^2(D))$ and $v\in L^2_{\F}(0,T;L^2(D))$, by system (\ref{model3}), system (\ref{BSDE2}) and It\^{o}'s formula, we have
\begin{equation}
\label{eq:itoA}
\E \left\langle y(T;0,v), \eta\right\rangle =\E\int_0^T \left\langle v(t), \chi_G \varphi(t;T,\eta)\right\rangle dt.
\end{equation}
Let $y_T \in A$. Then $y_T = y(T ; 0,\hat{u})$ for some $\hat{u} \in L^2_{\F}(0,T;L^2(D))$. Define $\cL_{y_T}:L_{\F}^2(0,T;L^2(D))\rightarrow \R$ by setting
\begin{equation}
\label{eq:L-yT}
\cL_{y_T}(\chi_G \varphi(\cdot;T,\eta))=\E\int_0^T \left\langle \hat{u}(t), \chi_G\varphi(t;T,\eta)\right\rangle dt,~
\text{for each}~ \eta\in L_{\cF_T}^2(\Omega;L^2(D)).
\end{equation}
From (\ref{eq:itoA}) and (\ref{eq:L-yT}), one can easily check that $\cL_{y_T}$ is well-defined and linear. Meanwhile, on one hand, using the H\"{o}lder inequality to the right side of equality (\ref{eq:L-yT}), we see that $\cL_{y_T}$ is bounded. On the other hand, since the dual of the space $L_{\F}^2(0,T;L^2(D))$ equals itself, see Theorem 2.73 in \cite{lv1}, $\cL_{y_T}$ is a linear bounded functional on $L_{\F}^2(0,T;L^2(D))$. i.e.,
\begin{equation}
\label{eq:L-yT-bound}
\cL_{y_T}\in L_{\F}^2(0,T;L^2(D)).
\end{equation}
Now defined $g:A \rightarrow L_{\F}^2(0,T;L^2(D))$ by setting
\begin{equation}
\label{eq:g}
g(y_T)=\cL_{y_T}\,\, \text{for each}~ y_T\in A.
\end{equation}
It is clear that $g$ is linear.

We now prove that $g$ preserves the norms. i.e.,
\begin{equation}
\label{eq:g-preserve}
\|g(y_T)\|_{L_{\F}^2(0,T;L^2(D))}=\|y_T\|_A\,\, \text{for each}~ y_T\in A.
\end{equation}
Let $y_T\in A$. Arbitrarily take a $\hat{u} \in L_{\F}^2(0,T;L^2(D))$ such that $y_T = y(T ; 0, \hat{u})$.\\
From equality (\ref{eq:L-yT}) and (\ref{eq:L-yT-bound}), it follows that
\begin{equation}
\label{lemma:A-eq1}
\left\langle \cL_{y_T},\xi \right\rangle_{L_{\F}^2(0,T;L^2(D))}=\E \int_0^T\left\langle \hat{u}(t),\xi(t) \right\rangle dt    \,\, \text{for each}~ \xi\in L_{\F}^2(0,T;L^2(D)),
\end{equation}
which implies
$$
\|\cL_{y_T}\|_{L_{\F}^2(0,T;L^2(D))}\leq \|\hat{u}\|_{L_{\F}^2(0,T;L^2(D))}.
$$
Then together with (\ref{eq:A-norm}), it leads to
\begin{equation}
\label{ineq:g-preserve-one side}
\|\cL_{y_T}\|_{L_{\F}^2(0,T;L^2(D))}\leq \|y_T\|_A.
\end{equation}
Conversely, for any $\hat{v}\in L_{\F}^2(0,T;L^2(D))$, from (\ref{lemma:A-eq1}), we know
\begin{equation}
\label{lemma:A-ineq1}
\bigg|\E \int_0^T\left\langle \hat{v}(t),\xi(t) \right\rangle dt\bigg|\leq \|\cL_{y_T}\|_{L_{\F}^2(0,T;L^2(D))}\|\xi\|_{L_{\F}^2(0,T;L^2(D))}.
\end{equation}
Defined $h: L_{\F}^2(0,T;L^2(D)) \rightarrow \R$ by setting
\begin{equation}
\label{eq:h}
h(\xi)=\E \int_0^T\left\langle \hat{v}(t),\xi(t) \right\rangle dt\,\, \text{for each}~ \xi \in L_{\F}^2(0,T;L^2(D)).
\end{equation}
By (\ref{lemma:A-ineq1}) and (\ref{eq:h}), it follows that $h$ is a linear bounded functional on $L_{\F}^2(0,T;L^2(D))$, which shows that
\begin{equation}
\label{ineq:h-bound}
\|h\|_{L_{\F}^2(0,T;L^2(D))}\leq \|\cL_{y_T}\|_{L_{\F}^2(0,T;L^2(D))}.
\end{equation}
According to the Riesz-type representation theorem for general stochastic processes (see Theorem 2.55 in \cite{lv1}), $\exists\,v\in L_{\F}^2(0,T;L^2(D))$ such that $\forall \,\xi\in L_{\F}^2(0,T;L^2(D))$, we have
$$
h(\xi)=\E \int_0^T\left\langle v(t),\xi(t) \right\rangle dt.
$$
Moreover,
\begin{equation}
\label{eq:h-riesz}
\|h\|_{L_{\F}^2(0,T;L^2(D))}= \|v\|_{L_{\F}^2(0,T;L^2(D))}.
\end{equation}
Let $y_T\in A$. We fix the $v\in L_{\F}^2(0,T;L^2(D))$ such that $y(T;0,v)=y_T$.
By (\ref{eq:A-norm}), we have
$$
\|y_T\|_A\leq \|v\|_{L_{\F}^2(0,T;L^2(D))},
$$
which combines with (\ref{ineq:h-bound}) and (\ref{eq:h-riesz}), leads to
\begin{equation}
\label{ineq:g-preserve-another side}
\|\cL_{y_T}\|_{L_{\F}^2(0,T;L^2(D))}\geq \|y_T\|_A.
\end{equation}
To sum up, from (\ref{ineq:g-preserve-one side}) and (\ref{ineq:g-preserve-another side}), we obtain
\begin{equation}
\label{ineq:g-preserve-version}
\|\cL_{y_T}\|_{L_{\F}^2(0,T;L^2(D))}= \|y_T\|_A.
\end{equation}
Now (\ref{eq:g-preserve}) follows from the definition of $g$ in (\ref{eq:g}) and (\ref{ineq:g-preserve-version}). This completes the proof.
\end{proof}

\section{The properties of the norm optimal  control problem}
In this section, we presents some properties of the norm optimal  control problem ${\bf(NP)}_{y_0}^T$. These properties will be used in the proof of our main results.

\subsection{Characterizations of the norm optimal  control problem}

\begin{lemma}
\label{lemma:N-characteristic}
For all $T>0$, $y_0\in L_{\cF_0}^2(\Omega;L^2(D))\setminus\{0\}$. The optimal norm $N(T , y_0)$ of problem ${\bf(NP)}_{y_0}^T$ can be characterized as
\begin{equation}
\label{eq:N-characteristic}
N(T , y_0)=\sup_{\eta\in L_{\cF_T}^2(\Omega;L^2(D))\setminus\{0\}} \dfrac{\E\left\langle y_0,\varphi(0;T,\eta) \right\rangle}{\|\chi_G\varphi(\cdot;T,\eta)\|_{L_{\F}^2(0,T;L^2(D))}}.
\end{equation}

\end{lemma}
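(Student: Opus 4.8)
The plan is to establish the identity \eqref{eq:N-characteristic} by the standard duality argument: bound the right-hand side supremum above by $N(T,y_0)$ using any admissible control, and then realize the supremum (or rather, exhibit that $N(T,y_0)$ does not exceed it) by appealing to the isometry $g$ from Lemma \ref{lemma:A} together with the Hahn--Banach / Riesz representation on $L^2_{\F}(0,T;L^2(D))$.

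First I would record the fundamental duality relation. For any admissible control $u$ (i.e. $y(T;y_0,u)=0$) and any $\eta\in L^2_{\cF_T}(\Omega;L^2(D))$, applying It\^o's formula to $\langle y(t;y_0,u),\varphi(t;T,\eta)\rangle$ between $t=0$ and $t=T$ — exactly as in \eqref{eq:itoA} but with nonzero initial datum — yields
\begin{equation}
\label{eq:duality-N}
\E\langle y_0,\varphi(0;T,\eta)\rangle = -\,\E\int_0^T\langle u(t),\chi_G\varphi(t;T,\eta)\rangle\,dt.
\end{equation}
(The sign and the appearance of $\chi_G$ come from the control term $\chi_G u\,dt$ in \eqref{model3} and the observation term in \eqref{BSDE2}; the $aZ$ and $ZdW$ pieces cancel against the corresponding terms, and the boundary terms vanish.) By Cauchy--Schwarz in $L^2_{\F}(0,T;L^2(D))$, the right-hand side of \eqref{eq:duality-N} is bounded by $\|u\|_{L^2_{\F}(0,T;L^2(D))}\,\|\chi_G\varphi(\cdot;T,\eta)\|_{L^2_{\F}(0,T;L^2(D))}$. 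Dividing by $\|\chi_G\varphi(\cdot;T,\eta)\|_{L^2_{\F}(0,T;L^2(D))}$ (which is nonzero for $\eta\neq 0$ by the observability inequality \eqref{observe ine}, noting that $\varphi$ agrees with some $z(\cdot;\eta)$-limit and actually with $z(\cdot;\eta)$ when $\eta\in L^2_{\cF_T}$) and taking the infimum over admissible $u$ gives
\[
\sup_{\eta\neq 0}\frac{\E\langle y_0,\varphi(0;T,\eta)\rangle}{\|\chi_G\varphi(\cdot;T,\eta)\|_{L^2_{\F}(0,T;L^2(D))}}\ \le\ N(T,y_0).
\]

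For the reverse inequality I would argue as follows. Since $-y(T;0,u) = y(T;y_0,0) - y(T;y_0,u)$ by linearity, a control $u$ is admissible for ${\bf(NP)}_{y_0}^T$ precisely when $y(T;0,u) = -y(T;y_0,0)$, i.e. when $u$ steers the zero initial state to the fixed target $y_T^\sharp := -y(T;y_0,0)\in A$. Thus $N(T,y_0) = \|y_T^\sharp\|_A$ in the notation of \eqref{eq:A-norm}, and by Lemma \ref{lemma:A} this equals $\|g(y_T^\sharp)\|_{L^2_{\F}(0,T;L^2(D))}$. Now I would unwind $\|g(y_T^\sharp)\|$ as a supremum: using the self-duality of $L^2_{\F}(0,T;L^2(D))$ (Theorem 2.73 in \cite{lv1}) and the fact that $\{\chi_G\varphi(\cdot;T,\eta):\eta\in L^2_{\cF_T}(\Omega;L^2(D))\}$ is dense in the space $Y$ which, via \eqref{eq:itoA}, pairs with $A$ through $\cL_{y_T^\sharp}$, I get
\[
\|g(y_T^\sharp)\|_{L^2_{\F}(0,T;L^2(D))} = \sup_{\eta\neq 0}\frac{|\cL_{y_T^\sharp}(\chi_G\varphi(\cdot;T,\eta))|}{\|\chi_G\varphi(\cdot;T,\eta)\|_{L^2_{\F}(0,T;L^2(D))}} = \sup_{\eta\neq 0}\frac{\big|\E\langle y_T^\sharp,\eta\rangle\big|}{\|\chi_G\varphi(\cdot;T,\eta)\|_{L^2_{\F}(0,T;L^2(D))}},
\]
where the last equality uses \eqref{eq:itoA} with $v$ chosen so that $y(T;0,v)=y_T^\sharp$. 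Finally, \eqref{eq:itoA} applied to the initial-datum decomposition gives $\E\langle y_T^\sharp,\eta\rangle = -\E\langle y(T;y_0,0),\eta\rangle = \E\langle y_0,\varphi(0;T,\eta)\rangle$ (again by It\^o's formula on $\langle y(\cdot;y_0,0),\varphi(\cdot;T,\eta)\rangle$, which has no control term), and since replacing $\eta$ by $-\eta$ flips the sign of the numerator while leaving the denominator fixed, the absolute value may be dropped. Combining the two inequalities yields \eqref{eq:N-characteristic}.

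The main obstacle I anticipate is the density/denseness bookkeeping in the middle step: one must be careful that the supremum over $\eta\in L^2_{\cF_T}(\Omega;L^2(D))$ of the ratio $|\cL_{y_T^\sharp}(\chi_G\varphi)|/\|\chi_G\varphi\|$ genuinely recovers the operator norm of $\cL_{y_T^\sharp}$ on all of $Y$ (equivalently on $L^2_{\F}(0,T;L^2(D))$ after the isometric identification), rather than only a lower bound — this is where Lemma \ref{lemma:Y} and Remark \ref{remark:Y} (density of $\chi_G z(\cdot;\eta_n)$ in $\chi_G\varphi$) do the real work, and where one has to make sure the Riesz representative produced in the proof of Lemma \ref{lemma:A} is in fact approximable by the $\chi_G\varphi$'s. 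Everything else is routine It\^o calculus and Cauchy--Schwarz.
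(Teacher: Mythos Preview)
Your proposal is correct and follows essentially the same route as the paper: identify $N(T,y_0)$ with the attainable-set norm $\|y_T^\sharp\|_A$ (where $y_T^\sharp=-y(T;y_0,0)$), invoke the norm-preserving map $g$ of Lemma~\ref{lemma:A} to convert this to $\|\cL_{y_T^\sharp}\|_{L^2_{\F}(0,T;L^2(D))}$, and express the latter as the supremum over $\eta$ via It\^o's formula. The only cosmetic difference is that the paper does not perform your preliminary Cauchy--Schwarz step (it obtains both inequalities at once from the isometry in Lemma~\ref{lemma:A}), and the paper simply writes the key step \eqref{lemma:N-characteristic-eq3} as an equality without flagging the density issue you raise---your caution there is well placed but not an obstacle, since $X$ is by construction dense in $Y$.
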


\begin{proof}
Let $T>0$, $y_0\in L_{\cF_0}^2(\Omega;L^2(D))\setminus\{0\}$. Write $y_T:= -y(T;y_0,0)$. By linearity, one can easily check that
$
y(T;y_0,u)=0\Leftrightarrow y(T;0,u)=y_T.
$
According to the Lemma \ref{lemma:null controllable}, the system (\ref{model2}) or (\ref{model3}) is null controllable.
Hence, the above expression leads to $y_T\in A$. Then from (\ref{problem:N}) and (\ref{eq:A-norm}), we have
\begin{equation}
\label{lemma:N-characteristic-eq1}
N(T , y_0)=\|y_T\|_A.
\end{equation}
Let $\hat{u} \in L_{\F}^2(0,T;L^2(D))$ be such that $y(T ; 0, \hat{u}) = y_T$. Taking $v=\hat{u}$ in (\ref{eq:itoA}) and together with (\ref{eq:L-yT}), it follows that
\begin{equation}
\label{lemma:N-characteristic-eq2}
\cL_{y_T}(\chi_G\varphi(\cdot;T,\eta))=\E\left\langle y_T,\eta \right\rangle \,\,\text{for each}~ \eta\in L_{\cF_T}^2(\Omega;L^2(D)).
\end{equation}
Since $\cL_{y_T}$ is a linear bounded functional on $L_{\F}^2(0,T;L^2(D))$, as well as from Lemma \ref{lemma:observe ine2} we have $\chi_G\varphi(t;T,\eta)\neq 0$ when $\eta\in L_{\cF_T}^2(\Omega;L^2(D))\setminus\{0\}$ and $t \in [0, T )$ a.s.. Hence, from (\ref{lemma:N-characteristic-eq2})
\begin{equation}
\label{lemma:N-characteristic-eq3}
\|\cL_{y_T}\|_{L_{\F}^2(0,T;L^2(D))}=\sup_{\eta\in L_{\cF_T}^2(\Omega;L^2(D))\setminus\{0\}} \dfrac{\E\left\langle y(T;y_0,0),\eta \right\rangle}{\|\chi_G\varphi(\cdot;T,\eta)\|_{L_{\F}^2(0,T;L^2(D))}}.
\end{equation}
Consider the following stochastic heat equation, i.e., equation (\ref{model3}) with $u=0$:
\begin{equation}
\label{model4}
\left\{
\begin{array}{ll}
dy=\triangle ydt+aydW(t), \quad\ &\mathrm{in} \quad\ D\times(0,T),\\[3mm]
y=0, \quad\ &\mathrm{on} \quad\ \partial D\times(0,T),\\[3mm]
y(0)=y_0,\quad\ &\mathrm{in} \quad\  D.
\end{array}
\right.
\end{equation}
By system (\ref{model4}), system (\ref{BSDE2}) and It\^{o}'s formula, we have
\begin{equation}
\label{eq:without u}
\E\left\langle y(T;y_0,0),\eta \right\rangle=\E\left\langle y_0,\varphi(0;T,\eta) \right\rangle.
\end{equation}
Combining the above equality with (\ref{lemma:N-characteristic-eq3}), it follows that
$$
\|\cL_{y_T}\|_{L_{\F}^2(0,T;L^2(D))}=\sup_{\eta\in L_{\cF_T}^2(\Omega;L^2(D))\setminus\{0\}} \dfrac{\E\left\langle y_0,\varphi(0;T,\eta) \right\rangle}{\|\chi_G\varphi(\cdot;T,\eta)\|_{L_{\F}^2(0,T;L^2(D))}}.
$$
This, together with (\ref{ineq:g-preserve-version}) and (\ref{lemma:N-characteristic-eq1}), the conclusion (\ref{eq:N-characteristic}) is true. The proof is completed.
\end{proof}

Here and what follows, we simply set $\varphi(\cdot;\eta) = \varphi(\cdot;T,\eta)$ for the solution of the equation \eqref{BSDE2} with the terminal condition $\varphi(T) = \eta$.

We define a functional $\cJ$ on $Y$:
\begin{equation}
\label{functional:J}
\begin{array}{lll}
\cJ(\varphi)&=&\dfrac{1}{2}\|\chi_G \varphi(\cdot;\eta)\|^2_{L_{\F}^2(0,T;L^2(D))}+\E\left\langle y_0, \varphi(0;\eta) \right\rangle\\[3mm]
&=&\dfrac{1}{2}\int_0^T \E\|\chi_G \varphi\|^2_{L^2(D)}dt+\E\left\langle y_0,\varphi(0;\eta) \right\rangle.
\end{array}
\end{equation}

Denote the variational problem ${\bf(VP)}_{y_0}^T$ as follows:
\begin{equation}
\label{problem:V}
V := \inf_{\varphi\in Y}\cJ(\varphi).
\end{equation}
Next, we prove the existence of the minimizer of $\cJ$ defined in (\ref{functional:J}).
\begin{lemma}
\label{lemma:J}
Suppose $y_0\in  L_{\cF_0}^2(\Omega;L^2(D))$, the functional $\cJ$ admits a minimizer on $Y$.
\end{lemma}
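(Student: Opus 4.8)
The plan is to show that $\cJ$ is continuous, strictly convex, and coercive on the Hilbert space $Y$, and then invoke the standard direct method: a coercive, weakly lower semicontinuous functional on a Hilbert space attains its infimum. First I would record that $Y$ is a Hilbert space (it is the completion of $X$ under the norm $\|\cdot\|_X$, which is induced by the inner product $\langle \varphi,\psi\rangle_Y = \int_0^T \E\langle \chi_G\varphi,\chi_G\psi\rangle\,dt$, so $Y$ is complete by construction). The quadratic term $\frac12\|\chi_G\varphi(\cdot;\eta)\|^2_{L^2_{\F}(0,T;L^2(D))} = \frac12\|\varphi\|_Y^2$ is exactly half the squared norm on $Y$, hence continuous and convex; the linear term $\varphi \mapsto \E\langle y_0,\varphi(0;\eta)\rangle$ needs to be shown to be a bounded linear functional on $Y$, and this is where the observability inequality enters.

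The key estimate is that $|\E\langle y_0,\varphi(0;T,\eta)\rangle| \le \|y_0\|_{L^2_{\cF_0}(\Omega;L^2(D))}\,\|\varphi(0;T,\eta)\|_{L^2_{\cF_0}(\Omega;L^2(D))} \le C\|y_0\|_{L^2_{\cF_0}(\Omega;L^2(D))}\,\|\chi_G\varphi(\cdot;T,\eta)\|_{L^2_{\F}(0,T;L^2(D))} = C\|y_0\|_{L^2_{\cF_0}(\Omega;L^2(D))}\,\|\varphi\|_Y$, using Cauchy–Schwarz and then the observability inequality \eqref{observe ine2} from Lemma \ref{lemma:observe ine2}. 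This simultaneously shows the linear term is a bounded linear functional on $Y$ (so $\cJ$ is continuous, being a sum of a continuous quadratic form and a bounded linear functional) and gives the lower bound $\cJ(\varphi) \ge \frac12\|\varphi\|_Y^2 - C\|y_0\|_{L^2_{\cF_0}(\Omega;L^2(D))}\|\varphi\|_Y$, which tends to $+\infty$ as $\|\varphi\|_Y \to \infty$; hence $\cJ$ is coercive, and in particular $V = \inf_Y \cJ > -\infty$.

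With continuity, convexity (indeed $\cJ$ is a convex continuous function on a Hilbert space, hence weakly lower semicontinuous) and coercivity in hand, I would take a minimizing sequence $\{\varphi_n\}\subseteq Y$ with $\cJ(\varphi_n)\to V$; coercivity forces $\{\varphi_n\}$ to be bounded in $Y$, so by reflexivity of the Hilbert space $Y$ a subsequence converges weakly to some $\varphi_*\in Y$, and weak lower semicontinuity gives $\cJ(\varphi_*)\le \liminf \cJ(\varphi_n) = V$, whence $\cJ(\varphi_*) = V$ and $\varphi_*$ is the desired minimizer. (Strict convexity of $\cJ$, coming from strict convexity of $\varphi\mapsto\|\varphi\|_Y^2$, additionally yields uniqueness, though that is not asked for in the statement.)

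The main obstacle — really the only nontrivial point — is justifying that the linear functional $\varphi\mapsto\E\langle y_0,\varphi(0;\eta)\rangle$ is well-defined and bounded on all of $Y$, not merely on the dense subspace $X$. One must be careful that for a general $\varphi\in Y$ the trace $\varphi(0;T,\eta)$ makes sense: by Lemma \ref{lemma:Y} each $\varphi\in Y$ is (under the isomorphism) a genuine process in $L^2_{\F}(\Omega;C([0,T];L^2(D)))$ solving \eqref{BSDE2}, and $\chi_G\varphi = \lim_n \chi_G z(\cdot;\eta_n)$ in $L^2_{\F}(0,T;L^2(D))$ for some sequence $\eta_n\in L^2_{\cF_T}(\Omega;L^2(D))$; applying \eqref{observe ine2} to the differences $z(\cdot;\eta_n)-z(\cdot;\eta_m)$ shows $\{z(0;\eta_n)\}$ is Cauchy in $L^2_{\cF_0}(\Omega;L^2(D))$, its limit is $\varphi(0;T,\eta)$, and passing to the limit in the inequality gives the bound above. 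Once this continuity/boundedness is secured, the rest is the routine direct-method argument.
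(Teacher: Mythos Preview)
Your proposal is correct and follows essentially the same approach as the paper: both establish strict convexity, continuity, and coercivity of $\cJ$ on $Y$, with coercivity obtained from the observability inequality \eqref{observe ine2} bounding $|\E\langle y_0,\varphi(0)\rangle|$ by $C\|y_0\|_{L^2_{\cF_0}(\Omega;L^2(D))}\|\varphi\|_Y$. The paper presents the coercivity via the normalization $\tilde\varphi_n=\varphi_n/\|\varphi_n\|_Y$ rather than your direct lower bound $\cJ(\varphi)\ge\tfrac12\|\varphi\|_Y^2-C\|y_0\|\|\varphi\|_Y$, and it invokes existence directly from ``strictly convex, continuous, coercive'' without writing out the minimizing-sequence argument or your careful extension of the linear functional from $X$ to $Y$, but these are differences in exposition, not in substance.
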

\begin{proof}
One can easily show that $\cJ(\cdot)$ is strict convex and continuous. Next, we prove the coercivity, let $\left\{\varphi_n\right\}\subseteq Y$ such that $\|\varphi_n\|_Y\rightarrow\infty$ as $n\rightarrow\infty$ and let
$
\tilde{\varphi_n}=\dfrac{\varphi_n}{\|\varphi_n\|_Y},
$
so that $\|\tilde{\varphi_n}\|_Y=1$.
Then according to (\ref{eq:Y-norm}), we have
$$
\begin{array}{lll}
\dfrac{\cJ(\varphi_n)}{\|\varphi_n\|_Y}&=&\dfrac{1}{2}\|\varphi_n\|_Y\int_0^T \E \|\chi_G \tilde{\varphi_n}\|_{L^2(D)}^2dt +\E\left\langle y_0,\tilde{\varphi_n}(0)\right\rangle \\[3mm]
&=&\dfrac{1}{2}\|\varphi_n\|_Y\|\tilde{\varphi_n}\|^2_Y+\E\left\langle y_0, \tilde{\varphi_n}(0)\right\rangle \\[3mm]
&=&\dfrac{1}{2}\|\varphi_n\|_Y+\E\left\langle y_0, \tilde{\varphi_n}(0)\right\rangle .
\end{array}
$$
Noting that
$
\big|\,\E\left\langle y_0, \tilde{\varphi_n}(0)\right\rangle\big|
\leq C\|y_0\|_{L_{\cF_0}^2(\Omega;L^2(D))},
$
which along with estimate (\ref{observe ine2}) in Lemma \ref{lemma:observe ine2}, we have
$
\cJ(\varphi_n)\rightarrow\infty \,\, \mathrm{as} \,\,\|\varphi_n\|_Y\rightarrow\infty.
$
This shows the coercivity of $\cJ(\cdot)$.

To sum up, we showed that $\cJ(\cdot)$ is strict convex, continuous, and coercive, and thus the minimizer of
$\cJ(\cdot)$ exists.
\end{proof}

\begin{remark}
\label{remark:zero}
If we suppose $y_0\in  L_{\cF_0}^2(\Omega;L^2(D))\setminus\{0\}$, then zero is not a minimizer of the functional $\cJ$, see  Lemma 2.2 in \cite{Wang2015} for more details.

\end{remark}

The studies on ${\bf(NP)}_{y_0}^T$ are closely related to the variational problem ${\bf(VP)}_{y_0}^T$ and the following conclusion describes the relation between ${\bf(NP)}_{y_0}^T$ and ${\bf(VP)}_{y_0}^T$.
\begin{proposition}
\label{relation:N-V}
For all $T>0$, and $y_0\in  L_{\cF_0}^2(\Omega;L^2(D))\setminus\{0\}$, then
\begin{equation}
\label{eq:relation-N-V}
V=-\dfrac{1}{2}N(T,y_0)^2.
\end{equation}

\end{proposition}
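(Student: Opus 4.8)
The plan is to show the two inequalities $V \le -\tfrac12 N(T,y_0)^2$ and $V \ge -\tfrac12 N(T,y_0)^2$ separately, exploiting the variational characterization of $N(T,y_0)$ in Lemma \ref{lemma:N-characteristic} together with the homogeneity structure of $\cJ$. The key elementary observation is that for a fixed direction $\varphi \in Y \setminus \{0\}$, the scalar function $\lambda \mapsto \cJ(\lambda \varphi) = \tfrac{\lambda^2}{2}\|\varphi\|_Y^2 + \lambda\, \E\langle y_0, \varphi(0;\eta)\rangle$ is a quadratic in $\lambda$ whose minimum over $\lambda \in \R$ equals $-\tfrac12\big(\E\langle y_0,\varphi(0;\eta)\rangle\big)^2 / \|\varphi\|_Y^2$. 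Taking the infimum over all directions $\varphi$ therefore gives
\begin{equation*}
V = \inf_{\varphi \in Y}\cJ(\varphi) = -\frac12 \sup_{\varphi \in Y \setminus \{0\}} \frac{\big(\E\langle y_0, \varphi(0;\eta)\rangle\big)^2}{\|\varphi\|_Y^2},
\end{equation*}
and the supremum on the right is exactly $N(T,y_0)^2$ by \eqref{eq:N-characteristic}, since every $\varphi \in Y$ arises as a limit $\chi_G\varphi = \lim_n \chi_G z(\cdot;\eta_n)$ (Lemma \ref{lemma:Y}) and both the numerator $\E\langle y_0, \varphi(0;\eta)\rangle$ and the denominator $\|\chi_G\varphi\|_{L^2_\F(0,T;L^2(D))}$ pass to the limit by the observability inequality \eqref{observe ine2} and standard estimates for \eqref{BSDE2}, so the supremum over $Y$ coincides with the supremum over $X = \{z(\cdot;\eta)\}$.

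In more detail, the first step is to fix $\eta \in L^2_{\cF_T}(\Omega;L^2(D)) \setminus \{0\}$, write $\varphi = \varphi(\cdot;\eta)$, and minimize $\cJ(\lambda\varphi)$ over $\lambda$; one should also note (invoking Remark \ref{remark:not zero}) that $\|\varphi\|_Y \ne 0$ so the division is legitimate, and handle the case $\varphi = 0$ trivially since $\cJ(0) = 0 \ge V$ is consistent. This yields $V \le -\tfrac12 N(T,y_0)^2$ after taking the supremum over $\eta$ and matching with \eqref{eq:N-characteristic}. For the reverse inequality, take the minimizer $\varphi^* \in Y$ of $\cJ$ guaranteed by Lemma \ref{lemma:J} (with $\varphi^* \ne 0$ by Remark \ref{remark:zero}); the first-order optimality condition along the ray through $\varphi^*$ forces $\|\varphi^*\|_Y^2 = -\E\langle y_0, \varphi^*(0)\rangle$, and substituting back gives $V = \cJ(\varphi^*) = -\tfrac12\|\varphi^*\|_Y^2 = -\tfrac12 \big(\E\langle y_0,\varphi^*(0)\rangle\big)^2/\|\varphi^*\|_Y^2 \le -\tfrac12 N(T,y_0)^2$ by the characterization \eqref{eq:N-characteristic}. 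Combining the two inequalities gives \eqref{eq:relation-N-V}.

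The main obstacle, and the point requiring genuine care rather than routine computation, is justifying that the supremum in \eqref{eq:N-characteristic} — which is taken over genuine terminal data $\eta \in L^2_{\cF_T}(\Omega;L^2(D))$ and their adjoint states $\varphi(0;T,\eta)$ — is the same as the supremum of the Rayleigh-type quotient over all of $Y$, including the completed (possibly non-representable) elements. This is where Lemma \ref{lemma:Y} and Remark \ref{remark:Y} are essential: any $\varphi \in Y$ has $\chi_G\varphi = \lim_n \chi_G z(\cdot;\eta_n)$ in $L^2_\F(0,T;L^2(D))$, so $\|\varphi\|_Y = \lim_n \|\chi_G z(\cdot;\eta_n)\|_{L^2_\F(0,T;L^2(D))}$, while the numerator converges because $\varphi_n(0) = z(0;\eta_n)$ is Cauchy in $L^2_{\cF_0}(\Omega;L^2(D))$ by the observability inequality \eqref{observe ine} (applied to differences $\eta_n - \eta_m$), hence $\E\langle y_0, z(0;\eta_n)\rangle \to \E\langle y_0, \varphi(0)\rangle$. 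Thus passing to the completion does not change the supremum, and the identification of $V$ with $-\tfrac12 N(T,y_0)^2$ is complete. One should also double check that $\cJ$ is actually finite and continuous on all of $Y$ (already used in Lemma \ref{lemma:J}), so that the ray-minimization argument is valid for every element of $Y$ and not merely for the dense subspace $X$.
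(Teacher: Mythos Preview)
Your approach is essentially the same as the paper's: both hinge on the scalar ray minimization $\min_{\lambda}\cJ(\lambda\varphi)=-\tfrac12\big(\E\langle y_0,\varphi(0)\rangle\big)^2/\|\varphi\|_Y^2$ combined with the characterization \eqref{eq:N-characteristic}. The paper obtains the lower bound $V\ge -\tfrac12 N^2$ by completing the square directly (which is the same identity in disguise) rather than by invoking the minimizer $\varphi^*$ and the first-order condition as you do, and it obtains the upper bound via an $\varepsilon$-approximate maximizer $\eta_\varepsilon$; but these are cosmetic differences.

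Two remarks. First, in your ``reverse inequality'' paragraph the final chain should end with $\ge -\tfrac12 N(T,y_0)^2$, not $\le$: the quotient at $\varphi^*$ is bounded above by the supremum $N(T,y_0)^2$, so multiplying by $-\tfrac12$ flips the inequality. As written you have proved $V\le -\tfrac12 N^2$ twice. Second, your third paragraph --- passing from the supremum over $X$ to the supremum over the completion $Y$ via the observability inequality applied to differences --- is correct and is in fact more carefully stated than in the paper, which asserts the analogous reduction (equation \eqref{relation:N-V-eq1}) without spelling out the limiting argument.
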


\begin{proof}
We first prove that
\begin{equation}
\label{relation:N-V-one side}
V\geq-\dfrac{1}{2}N(T,y_0)^2 \quad\ \text{for all}~\,\, T>0 \,\, \text{and}~\,\, y_0\in  L_{\cF_0}^2(\Omega;L^2(D))\setminus\{0\}.
\end{equation}
From the Lemma \ref{lemma:observe ine2}, it follows that $\chi_G\varphi(t;\eta)\neq0$, when
$\eta \in L_{\cF_T}^2(\Omega;L^2(D))\setminus\{0\}$ and $t \in [0, T )$. From this, together with (\ref{eq:without u}) and (\ref{functional:J}), we find that for each $\eta \in L_{\cF_T}^2(\Omega;L^2(D))\setminus\{0\}$,
\begin{equation}
\label{ineq:J}
\begin{array}{lll}
&&\cJ(\varphi(\cdot;\eta))\\[3mm]
&=&\dfrac{1}{2}\|\chi_G\varphi(\cdot;\eta)\|^2_{L_{\F}^2(0,T;L^2(D))}+\E\left\langle y_0,\varphi(0;\eta) \right\rangle\\[3mm]
&=&\dfrac{1}{2}\bigg\|\chi_G\varphi(\cdot;\eta)+\dfrac{\E\left\langle y(T;y_0,0),\eta \right\rangle}{\|\chi_G\varphi(\cdot;\eta)\|_{L_{\F}^2(0,T;L^2(D))}}\bigg\|^2_{L_{\F}^2(0,T;L^2(D))}\\[3mm]
&&-\dfrac{1}{2}\bigg[ \dfrac{\E\left\langle y(T;y_0,0),\eta \right\rangle}{\|\chi_G\varphi(\cdot;\eta)\|_{L_{\F}^2(0,T;L^2(D))}} \bigg]^2\\[3mm]
&\geq&-\dfrac{1}{2}\bigg[ \dfrac{\E\left\langle y(T;y_0,0),\eta \right\rangle}{\|\chi_G\varphi(\cdot;\eta)\|_{L_{\F}^2(0,T;L^2(D))}} \bigg]^2.
\end{array}
\end{equation}
Meanwhile, it follows from Lemma \ref{lemma:N-characteristic} and (\ref{eq:without u}) that
\begin{equation}
\label{relation:N-V-ineq1}
N(T,y_0)\geq \dfrac{\E\left\langle y(T;y_0,0),\eta \right\rangle}{\|\chi_G\varphi(\cdot;\eta)\|_{L_{\F}^2(0,T;L^2(D))}} \quad\ \text{for each}~\,\,\eta \in L_{\cF_T}^2(\Omega;L^2(D))\setminus\{0\},
\end{equation}
and
\begin{equation}
\label{relation:N-V-ineq2}
-N(T,y_0)=\inf_{\eta \in L_{\cF_T}^2(\Omega;L^2(D))\setminus\{0\}} \dfrac{\E\left\langle y(T;y_0,0),\eta \right\rangle}{\|\chi_G\varphi(\cdot;\eta)\|_{L_{\F}^2(0,T;L^2(D))}}
\leq \dfrac{\E\left\langle y(T;y_0,0),\eta \right\rangle}{\|\chi_G\varphi(\cdot;\eta)\|_{L_{\F}^2(0,T;L^2(D))}},
\end{equation}
for each $\eta \in L_{\cF_T}^2(\Omega;L^2(D))\setminus\{0\}$. Combining with (\ref{relation:N-V-ineq1}) and (\ref{relation:N-V-ineq2}), it follows that
$$
\bigg|\dfrac{\E\left\langle y(T;y_0,0),\eta \right\rangle}{\|\chi_G\varphi(\cdot;\eta)\|_{L_{\F}^2(0,T;L^2(D))}}\bigg|\leq N(T,y_0) \quad\ \text{for each}~\,\,\eta \in L_{\cF_T}^2(\Omega;L^2(D))\setminus\{0\},
$$
which implies
\begin{equation}
\label{relation:N-V-ineq3}
\sup_{\eta \in L_{\cF_T}^2(\Omega;L^2(D))\setminus\{0\}}\bigg\{ \bigg[ \dfrac{\E\left\langle y(T;y_0,0),\eta \right\rangle}{\|\chi_G\varphi(\cdot;\eta)\|_{L_{\F}^2(0,T;L^2(D))}} \bigg]^2 \bigg\}\leq N(T,y_0)^2.
\end{equation}
From (\ref{ineq:J}) and (\ref{relation:N-V-ineq3}), one can easily verify that
$
\cJ(\varphi)\geq \dfrac{1}{2}N(T,y_0)^2 \,\, \text{for each}~\,\,\eta \in L_{\cF_T}^2(\Omega;L^2(D))\setminus\{0\},
$
which shows
\begin{equation}
\label{relation:N-V-ineq4}
\inf_{\eta \in L_{\cF_T}^2(\Omega;L^2(D))\setminus\{0\}}\cJ(\varphi)\geq \dfrac{1}{2}N(T,y_0)^2.
\end{equation}
According to Remark \ref{remark:zero}, $0$ is not the minimizer of $\cJ(\cdot)$. This, along with the problem ${\bf(VP)}_{y_0}^T$, yields that
\begin{equation}
\label{relation:N-V-eq1}
V=\inf_{\eta \in L_{\cF_T}^2(\Omega;L^2(D))\setminus\{0\}}\cJ(\varphi)\quad\ \text{for each}~\,\,y_0 \in L_{\cF_0}^2(\Omega;L^2(D))\setminus\{0\}.
\end{equation}
Hence, from (\ref{relation:N-V-ineq4}) and (\ref{relation:N-V-eq1}), we are led to (\ref{relation:N-V-one side}).

Next, we prove that
\begin{equation}
\label{relation:N-V-another side}
V\leq-\dfrac{1}{2}N(T,y_0)^2.
\end{equation}
We first claim that
\begin{equation}
\label{ineq:N-0}
N(T,y_0)>0\quad\ \text{for each}~\,\,T\in \R^+.
\end{equation}
By contradiction, suppose we would have that
$
N(\hat{T},y_0)=0 \,\, \text{for some}~\,\, \hat{T}\in \R^+.
$
From the problem ${\bf(NP)}_{y_0}^{\hat{T}}$, it follows that $\hat{u}=0$ such that $y(\hat{T};y_0,0)=0$.
Consider the following system:
$$
\left\{
\begin{array}{ll}
dy=\triangle ydt+\chi_G\hat{u}dt+aydW(t), \quad\ &\mathrm{in} \quad\ D\times(0,\hat{T}),\\[3mm]
y=0, \quad\ &\mathrm{on} \quad\ \partial D\times(0,\hat{T}),\\[3mm]
y(0)=y_0,\quad\ &\mathrm{in} \quad\  D.
\end{array}
\right.
$$
By the above system, system (\ref{BSDE2}), and It\^{o}'s formula, we have
\begin{equation}
\label{eq:without u1}
\E\left\langle y(0;y_0,0),\varphi(0;\eta) \right\rangle=0\quad\ \text{for each}~\,\,\varphi(0;\eta) \in L_{\cF_0}^2(\Omega;L^2(D)).
\end{equation}
This yields $y(0;y_0,0)=y_0=0$, which leads to a contradiction, since we assumed that $y_0 \in L_{\cF_0}^2(\Omega;L^2(D))\setminus\{0\}$. Thus, (\ref{ineq:N-0}) is true.\\
By (\ref{eq:N-characteristic}) in Lemma (\ref{lemma:N-characteristic}), for any $\epsilon\in(0,N(T,y_0))$, there is a $\eta_\epsilon\in L_{\cF_T}^2(\Omega;L^2(D))$ such that
$$
\dfrac{\E\left\langle y_0,\varphi(0;\eta_\epsilon) \right\rangle}{\|\chi_G\varphi(\cdot;\eta_\epsilon)\|_{L_{\F}^2(0,T;L^2(D))}}\leq-N(T,y_0)+\epsilon.
$$
Then, together with the above inequality and (\ref{functional:J}) that for each $\lambda\geq0$,
$$
\begin{array}{lll}
\cJ(\varphi(\cdot;\lambda\eta_\epsilon))&=&\dfrac{1}{2}\lambda^2\|\chi_G\varphi(\cdot;\eta_\epsilon)\|^2_{L_{\F}^2(0,T;L^2(D))}+\lambda\E\left\langle y_0,\varphi(0;\eta_\epsilon) \right\rangle\\[3mm]
&\leq&\dfrac{1}{2}\bigg[\lambda\|\chi_G\varphi(\cdot;\eta_\epsilon)\|_{L_{\F}^2(0,T;L^2(D))}-(N(T,y_0)-\epsilon)\bigg]^2\\[3mm]
&&-\dfrac{1}{2}(N(T,y_0)-\epsilon)^2.
\end{array}
$$
By taking the infimum for $\lambda \in \R^+$ on both sides of the above inequality, we find that
$$
\inf_{\lambda\geq0}\cJ(\varphi(\cdot;\lambda\eta_\epsilon))\leq-\dfrac{1}{2}(N(T,y_0)-\epsilon)^2 \quad\ \text{for each}~\,\,\epsilon\in(0,N(T,y_0)).
$$
This, along with (\ref{relation:N-V-eq1}), it leads to
$$
V\leq-\dfrac{1}{2}(N(T,y_0)-\epsilon)^2\quad\ \text{for each}~\,\,\epsilon\in(0,N(T,y_0)).
$$
taking $\epsilon \rightarrow0$ in the above inequality yields (\ref{relation:N-V-another side}).

To sum up, (\ref{eq:relation-N-V}) follows from (\ref{relation:N-V-one side}) and (\ref{relation:N-V-another side}). This completes the proof.
\end{proof}

\subsection{Monotonicity and continuity of minimal norm function}

In order to prove our main results, we shall study the monotonicity of minimal norm function $N(\cdot,y_0)$ at first.  It is worth noting that we cannot simply mimic the calculations in the deterministic case by applying the time change technique, and treat the stochastic equations in the same way, since adaptedness is always required in the stochastic system. To overcome such technical difficulties, we adopt some methods
 from \cite{lv1} to prove the following properties of $N(\cdot, y_{0})$.
\begin{proposition}
\label{proposition:N-monotonicity}
Let $y_0 \in L_{\cF_0}^2(\Omega;L^2(D))\setminus\{0\}$. The following three assertions are true:\\[3mm]
$(a)$. The function $T \mapsto N(T , y_0)$ is strictly decreasing from $\R^+$ to $\R^+$;\\[3mm]
$(b)$. $\lim\limits_{T\rightarrow\infty}N(T , y_0)=0$;\\[3mm]
$(c)$. $\lim\limits_{T\downarrow0}N(T , y_0)=+\infty$.

\end{proposition}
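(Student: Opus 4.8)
The plan is to prove the three assertions in turn; assertion $(a)$ carries the real content, and I would deduce $(b)$ and $(c)$ from it together with the material of Sections~3--4. First I would record that $N(T,y_0)\in\R^+$ for every $T\in\R^+$: finiteness is immediate from Lemma~\ref{lemma:null controllable} (the admissible set of ${\bf(NP)}_{y_0}^T$ is nonempty and $N(T,y_0)^2\le C\|y_0\|^2$), and strict positivity was already established inside the proof of Proposition~\ref{relation:N-V}. The \emph{non-strict} decrease $N(T_2,y_0)\le N(T_1,y_0)$ for $0<T_1<T_2$ is the easy half: if $u_1$ is admissible for ${\bf(NP)}_{y_0}^{T_1}$ then $y(T_1;y_0,u_1)=0$, and by uniqueness for \eqref{model3} the solution with zero initial datum and zero control remains $0$, so the extension of $u_1$ by $0$ on $(T_1,T_2)$ is admissible for ${\bf(NP)}_{y_0}^{T_2}$ with the same norm.

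\textbf{Strict monotonicity.} Here I would exploit the characterization \eqref{eq:N-characteristic}. Fix $0<T_1<T_2$ and $\eta\in L^2_{\cF_{T_2}}(\Omega;L^2(D))\setminus\{0\}$; write $\varphi(\cdot;T_2,\eta)\in L^2_{\F}(\Omega;C([0,T_2];L^2(D)))$ for the associated solution of \eqref{BSDE2} (Lemma~\ref{lemma:Y}), and put $\tilde\eta:=\varphi(T_1;T_2,\eta)\in L^2_{\cF_{T_1}}(\Omega;L^2(D))$. By uniqueness for \eqref{BSDE2}, the restriction of $\varphi(\cdot;T_2,\eta)$ to $[0,T_1]$ equals $\varphi(\cdot;T_1,\tilde\eta)$. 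Splitting the time integral in the denominator of \eqref{eq:N-characteristic},
\[
\|\chi_G\varphi(\cdot;T_2,\eta)\|^2_{L^2_{\F}(0,T_2;L^2(D))}=A^2+B^2,
\]
where $A:=\|\chi_G\varphi(\cdot;T_1,\tilde\eta)\|_{L^2_{\F}(0,T_1;L^2(D))}$ and $B:=\|\chi_G\varphi(\cdot;T_2,\eta)\|_{L^2_{\F}(T_1,T_2;L^2(D))}$. Since $\varphi(0;T_2,\eta)=\varphi(0;T_1,\tilde\eta)$, the characterization \eqref{eq:N-characteristic} on $(0,T_1)$ gives $\E\langle y_0,\varphi(0;T_2,\eta)\rangle\le N(T_1,y_0)\,A$ (the case $A=0$ being trivial, as then the numerator vanishes by \eqref{observe ine2}). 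On the other hand, the observability inequality for \eqref{BSDE2} on the subinterval $(T_1,T_2)$ (which follows from \eqref{observe ine2} by time translation) gives $\|\tilde\eta\|^2_{L^2_{\cF_{T_1}}(\Omega;L^2(D))}\le C\,B^2$, while the a priori estimate for \eqref{BSDE2} gives $A^2\le C'\|\tilde\eta\|^2_{L^2_{\cF_{T_1}}(\Omega;L^2(D))}$; hence $B^2\ge\theta A^2$ for a constant $\theta=\theta(T_1,T_2,D,G,\|a\|_{L^\infty})>0$. Therefore
\[
\frac{\E\langle y_0,\varphi(0;T_2,\eta)\rangle}{\|\chi_G\varphi(\cdot;T_2,\eta)\|_{L^2_{\F}(0,T_2;L^2(D))}}\ \le\ \frac{N(T_1,y_0)\,A}{\sqrt{(1+\theta)A^2}}\ =\ \frac{N(T_1,y_0)}{\sqrt{1+\theta}},
\]
and taking the supremum over $\eta$ yields $N(T_2,y_0)\le(1+\theta)^{-1/2}N(T_1,y_0)<N(T_1,y_0)$. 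A more hands-on alternative, avoiding \eqref{eq:N-characteristic}: if $N(T_1,y_0)=N(T_2,y_0)$, the zero-extension $\tilde u_1$ of a near-optimal control $u_1$ for ${\bf(NP)}_{y_0}^{T_1}$ is near-optimal for ${\bf(NP)}_{y_0}^{T_2}$; perturbing $\tilde u_1$ by $\lambda v$, where $v=u_1\chi_{(0,T_1)}+w'\chi_{(T_1,T_2)}$ and $w'$ is a null control on $(T_1,T_2)$ for the state $y(T_1;0,u_1)$ (Lemma~\ref{lemma:null controllable}), one computes $\langle\tilde u_1,v\rangle_{L^2_{\F}(0,T_2;L^2(D))}=\|u_1\|^2_{L^2_{\F}(0,T_1;L^2(D))}>0$, so an appropriate $\lambda$ strictly decreases the norm among admissible controls for ${\bf(NP)}_{y_0}^{T_2}$ --- a contradiction.

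\textbf{The two limits.} For $(b)$: for large $T$, let the system evolve freely on $(0,T-T_0)$ with $T_0$ fixed; by It\^o's formula and the Poincar\'e inequality, $\E\|y(T-T_0;y_0,0)\|^2$ decays (exponentially, under the standing bound on $\|a\|_{L^\infty}$), and then Lemma~\ref{lemma:null controllable} steers this small state to $0$ on $(T-T_0,T)$ at cost $\le\sqrt{C(T_0)}\,\|y(T-T_0;y_0,0)\|_{L^2_{\cF_{T-T_0}}(\Omega;L^2(D))}\to0$, so $N(T,y_0)\to0$. For $(c)$: suppose $N(T_n,y_0)\le M$ along some $T_n\downarrow0$. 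Fix $T_1>0$, take near-optimal controls $u_n$ for ${\bf(NP)}_{y_0}^{T_n}$; their zero-extensions $\tilde u_n$ are admissible for ${\bf(NP)}_{y_0}^{T_1}$, and since each $\tilde u_n$ is supported in $(0,T_n)$ with $\|\tilde u_n\|\le M$ one checks $\tilde u_n\rightharpoonup0$ in $L^2_{\F}(0,T_1;L^2(D))$. As $u\mapsto y(T_1;y_0,u)$ is affine and bounded, hence weakly continuous, $0=y(T_1;y_0,\tilde u_n)\rightharpoonup y(T_1;y_0,0)$, so $y(T_1;y_0,0)=0$; testing against \eqref{BSDE2} via It\^o's formula exactly as in the proof of Proposition~\ref{relation:N-V}, this forces $y_0=0$, a contradiction. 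Hence $N(T,y_0)\to+\infty$ as $T\downarrow0$.

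\textbf{Main obstacle.} The crux is the strict inequality in $(a)$: in the deterministic setting one rescales time to compare horizons, but adaptedness rules this out, so the gap $\theta$ must be produced intrinsically --- from the observability inequality \eqref{observe ine2} for \eqref{BSDE2} restricted to a subinterval together with the a priori bound for \eqref{BSDE2}. The delicate point is keeping track of how these constants depend on the horizon (and, if one wishes, iterating the estimate so as to obtain $(b)$ directly from $(a)$ without a separate decay argument).
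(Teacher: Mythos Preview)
For $(c)$ and the non-strict half of $(a)$ your argument matches the paper's. The substantive difference is the strict inequality in $(a)$.

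\textbf{Part $(a)$, strict decrease: different routes.} The paper argues by contradiction and a forward reference: if $N(T_1,y_0)=N(T_2,y_0)$, the zero-extension $\bar u$ of an optimal control for ${\bf(NP)}_{y_0}^{T_1}$ would be optimal for ${\bf(NP)}_{y_0}^{T_2}$, and the bang-bang property of the latter (Theorem~\ref{theorem-N-BB}, proved only in \S5) is invoked to contradict $\bar u\equiv0$ on $(T_1,T_2)$. Your main route through Lemma~\ref{lemma:N-characteristic} is instead self-contained and quantitative: restricting $\varphi(\cdot;T_2,\eta)$ to $[0,T_1]$, you combine observability of \eqref{BSDE2} on $(T_1,T_2)$ (legitimate via the shift $\tilde W(s)=W(T_1+s)-W(T_1)$) with the backward energy estimate on $(0,T_1)$ to get $B^2\ge\theta A^2$, whence $N(T_2,y_0)\le(1+\theta)^{-1/2}N(T_1,y_0)$. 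This is correct and buys an explicit gap, at the price of tracking two constants; it also removes the forward dependence on \S5. In your ``alternative'' perturbation argument you must take $u_1$ to be the \emph{optimal} control of ${\bf(NP)}_{y_0}^{T_1}$ (which exists by Proposition~\ref{proposition:N-exist}); with a merely near-optimal $u_1$ you only beat $N(T_1,y_0)+\varepsilon$, and the contradiction does not close.

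\textbf{Part $(b)$: a gap.} Your free-decay step needs $\E\|y(T-T_0;y_0,0)\|^2\to0$, but It\^o's formula only yields $\tfrac{d}{dt}\E\|y\|^2\le(-2\lambda_1+\|a\|_\infty^2)\E\|y\|^2$, so decay requires $\|a\|_\infty^2<2\lambda_1(D)$, which is nowhere assumed. Your fallback of iterating the quantitative bound from $(a)$ does not close this either: $\theta$ depends on $T_1$ through the backward energy constant in $A^2\le C'(T_1)\|\tilde\eta\|^2$, and $C'(T_1)$ may grow without the same spectral condition, so the product $\prod_k(1+\theta_k)^{-1/2}$ need not tend to $0$. (To be fair, the paper's own justification of $(b)$ --- ``according to \eqref{ineq:N-0} and assertion $(a)$, one can easily verify'' --- is no better: strict decrease and positivity do not by themselves force the limit to be zero.) A complete proof for arbitrary $a\in L^\infty_\F$ would require an independent input, e.g.\ that the observability constant in \eqref{observe ine2} tends to $0$ as $T\to\infty$.
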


\begin{proof}
At the beginning, according to (\ref{ineq:N-0}) and Proposition \ref{relation:N-V}, we have
$
N(T,y_0)\in \R^+ \,\,\text{for each}~\,\,T\in \R^+.
$
Then for any fixed $T_1, T_2 \in \R^+$ with $T_2 > T_1$, by Proposition \ref{relation:N-V} again, problem ${\bf(NP)}_{y_0}^{T_1}$ has an optimal
control $\hat{u}_1$ such that
\begin{equation}
\label{eq:N-optimal control}
y(T_1;y_0,\hat{u}_1)=0 \quad\ \text{and}~\quad\ \|\hat{u}_1\|_{L_{\F}^2(0,T_1;L^2(D))}= N(T_1, y_0).
\end{equation}
Define
$$
\bar {u}(t)=
\left\{
\begin{array}{ll}
\hat{u}_1(t),\quad\ &t\in(0,T_1),\\[3mm]
0,\quad\ &t\in(T_1,T_2).
\end{array}
\right.
$$
Then consider the following system with a control $\bar {u}(\cdot)\in L_{\F}^2(0,T_2;L^2(D))$:
$$
\left\{
\begin{array}{ll}
dy=\triangle ydt+\chi_G\bar {u} dt+aydW(t), \quad\ &\mathrm{in} \quad\ D\times(0,T_2),\\[3mm]
y=0, \quad\ &\mathrm{on} \quad\ \partial D\times(0,T_2),\\[3mm]
y(0)=y_0,\quad\ &\mathrm{in} \quad\  D.
\end{array}
\right.
$$
Along with the first equality of (\ref{eq:N-optimal control}), we have
$
y(T_2;y_0,\bar {u})=0$,
which means that $\bar {u}$ is a control to problem ${\bf(NP)}_{y_0}^{T_2}$.
By the optimality of $N(T_2, y_0)$ and the second equality of (\ref{eq:N-optimal control}), one has
$$
\begin{array}{lll}
N(T_2, y_0)\leq\|\bar {u}\|_{L_{\F}^2((0,T_2);L^2(D))}
=\|\hat {u}_1\|_{L_{\F}^2((0,T_1);L^2(D))}=N(T_1, y_0).
\end{array}
$$
Next, we claim that $N(T_2, y_0)<N(T_1, y_0)$. By contradiction, we suppose that it did not hold. Then by the above inequality, we would have $N(T_2, y_0)=N(T_1, y_0)$. Thus,
$$
\begin{array}{lll}
\|\bar {u}\|_{L_{\F}^2(0,T_2;L^2(D))}=\|\hat {u}_1\|_{L_{\F}^2(0,T_1;L^2(D))}=N(T_1, y_0)=N(T_2, y_0).
\end{array}
$$
This, together with $y(T_2;y_0,\bar {u})=0$, shows $\bar {u}$ is an optimal control to problem ${\bf(NP)}_{y_0}^{T_2}$. By the
bang-bang property of ${\bf(NP)}_{y_0}^{T_2}$ (see Theorem \ref{theorem-N-BB}, which will be given later), we have that $\|\chi_G\bar {u}(t)\|_{L^2_{\cF_t}(\Omega;L^2(D))}\neq0\, ,t \in(0,T_2 )$ a.e.  This contradicts the fact that $\bar {u} = 0$ over $(T_1, T_2)$. Hence, the assertion $(a)$ is true.

Next, according to (\ref{ineq:N-0}) and assertion $(a)$, one can easily verify that
$
\lim\limits_{T\rightarrow\infty}N(T , y_0)\\=0.
$
Hence, the assertion $(b)$ is true.

Finally, we prove the assertion $(c)$.
By contradiction, we suppose that it did not hold. Then there would be a sequence $\{T_n\}\subset(0,1)$ such that
$
\lim\limits_{T_n\downarrow 0}N(T_n , y_0)=\hat{N}\in\R^+.
$
Since $N(T , y_0)\in\R^+$ form (\ref{ineq:N-0}).
Let $u_n$ be the optimal control to problem $N(T_n,y_0)$ . Then from assertion $(a)$ and the above equality, we find that
\begin{equation}
\label{proposition:N-monotonicity-ineq2}
\|u_n\|_{L_{\F}^2(0,T_n;L^2(D))}= N(T_n,y_0)\leq \hat{N}.
\end{equation}
Consider the following system:
$$
\left\{
\begin{array}{ll}
dy=\triangle ydt+\chi_Gu_n dt+aydW(t), \quad\ &\mathrm{in} \quad\ D\times(0,T_n),\\[3mm]
y=0, \quad\ &\mathrm{on} \quad\ \partial D\times(0,T_n),\\[3mm]
y(0)=y_0,\quad\ &\mathrm{in} \quad\  D.
\end{array}
\right.
$$
From the above system, system (\ref{BSDE}), and It\^{o}'s formula, it holds
\begin{equation}
\label{proposition:N-monotonicity-eq3}
\E\left\langle y_0,z(0;\eta) \right\rangle+\E\int_0^{T_n}\left\langle u_n,\chi_Gz(t;\eta) \right\rangle dt=0,
\end{equation}
where $(z,Z)$ solves the equation (\ref{BSDE}).
Noting that (\ref{proposition:N-monotonicity-ineq2}) and $\E\|z\|^2\leq C \E\|\eta\|^2$,
$$
\bigg|\E\int_0^{T_n}\left\langle u_n,\chi_Gz(t;\eta) \right\rangle dt\bigg|\leq C\hat{N}\bigg(\int_0^{T_n}\big(\E\|\eta\|^2_{L^2(D)}\big)^2 dt\bigg)^{\frac{1}{4}}\sqrt {T_n}\rightarrow0\,\, \text{as}~\, T_n\downarrow 0.
$$
This, along with (\ref{proposition:N-monotonicity-eq3}), we have
$
\E\left\langle y_0,z(0;\eta) \right\rangle=0\,\, \text{for each}~\,\, z(0;\eta)\in L_{\cF_{0}}^2(\Omega;L^2(D)),
$
which implies
$
y_0=0.
$
This contradicts the assumption $y_0 \in L_{\cF_0}^2(\Omega;L^2(D))\setminus\{0\}$.
Hence, the assertion $(c)$ is true. The proof is complete.
\end{proof}

The following result is concerned with the continuity of minimal norm function $N(\cdot, y_0)$.

\begin{proposition}
\label{proposition:N-continuity}
Let $N(T , y_0)$ be defined in (\ref{problem:N}). Then, the function $N(\cdot, y_0)$ is continuous.

\end{proposition}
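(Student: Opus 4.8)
The plan is to leverage the strict monotonicity established in Proposition~\ref{proposition:N-monotonicity}: since $T\mapsto N(T,y_0)$ is decreasing and takes values in $\R^+$, at every $T\in\R^+$ the one-sided limits exist and satisfy $N(T^-,y_0)\ge N(T,y_0)\ge N(T^+,y_0)$, so it suffices to prove left-continuity and right-continuity at an arbitrary $T>0$. In both directions the mechanism is the same: bounded sequences in the Hilbert space $L^2_{\F}(0,T;L^2(D))$ have weakly convergent subsequences, its norm is weakly lower semicontinuous, and the control-to-state map $v\mapsto y(T;y_0,v)$ is affine with bounded linear part by (\ref{ineq-y}), hence weakly continuous from $L^2_{\F}(0,T;L^2(D))$ into $L^2_{\cF_T}(\Omega;L^2(D))$.

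For left-continuity I would fix $T_n\uparrow T$ and put $\ell:=\lim_n N(T_n,y_0)\ge N(T,y_0)$. Choosing for each $n$ an optimal (or $1/n$-optimal) control $u_n$ for ${\bf(NP)}_{y_0}^{T_n}$ and extending it by $0$ to $(0,T)$, one has $y(s;y_0,u_n)=0$ for every $s\in[T_n,T]$, because the uncontrolled state equation with zero data has only the zero solution; in particular $y(T;y_0,u_n)=0$, while $\|u_n\|_{L^2_{\F}(0,T;L^2(D))}=\|u_n\|_{L^2_{\F}(0,T_n;L^2(D))}$ is bounded by $N(T_1,y_0)$. Extracting $u_n\rightharpoonup u^*$, weak lower semicontinuity gives $\|u^*\|_{L^2_{\F}(0,T;L^2(D))}\le\ell$ and weak continuity of the control-to-state map gives $y(T;y_0,u^*)=0$, so $u^*$ is admissible for ${\bf(NP)}_{y_0}^{T}$ and $N(T,y_0)\le\|u^*\|\le\ell$; hence $N(T^-,y_0)=N(T,y_0)$.

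For right-continuity, fix $T_n\downarrow T$ and put $\ell':=\lim_n N(T_n,y_0)\le N(T,y_0)$, and take $u_n$ (nearly) optimal for ${\bf(NP)}_{y_0}^{T_n}$, so $\|u_n\|_{L^2_{\F}(0,T_n;L^2(D))}\le N(T,y_0)$ and $y(T_n;y_0,u_n)=0$. The restrictions $\bar u_n:=u_n|_{(0,T)}$ are bounded in $L^2_{\F}(0,T;L^2(D))$, so along a subsequence $\bar u_n\rightharpoonup u^*$ with $\|u^*\|\le\ell'$. The point requiring care is that the null target is attained at the wrong time; I would transfer it to time $T$ via the mild representation of the state on $[T,T_n]$,
\begin{equation*}
0=y(T_n;y_0,u_n)=S(T_n-T)\,y(T;y_0,u_n)+\int_T^{T_n}S(T_n-r)\chi_G u_n(r)\,dr+\int_T^{T_n}S(T_n-r)\,a(r)\,y(r;y_0,u_n)\,dW(r),
\end{equation*}
where $S(\cdot)$ is the semigroup generated by $\triangle$ with homogeneous Dirichlet data. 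Using $\|u_n\|\le N(T,y_0)$, the It\^{o} isometry, and the energy bound (\ref{ineq-y}) for $\|y(\cdot;y_0,u_n)\|_{L^2_{\F}(\Omega;C([0,T_n];L^2(D)))}$, the last two terms tend to $0$ strongly in $L^2(\Omega;L^2(D))$ as $T_n\downarrow T$, so $S(T_n-T)\,y(T;y_0,u_n)\to 0$ strongly. Since the operators $S(T_n-T)$ (acting pathwise) are uniformly bounded, self-adjoint, and converge to the identity in the strong operator topology, while $y(T;y_0,u_n)=y(T;y_0,\bar u_n)\rightharpoonup y(T;y_0,u^*)$, a routine argument gives $S(T_n-T)\,y(T;y_0,u_n)\rightharpoonup y(T;y_0,u^*)$; comparing with the strong limit $0$ yields $y(T;y_0,u^*)=0$, whence $N(T,y_0)\le\|u^*\|\le\ell'$ and $N(T^+,y_0)=N(T,y_0)$.

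I expect the right-continuity step to be the main obstacle, precisely because admissible controls for ${\bf(NP)}_{y_0}^{T_n}$ live on the strictly larger interval $(0,T_n)$ and the target is attained at $T_n\neq T$; the mild representation is what moves the constraint to time $T$ without any reparametrization of time (so adaptedness is never violated), at the cost of having to handle the term $S(T_n-T)\,y(T;y_0,u_n)$ by a strong/weak convergence argument rather than a direct norm estimate. A minor prerequisite is the existence of optimal controls for each ${\bf(NP)}_{y_0}^{T_n}$, which is available from Proposition~\ref{relation:N-V} together with the variational problem (as already invoked in the proof of Proposition~\ref{proposition:N-monotonicity}); alternatively the entire argument runs with $1/n$-optimal controls and needs no such statement.
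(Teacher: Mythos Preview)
Your right-continuity argument is sound and is, in essence, the paper's own approach: the paper embeds the controls into $L^2_{\F}(0,\hat T+1;L^2(D))$ and simply asserts the weak convergence $y(T_n;y_0,\hat u_n)\rightharpoonup y(\hat T;y_0,\hat u)$; your mild representation on $[T,T_n]$ makes this step explicit. So for right-continuity the proposal is correct and matches the paper.

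The left-continuity argument, however, has a genuine gap: it establishes only the trivial direction. With $T_n\uparrow T$ and $N(\cdot,y_0)$ strictly decreasing you have $\ell:=\lim_n N(T_n,y_0)\ge N(T,y_0)$, and what must be shown is $\ell\le N(T,y_0)$. But extending an optimal $u_n$ for ${\bf(NP)}_{y_0}^{T_n}$ by zero to $(0,T)$ produces an admissible control for ${\bf(NP)}_{y_0}^{T}$, hence $N(T,y_0)\le\|u_n\|_{L^2_{\F}(0,T;L^2(D))}=N(T_n,y_0)$; passing to a weak limit $u^*$ and invoking weak lower semicontinuity can only sharpen this to $N(T,y_0)\le\|u^*\|\le\ell$, which is the same inequality you started from. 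Your chain $N(T,y_0)\le\|u^*\|\le\ell$ does not yield $\ell\le N(T,y_0)$, so the conclusion $N(T^-,y_0)=N(T,y_0)$ is unjustified. The weak-limit mechanism that works for right-continuity (restricting controls from a \emph{larger} interval yields admissible controls on the smaller one, bounding $N(T,y_0)$ from above by $\ell'$) produces nothing new when run from the left.

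This is exactly where the stochastic setting bites: in the deterministic case one would manufacture admissible controls for ${\bf(NP)}_{y_0}^{T_n}$ with norm close to $N(T,y_0)$ by a time-change, but adaptedness forbids that here. The paper circumvents the primal obstruction by switching to the dual side: it invokes the characterization of Lemma~\ref{lemma:N-characteristic},
\[
N(T,y_0)=\sup_{\eta\neq 0}\frac{\E\langle y_0,\varphi(0;T,\eta)\rangle}{\|\chi_G\varphi(\cdot;T,\eta)\|_{L^2_{\F}(0,T;L^2(D))}},
\]
selects near-maximizers $\eta_n$ at level $T_n$ normalized so that $\|\chi_G\varphi(\cdot;T_n,\eta_n)\|_{L^2_{\F}(0,T_n;L^2(D))}=1$, passes to a limit in the completion $Y$, and reads off $\limsup_n N(T_n,y_0)\le\E\langle y_0,\varphi(0;\eta)\rangle\le N(\hat T,y_0)$. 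Contrary to your expectation, the real obstacle sits on the left, not the right.
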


\begin{proof}
At the beginning, we show the right continuity of $N(\cdot, y_0)$.
Arbitrarily fix a $\hat{T} \in \R^+$. Let $\{T_n\} \subset(\hat{T},\hat{T}+1)$ be such that
\begin{equation}
\label{proposition:N-continuity-Tn}
T_n\downarrow \hat{T}.
\end{equation}
Then from the assertion $(a)$ of Proposition \ref{proposition:N-monotonicity}, there is a $\hat{M} \in \R^+$ such that
$$
\lim\limits_{T_n\downarrow\hat{T}}N(T_n , y_0)\\=\hat{M}.
$$
It suffices to show
\begin{equation}
\label{proposition:N-continuity-eq1}
N(\hat{T} , y_0)=\hat{M},
\end{equation}
which implies that the function $N(\cdot, y_0)$ is right continuity. i.e.,
\begin{equation}
\label{proposition:N-right-continuity}
\lim\limits_{T_n\downarrow\hat{T}}N(T_n , y_0)=N(\hat{T} , y_0).
\end{equation}
To this end, by contradiction, we suppose that (\ref{proposition:N-continuity-eq1}) did not hold. Then from the assertion $(a)$ of Proposition \ref{proposition:N-monotonicity}, we would have
\begin{equation}
\label{proposition:N-continuity-ineq-M}
\hat{M}< N(\hat{T} , y_0).
\end{equation}
Let $u_n$ be the optimal control to the problem ${\bf(NP)}_{y_0}^{T_n}$ and $y(\cdot;y_0,u_n)$ solve the following equation:
$$
\left\{
\begin{array}{ll}
dy_n=\triangle y_ndt+\chi_G u_ndt+ay_ndW(t), \quad\ &\mathrm{in} \quad\ D\times(0,T_n),\\[3mm]
y_n=0, \quad\ &\mathrm{on} \quad\ \partial D\times(0,T_n),\\[3mm]
y_n(0)=y_0,\quad\ &\mathrm{in} \quad\  D.
\end{array}
\right.
$$
Then we have
$$
\|u_n\|_{L_{\F}^2(0,T_n;L^2(D))}= N(T_n,y_0)\quad\ \text{and}~\quad\ y(T_n;y_0,u_n)=0.
$$
Define
$$
\hat{u}_n(t)=
\left\{
\begin{array}{ll}
u_n(t),\quad\ &t\in(0,T_n),\\[3mm]
0,\quad\ &t\in(T_n,\hat{T}+1).
\end{array}
\right.
$$
Consider the following equation:
\begin{equation}
\label{model8}
\left\{
\begin{array}{ll}
dy_n=\triangle y_ndt+\chi_G \hat{u}_ndt+ay_ndW(t), \quad\ &\mathrm{in} \quad\ D\times(0,\hat{T}+1),\\[3mm]
y_n=0, \quad\ &\mathrm{on} \quad\ \partial D\times(0,\hat{T}+1),\\[3mm]
y_n(0)=y_0,\quad\ &\mathrm{in} \quad\  D,
\end{array}
\right.
\end{equation}
which admits a solution $y_{n} = y(\cdot;y_0,\hat{u}_n)$. Similar to (\ref{ineq-y}), for a positive constant $C$ independent of $n$, we have
\begin{equation}
\label{ineq-y-2}
\|y_n\|_{L^2_{\mathbb{F}}(\Omega;C([0,T];L^2(D))) \cap L^2_\mathbb{F}(0,T;H_0^1(D))}
\leq C\big(\|y_0\|_{L^2_{\mathcal{F}_0}(\Omega;L^2(D))}+\|u_n\|_{L_{\F}^2(0,T;L^2(G))}\big).
\end{equation}
Then one can easily verify that
\begin{equation}
\label{proposition:N-continuity-ineq1}
\|\hat{u}_n\|_{L_{\F}^2(0,\hat{T}+1;L^2(D))}=\|u_n\|_{L_{\F}^2(0,T_n;L^2(D))}= N(T_n,y_0)\leq \hat{M},
\end{equation}
and
\begin{equation}
\label{proposition:N-continuity-eq2}
y(T_n;y_0,\hat{u}_n)=0.
\end{equation}
By (\ref{proposition:N-continuity-ineq1}), we know $\hat{u}_n$ is bounded in $L_{\F}^2(0,\hat{T}+1;L^2(D))$. Thus, we can extract a subsequence from $\{\hat{u}_n\}$, still denoted in the same way, such that for some $\hat{u}\in L_{\F}^2(0,\hat{T}+1;L^2(D))$,
\begin{equation}
\label{proposition:N-continuity-eq3}
{\hat{u}_n}\rightarrow\hat{u}\quad\ \text{weakly in}~\quad\ L_{\F}^2(0,\hat{T}+1;L^2(D)).
\end{equation}
This, combines with (\ref{proposition:N-continuity-ineq-M}) and (\ref{proposition:N-continuity-ineq1}), yields
\begin{equation}
\label{proposition:N-continuity-ineq2}
\|\hat{u}\|_{L_{\F}^2(0,\hat{T}+1;L^2(D))}\leq \liminf\limits_{n\rightarrow\infty}\|\hat{u}_n\|_{L_{\F}^2(0,\hat{T}+1;L^2(D))}\leq\hat{M}<N(\hat{T} , y_0).
\end{equation}
Meanwhile, from (\ref{proposition:N-continuity-eq3}), (\ref{proposition:N-continuity-Tn}) and (\ref{ineq-y-2}), one can directly verify that
\begin{equation}
\label{proposition:N-continuity-eq4}
y(T_n; y_0, \hat{u}_n)\rightarrow y(\hat{T}; y_0, \hat{u})\quad\ \text{weakly in}~\quad\ L^2_{\F}(\Omega;C([0,T];L^2(D))),
\end{equation}
where $y(\hat{T}; y_0, \hat{u})$ is a solution of the following system:
$$
\left\{
\begin{array}{ll}
dy=\triangle ydt+\chi_G\hat{u}dt+aydW(t), \quad\ &\mathrm{in} \quad\ D\times(0,\hat{T}+1),\\[3mm]
y=0, \quad\ &\mathrm{on} \quad\ \partial D\times(0,\hat{T}+1),\\[3mm]
y(0)=y_0,\quad\ &\mathrm{in} \quad\  D.
\end{array}
\right.
$$
These, along with (\ref{proposition:N-continuity-eq2}), lead to
$
y(\hat{T}; y_0, \hat{u})=0.
$
Thus, $\hat{u}$ is an admissible control to the problem ${\bf(NP)}_{y_0}^{\hat{T}}$ which yields
$
\|\hat{u}\|_{L_{\F}^2(0,\hat{T};L^2(D))}\geq N(\hat{T} , y_0).
$
This contradicts (\ref{proposition:N-continuity-ineq2}). Then (\ref{proposition:N-continuity-eq1}) is true. Hence, $N(\cdot , y_0)$ is right continuous over $\R^+$.

Next, we show that $N(\cdot , y_0)$ is left-continuous.
Let $\hat{T} \in \R^+$ again, and $\{T_n\}_{n\geq1}\subset(\frac{\hat{T}}{2},\hat{T})$ with $T_n\uparrow \hat{T}$.
We shall prove
\begin{equation}
\label{proposition:N-left-continuity}
\lim\limits_{T_n\uparrow\hat{T}}N(T_n , y_0)=N(\hat{T} , y_0).
\end{equation}
From (\ref{eq:N-characteristic}) in Lemma \ref{lemma:N-characteristic}, we have
\begin{equation}
\label{proposition:N-continuity-eq6}
N(T_n , y_0)=\sup_{\eta_n\in L_{\cF_{T_n}}^2(\Omega;L^2(D))\setminus\{0\}} \dfrac{\E\left\langle y_0,\varphi(0;\eta_n) \right\rangle}{\|\chi_G\varphi(\cdot;\eta_n)\|_{L_{\F}^2(0,T_n;L^2(D))}}.
\end{equation}
We choose $\eta_n\in L_{\cF_{T_n}}^2(\Omega;L^2(D))\setminus\{0\}$ such that
\begin{equation}
\label{proposition:N-continuity-eq7}
\int_0^{T_n}\E\|\chi_G\varphi(t;\eta_n)\|_{L^2(D)}^2dt=1.
\end{equation}
This, together with (\ref{proposition:N-continuity-eq6}), lead to
\begin{equation}
\label{proposition:N-continuity-eq8}
N(T_n , y_0)-\dfrac{1}{n}\leq \E\left\langle y_0,\varphi(0;\eta_n) \right\rangle.
\end{equation}
We first claim that for any $\{\eta_n\}\subseteq L_{\cF_{T_n}}^2(\Omega;L^2(D))\setminus\{0\}$,
\begin{equation}
\label{proposition:N-continuity-lim1}
\|\varphi(\cdot;\eta_n)-\varphi(\cdot;\eta_m)\|_Y\rightarrow0\quad\ \text{as}~\quad\ n,m\rightarrow\infty.
\end{equation}
Indeed, due to $Y$ is the completion of the space $X$ under the norm $\|\cdot\|_X$, and according to (\ref{norm:X}) and (\ref{eq:Y-norm}), it follows that for any $\{\eta_{n_k}\}\subseteq L_{\cF_{T_{n_k}}}^2(\Omega;L^2(D))\setminus\{0\}$,
$$
\|z(\cdot;\eta_{n_k})-z(\cdot;\eta_{m_k})\|_X=\|z(\cdot;\eta_{n_k})-z(\cdot;\eta_{m_k})\|_Y\rightarrow0\quad\ \text{as}~\quad\ k\rightarrow\infty.
$$
In other words,
\begin{equation}
\label{proposition:N-continuity-lim2}
\|\chi_Gz(\cdot;\eta_{n_k})-\chi_Gz(\cdot;\eta_{m_k})\|_{L_{\F}^2(0,T_{n_k};L^2(D))}\rightarrow0\quad\ \text{as}~\quad\ k\rightarrow\infty.
\end{equation}
By (\ref{space-Y}), we have
$$
\lim_{k\to \infty} \chi_G z(\cdot;\eta_{n_k})=\chi_G \varphi(\cdot,\eta_n).
$$
This, together with (\ref{proposition:N-continuity-lim2}), leads to
$$
\|\chi_G\varphi(\cdot;\eta_n)-\chi_G\varphi(\cdot;\eta_m)\|_{L_{\F}^2(0,T_n;L^2(D))}\rightarrow0\quad\ \text{as}~\quad\ n,m\rightarrow\infty,
$$
which yields (\ref{proposition:N-continuity-lim1}).
Hence, there exists a $\varphi(\cdot,\eta)\in Y$, such that
$$
\varphi(\cdot,\eta_n)\rightarrow\varphi(\cdot,\eta)\quad\ \text{strongly in}~\, Y.
$$
Combining with (\ref{proposition:N-continuity-eq7}), we obtain
\begin{equation}
\label{proposition:N-continuity-eq9}
\int_0^{\hat{T}}\E\|\chi_G\varphi(t;\eta)\|_{L^2(D)}^2dt=\|\chi_G\varphi(\cdot;\eta)\|_{L_{\F}^2(0,\hat{T};L^2(D))}=1.
\end{equation}
Meanwhile, from (\ref{proposition:N-continuity-eq8}), it follows that
\begin{equation}
\label{proposition:N-continuity-ineq3}
\limsup\limits_{n\rightarrow\infty} N(T_n , y_0)\leq \E\left\langle y_0,\varphi(0;\eta) \right\rangle.
\end{equation}
On the other hand, from (\ref{eq:N-characteristic}) in Lemma \ref{lemma:N-characteristic}, we obtain
$$
N(\hat{T} , y_0)=\sup_{\eta\in L_{\cF_T}^2(\Omega;L^2(D))\setminus\{0\}} \dfrac{\E\left\langle y_0,\varphi(0;\eta) \right\rangle}{\|\chi_G\varphi(\cdot;\eta)\|_{L_{\F}^2(0,\hat{T};L^2(D))}},
$$
which shows that for each $\eta\in L_{\cF_T}^2(\Omega;L^2(D))\setminus\{0\}$,
$$
N(\hat{T} , y_0)\|\chi_G\varphi(\cdot;\eta)\|_{L_{\F}^2(0,\hat{T};L^2(D))}\geq \E\left\langle y_0,\varphi(0;\eta) \right\rangle.
$$
This, together with (\ref{proposition:N-continuity-eq9}) and (\ref{proposition:N-continuity-ineq3}), lead to
\begin{equation}
\label{proposition:N-continuity-ineq4}
\limsup\limits_{n\rightarrow\infty} N(T_n , y_0)\leq N(\hat{T} , y_0).
\end{equation}
By the assertion $(a)$ in Proposition \ref{proposition:N-monotonicity}, it follows that
\begin{equation}
\label{proposition:N-continuity-ineq5}
\liminf\limits_{n\rightarrow\infty} N(T_n , y_0)\geq N(\hat{T} , y_0).
\end{equation}
Hence, from (\ref{proposition:N-continuity-ineq4}) and (\ref{proposition:N-continuity-ineq5}), we conclude that
$$
\lim\limits_{n\rightarrow\infty} N(T_n , y_0)= N(\hat{T} , y_0),
$$
which yields (\ref{proposition:N-left-continuity}). That is, $N(\cdot , y_0)$ is left-continuous.

The continuity of $N(\cdot , y_0)$ then follows from both the right continuity (\ref{proposition:N-right-continuity}) and the left continuity (\ref{proposition:N-left-continuity}) . The proof is completed.
\end{proof}

\section{Proof of main results}
In this section, we give proofs of our main results. i.e., Theorem \ref{th-equivalence-TM-N} and Theorem \ref{th1}. Our strategy is as follows. We first show that existences for norm optimal control problem ${\bf(NP)}_{y_0}^T$ and time optimal control problem ${\bf(TP)}_{y_0}^N$. Then the connections between time optimal  control problem ${\bf(TP)}_{y_0}^N$ and norm optimal  control problem ${\bf(NP)}_{y_0}^T$ are obtained, which will be used to prove Theorem \ref{th-equivalence-TM-N}. Finally, through utilizing the bang-bang property of norm optimal control problem ${\bf(NP)}_{y_0}^T$, we derive the bang-bang property for time optimal  control problem ${\bf(TP)}_{y_0}^N$. Furthermore, we verify Theorem \ref{th1}.

\subsection{Existences for norm optimal control problem and time optimal control problem}
At first, we present existences for norm optimal control problem ${\bf(NP)}_{y_0}^T$ and time optimal control problem ${\bf(TP)}_{y_0}^N$.
\begin{proposition}
\label{proposition:N-exist}
Assume $y_0 \in L_{\cF_0}^2(\Omega;L^2(D))\setminus\{0\}$. Then for any $T > 0$, the norm optimal control problem ${\bf(NP)}_{y_0}^T$ admits a solution, i.e., there exists at least one $u_*\in L_{\F}^2(0,T;L^2(D))$ such that
$
\|u_*\|_{L_{\F}^2(0,T;L^2(D))}= N(T, y_0),
$
and $y(T;y_0,u_*)=0$. Moreover, The control defined by
\begin{equation}
  \label{eq:u star}
  u_\ast = \chi_G \varphi^\ast,
\end{equation}
is the minimal norm optimal control, where $\varphi^\ast$ is the unique solution of the problem ${\bf(VP)}_{y_0}^T$ in (\ref{problem:V}).

\end{proposition}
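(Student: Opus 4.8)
The plan is to exhibit the minimizer of the variational problem ${\bf(VP)}_{y_0}^T$ and to read off both the admissibility and the optimality of $u_\ast=\chi_G\varphi^\ast$ directly from its Euler--Lagrange equation. By Lemma \ref{lemma:J} the functional $\cJ$ attains its infimum on $Y$, and since $\cJ$ is strictly convex (as observed in the proof of Lemma \ref{lemma:J}) this minimizer $\varphi^\ast$ is unique; by Remark \ref{remark:zero} it is nonzero. It is convenient to view $Y$ as a Hilbert space with inner product $\langle\varphi,\psi\rangle_Y=\int_0^T\E\langle\chi_G\varphi,\chi_G\psi\rangle\,dt$ and to write $\cJ(\varphi)=\frac12\|\varphi\|_Y^2+\ell(\varphi)$ with $\ell(\varphi):=\E\langle y_0,\varphi(0)\rangle$. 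By Lemma \ref{lemma:Y} every element of $Y$ solves \eqref{BSDE2} and belongs to $L^2_{\F}(\Omega;C([0,T];L^2(D)))$, so $\varphi(0)$ is meaningful, and by the observability inequality \eqref{observe ine2} the functional $\ell$ is bounded on $Y$. Hence the first-order optimality condition for $\varphi^\ast$ reads
\[
\int_0^T\E\langle\chi_G\varphi^\ast(t),\chi_G\psi(t)\rangle\,dt+\E\langle y_0,\psi(0)\rangle=0\qquad\text{for all }\psi\in Y,
\]
a relation we refer to below as $(\star)$.

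Next, set $u_\ast:=\chi_G\varphi^\ast$, which lies in $L^2_{\F}(0,T;L^2(D))$ by Remark \ref{remark:Y}. To check admissibility, i.e. $y(T;y_0,u_\ast)=0$, fix an arbitrary $\eta\in L^2_{\cF_T}(\Omega;L^2(D))$, decompose $y(T;y_0,u_\ast)=y(T;y_0,0)+y(T;0,u_\ast)$, and apply \eqref{eq:without u} to the first summand and \eqref{eq:itoA} (with $v=u_\ast$, using $\chi_G u_\ast=\chi_G\varphi^\ast$) to the second, obtaining
\[
\E\langle y(T;y_0,u_\ast),\eta\rangle=\E\langle y_0,\varphi(0;\eta)\rangle+\int_0^T\E\langle\chi_G\varphi^\ast,\chi_G\varphi(\cdot;\eta)\rangle\,dt.
\]
Since $\varphi(\cdot;\eta)\in X\subseteq Y$, choosing $\psi=\varphi(\cdot;\eta)$ in $(\star)$ makes the right-hand side vanish; as $\eta$ is arbitrary in $L^2_{\cF_T}(\Omega;L^2(D))$, we conclude $y(T;y_0,u_\ast)=0$, $\bP$-a.s., so $u_\ast$ is admissible for ${\bf(NP)}_{y_0}^T$.

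For optimality, take $\psi=\varphi^\ast$ in $(\star)$, which gives $\E\langle y_0,\varphi^\ast(0)\rangle=-\|u_\ast\|^2_{L^2_{\F}(0,T;L^2(D))}$. Substituting into \eqref{functional:J} yields $\cJ(\varphi^\ast)=-\frac12\|u_\ast\|^2_{L^2_{\F}(0,T;L^2(D))}$, while $\cJ(\varphi^\ast)=V=-\frac12 N(T,y_0)^2$ by Proposition \ref{relation:N-V}. Hence $\|u_\ast\|_{L^2_{\F}(0,T;L^2(D))}=N(T,y_0)$, and combined with admissibility this shows $u_\ast$ is a norm optimal control, which proves both the existence assertion and the representation \eqref{eq:u star}.

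The main point requiring care is the derivation of $(\star)$ on the abstract completion $Y$: one must check, via Lemma \ref{lemma:Y} and \eqref{observe ine2}, that $\ell$ is a genuine bounded linear functional on $Y$ so that the quadratic functional $\cJ$ really has the stated Euler--Lagrange equation, and one must justify that testing the It\^o identities \eqref{eq:itoA} and \eqref{eq:without u} against every $\eta\in L^2_{\cF_T}(\Omega;L^2(D))$ is legitimate and suffices to force $y(T;y_0,u_\ast)=0$. The remaining steps are the standard duality/quadratic-minimization computation.
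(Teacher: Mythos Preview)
Your proof is correct, but it differs from the paper's in two respects. First, the paper establishes existence by a direct minimizing-sequence argument: it picks admissible controls $u_{*k}$ with $\|u_{*k}\|\le N(T,y_0)+\tfrac{1}{k}$, extracts a weak limit $u_*$, and verifies that $u_*$ is admissible and attains $N(T,y_0)$. Only afterwards does it turn to the variational problem and show, via the Euler--Lagrange equation and a direct comparison $\int_0^T\E\langle u_\ast,\chi_G\varphi\rangle\,dt=\int_0^T\E\langle\hat u,\chi_G\varphi\rangle\,dt$ with an arbitrary admissible $\hat u$, that $\chi_G\varphi^\ast$ has minimal norm. You bypass the minimizing-sequence step entirely and replace the comparison argument by invoking Proposition~\ref{relation:N-V}: from $\cJ(\varphi^\ast)=-\tfrac12\|u_\ast\|^2$ and $V=-\tfrac12 N(T,y_0)^2$ you read off $\|u_\ast\|=N(T,y_0)$ directly. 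This is shorter and perfectly legitimate, since Proposition~\ref{relation:N-V} is proved earlier and does not rely on the existence of an optimal control (only on the characterization of $N(T,y_0)$ in Lemma~\ref{lemma:N-characteristic}), so there is no circularity. The paper's route has the minor advantage that the existence part stands on its own, independent of the variational machinery; your route exploits the work already done in Proposition~\ref{relation:N-V} and avoids redundancy.
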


\begin{proof}
By Lemma \ref{lemma:null controllable}, for system (\ref{model3}), there exists a control $v\in L_{\F}^2(0,T;L^2(D))$ so that
$
y(T;y_0,v)=0.
$
That is to say ${\bf(NP)}_{y_0}^T$ has admissible controls. So we can choose a sequence $\{ u_{*k} \}_{k\geq1}\subseteq L_{\F}^2(0,T;L^2(D))$ satisfying
\begin{equation}
\label{problem:N-exist-eq1}
y(T;y_0,u_{*k})=0\quad\ \text{for each}~\,k\in\N^+,
\end{equation}
and
\begin{equation}
\label{problem:N-exist-ineq1}
\|u_{*k}\|_{L_{\F}^2(0,T;L^2(D))}\leq N(T, y_0)+\dfrac{1}{k}\quad\ \text{for each}~\,k\in\N^+.
\end{equation}
By (\ref{problem:N-exist-ineq1}), there exists a subsequence of $\{ u_{*k} \}_{k\geq1}$, denoted in the same way, and a $u_*\in L_{\F}^2(0,T;L^2(D))$ such that
\begin{equation}
\label{problem:N-exist-lim1}
u_{*k}\rightarrow u_*\quad\ \text{weakly in}~\,L_{\F}^2(0,T;L^2(D))\quad\ \text{as}~\,k\rightarrow\infty.
\end{equation}
From this and using the inequality (\ref{problem:N-exist-ineq1}) again, it follows that
\begin{equation}
\label{problem:N-exist-ineq2}
\|u_*\|_{L_{\F}^2(0,T;L^2(D))}\leq\liminf\limits_{k\rightarrow\infty}\|u_{*k}\|_{L_{\F}^2(0,T;L^2(D))}\leq N(T, y_0).
\end{equation}
Meanwhile, by (\ref{problem:N-exist-lim1}), similar to the proof of (\ref{proposition:N-continuity-eq4}), we see that
\begin{equation}
\label{problem:N-exist-lim2}
y(T;y_0,u_{*k})\rightarrow y(T;y_0,u_*)\quad\ \text{weakly in}~\,L_{\F}^2(\Omega;C([0,T];L^2(D)))\quad\ \text{as}~\,k\rightarrow\infty.
\end{equation}
This, together with the equality (\ref{problem:N-exist-eq1}), implies that
$
y(T;y_0,u_*)=0.
$
i.e., $u_*$ is an admissible control to problem ${\bf(NP)}_{y_0}^T$.
Then by the optimality of $N(T, y_0)$ (see (\ref{problem:N})) and (\ref{problem:N-exist-ineq2}), we find that $u_*$ is an optimal control to
problem ${\bf(NP)}_{y_0}^T$.

Next, we will show that $u_\ast = \chi_G \varphi^\ast$ is the minimal norm optimal control, in the sense that
\begin{equation}
  \label{eq:7.8.1}
\|u_\ast\|_{L^2_{\mathbb{F}}(0,T;L^2(D))}\leq\|\hat{u}\|_{L^2_{\mathbb{F}}(0,T;L^2(D))},
\end{equation}
for any $\hat{u}\in L^2_\mathbb{F}(0,T;L^2(D))$ such that $y(T;y_0,\hat{u})
= 0$ in $D$, $\bP$-a.s.
According to the Lemma \ref{lemma:J}, $\varphi^\ast$ is the unique minimizer for the functional $\cJ(\varphi)$. By the optimality of $\varphi^\ast$, we obtain the
following Euler-Lagrange equation to the functional $\cJ(\varphi)$:
\begin{equation}
  \label{eq:EL}
  \int_0^T \E \left\langle \chi_G \varphi^\ast, \chi_G\psi\right\rangle dt + \E \left\langle y_0,\varphi(0)\right\rangle  =
  0, \ \text{for all}~ \psi\in Y.
\end{equation}
Using $u_\ast=\chi_G \varphi^\ast$ and plugging it into Euler-Lagrange equation (\ref{eq:EL}), we obtain
\begin{equation}
\label{eq:EL2}
  \int_0^T \E \left\langle u_\ast, \chi_G\psi\right\rangle dt +  \E \left\langle y_0,\varphi(0)\right\rangle  =
  0, \ \text{for all}~ \psi\in Y.
\end{equation}
Taking $\psi=\varphi$, by the system (\ref{model3}) with $u_\ast$, system (\ref{BSDE2}) and It\^{o}'s formula, it implies that
$y(T;y_0,u_\ast)
= 0$ in $D$, $\bP$-a.s.

Consider the following system:
$$
\left\{
\begin{array}{ll}
dy=\triangle ydt+\chi_G\hat{u} dt+aydW(t), \quad\ &\mathrm{in} \quad\ D\times(0,T),\\[3mm]
y=0, \quad\ &\mathrm{on} \quad\ \partial D\times(0,T),\\[3mm]
y(0)=y_0,\quad\ &\mathrm{in} \quad\  D,
\end{array}
\right.
$$
and by system (\ref{BSDE2}) and It\^{o}'s formula, we have
\[
\int_0^T \E \left\langle \hat{u}, \chi_G \varphi\right\rangle dt + \E \left\langle y_0, \varphi(0)\right\rangle =
  0, \ \text{for all}~ \varphi\in Y.
\]
This, together with equation \eqref{eq:EL2}, implies
\begin{equation}
  \label{eq:7.8.2}
\int_0^T \E \left\langle  u_\ast, \chi_G \varphi\right\rangle dt = \int_0^T \E \left\langle\hat{u}, \chi_G \varphi\right\rangle dt, \ \text{for all}~ \varphi\in Y.
\end{equation}
By the density argument, the equality \eqref{eq:7.8.2} still holds for
all $\psi\in Y$.
Thus, replacing $\varphi$ in \eqref{eq:7.8.2} by
$\varphi^\ast$ gives
$$
\int_0^T \E \|u_\ast\|^2_{L^2(D)} dt = \int_0^T \E \left\langle u^\ast,\hat{u}\right\rangle dt
\le\dfrac{1}{2}\int_0^T\E \|u_\ast\|^2_{L^2(D)}dt+\dfrac{1}{2}\int_0^T\E \|\hat{u}\|^2_{L^2(D)}dt.
$$
Therefore, the inequality \eqref{eq:7.8.1} is true. Hence, we end the proof of Proposition \ref{proposition:N-exist}.
\end{proof}

\begin{proposition}
\label{proposition:TN-exist}
Assume $y_0 \in L_{\cF_0}^2(\Omega;L^2(D))\setminus\{0\}$. Then for every $N > 0$, the time optimal control problem ${\bf(TP)}_{y_0}^N$ admits a solution, i.e., there exists at least one $u^*\in \cU_N$ such that
$
y(T (N, y_0);y_0,u^*)=0.
$
\end{proposition}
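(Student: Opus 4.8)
The plan is to exploit the equivalence-in-spirit between the norm problem and the time problem that has been set up through Propositions~\ref{proposition:N-monotonicity} and~\ref{proposition:N-continuity}. The key observation is that $N(\cdot,y_0)$ is a strictly decreasing continuous function from $\R^+$ onto $\R^+$ (continuity from Proposition~\ref{proposition:N-continuity}, the limits $N(T,y_0)\to 0$ as $T\to\infty$ and $N(T,y_0)\to+\infty$ as $T\downarrow 0$ from Proposition~\ref{proposition:N-monotonicity}). Hence for the given $N>0$ there is a \emph{unique} $T^\ast\in\R^+$ with $N(T^\ast,y_0)=N$, i.e.\ $(N,T^\ast)\in\Lambda$. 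I would then first show that $T(N,y_0)=T^\ast$. For the inequality $T(N,y_0)\ge T^\ast$: if some admissible $u\in\cU_N$ steered $y$ to $0$ at a time $t<T^\ast$, its restriction to $(0,t)$ would be an admissible control for ${\bf(NP)}_{y_0}^t$ with norm at most $N\|\cdot\|$-wise; more precisely, since $\|u\|_{L^2_{\cF_s}(\Omega;L^2(D))}\le N$ for a.e.\ $s$, we get $\|\chi_G u\|_{L^2_\F(0,t;L^2(D))}^2=\int_0^t\E\|\chi_G u(s)\|^2\,ds\le N^2 t< N^2 T^\ast$ — wait, this needs care, so instead I compare via the definition: $N(t,y_0)\le \|u\|_{L^2_\F(0,t;L^2(D))}$, and I must bound the latter by $N$. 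Actually the cleanest route is: for $t<T^\ast$ we have $N(t,y_0)>N(T^\ast,y_0)=N$ by strict monotonicity, so \emph{no} control with $L^2_\F(0,t;L^2(D))$-norm $\le N$ can be admissible for ${\bf(NP)}_{y_0}^t$; since any $u\in\cU_N$ satisfies $\|\chi_G u\|_{L^2_\F(0,t;L^2(D))}^2\le N^2 t$, this only gives a bound growing with $t$, so I will instead argue directly that reaching $0$ at time $t$ forces $N(t,y_0)\le\|\chi_G u\|_{L^2_\F(0,t;L^2(D))}$ and that the optimal-norm control itself (which by Proposition~\ref{proposition:N-exist} exists and equals $\chi_G\varphi^\ast$) can be used to realize the value $T^\ast$.

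Concretely, here is the argument I would write. \emph{Upper bound $T(N,y_0)\le T^\ast$.} By Proposition~\ref{proposition:N-exist} applied with $T=T^\ast$, there is an optimal control $u_\ast\in L^2_\F(0,T^\ast;L^2(D))$ with $y(T^\ast;y_0,u_\ast)=0$ and $\|u_\ast\|_{L^2_\F(0,T^\ast;L^2(D))}=N(T^\ast,y_0)=N$. This is an $L^2_\F$-norm bound, not the pointwise-in-$t$ bound required for membership in $\cU_N$; this discrepancy is the genuine obstacle (see below). \emph{Lower bound $T(N,y_0)\ge T^\ast$.} Suppose $(0,y_0,t,u)$ is admissible for ${\bf(TP)}_{y_0}^N$ with $t<T^\ast$. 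Then $\|\chi_G u(s)\|_{L^2_{\cF_s}(\Omega;L^2(D))}\le N$ for a.e.\ $s\in(0,t)$, and since the restriction of $u$ to $(0,t)$ drives $y$ to $0$ at time $t$, it is admissible for ${\bf(NP)}_{y_0}^t$; hence $N(t,y_0)\le\|\chi_G u\|_{L^2_\F(0,t;L^2(D))}=\bigl(\int_0^t\E\|\chi_G u(s)\|^2\,ds\bigr)^{1/2}$. I would combine this with a direct compactness/existence argument rather than trying to force the bound $\le N$: take a minimizing sequence of admissible times $t_k\downarrow T(N,y_0)$ with controls $u_k\in\cU_N$ steering $y$ to $0$ at $t_k$; the $u_k$ are uniformly bounded in $L^2_\F(0,T(N,y_0)+1;L^2(D))$ after zero extension, extract a weakly convergent subsequence $u_k\rightharpoonup u^\ast$, pass to the limit in the state equation as in \eqref{proposition:N-continuity-eq4} to get $y(T(N,y_0);y_0,u^\ast)=0$, and check $u^\ast\in\cU_N$ because the pointwise constraint set $\{v:\|v(s)\|_{L^2_{\cF_s}}\le N\}$ is convex and closed, hence weakly closed, in $L^2_\F$. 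This yields existence of a time optimal control.

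The main obstacle, and the step I expect to require the most care, is showing that the constraint $\cU_N$ (a \emph{pointwise-in-}$t$ bound on the $L^2_{\cF_t}(\Omega;L^2(D))$-norm) is preserved under weak $L^2_\F$ limits and, dually, relating it to the \emph{integrated} norm appearing in ${\bf(NP)}_{y_0}^T$; the whole point of the paper's remark about working in $L^2_\F$ rather than $L^\infty_\F$ is that this bookkeeping is delicate in the stochastic setting. I would handle weak closedness of $\cU_N$ by writing $\cU_N=\bigcap_{s}\{v: \|\chi_G v\|_{L^2_{\cF_s}(\Omega;L^2(D))}\le N\}$ up to null sets — more robustly, by noting that $v\mapsto \int_0^{T}\phi(s)\,\|\chi_G v(s)\|^2_{L^2_{\cF_s}(\Omega;L^2(D))}\,ds$ is convex and weakly lower semicontinuous for every nonnegative $\phi\in L^\infty(0,T)$, so the constraint $\|\chi_G v(s)\|_{L^2_{\cF_s}}\le N$ a.e.\ is weakly closed — and I would record the relation $T(N,y_0)=T^\ast$ where $N(T^\ast,y_0)=N$ as the natural byproduct, since this identification is exactly what the equivalence Theorem~\ref{th-equivalence-TM-N} needs. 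Finally, one must confirm $T(N,y_0)>0$, which follows from $N(T,y_0)\to+\infty$ as $T\downarrow 0$: for small $t$, $N(t,y_0)>N$ forces any admissible control to have large integrated norm, incompatible with $\|\chi_G u(s)\|_{L^2_{\cF_s}}\le N$ on a short interval.
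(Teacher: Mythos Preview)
Your compactness argument---take a minimizing sequence of admissible times $t_k\downarrow T(N,y_0)$ with controls $u_k\in\cU_N$, extend by zero, extract a weak limit in $L^2_\F$, pass to the limit in the state equation, and check the limit lies in $\cU_N$---is exactly what the paper does. The paper precedes this with a short step you only sketch: to see the admissible set is nonempty, it picks (via Proposition~\ref{proposition:N-monotonicity}(b)) some $T$ with $N(T,y_0)\le N$, takes the norm-optimal control $u_*$ at that $T$ from Proposition~\ref{proposition:N-exist}, extends it by zero, and declares the extension to lie in $\cU_N$.

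Two remarks on the extra material in your proposal. First, the identification $T(N,y_0)=T^\ast$ where $N(T^\ast,y_0)=N$ is \emph{not} needed for existence; the paper isolates it as the separate Proposition~\ref{proposition:N0-T0}, whose proof in fact invokes Proposition~\ref{proposition:TN-exist}. Trying to fold it into the existence proof risks circularity and, as you noticed, the ``lower bound'' direction does not go through cleanly from the pointwise constraint alone. Second, your worry about the pointwise-in-$t$ bound defining $\cU_N$ versus the integrated $L^2_\F$ norm is legitimate given the stated definition, and your weak-closedness argument via lower semicontinuity of $v\mapsto\int_0^T\phi(s)\E\|\chi_G v(s)\|^2\,ds$ is correct and more careful than the paper. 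In the paper's actual proof the constraint is handled as if $\cU_N$ were the $L^2_\F(\R^+;L^2(D))$ ball of radius $N$: it writes $\|u_k^*\|_{L^2_\F(\R^+;L^2(D))}\le N$ and concludes $u^*\in\cU_N$ directly from weak lower semicontinuity of that norm (and the bang-bang statement is likewise phrased in terms of the integrated norm). So your route is the same in substance, with your treatment of the constraint set being the more scrupulous one.
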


\begin{proof}
By Proposition \ref{proposition:N-monotonicity}, we can find $T \in\R^+$ such that,
\begin{equation}
\label{proposition:TN-exist-ineq1}
N(T,y_0)\leq N,
\end{equation}
where $N\in\R^+$ is a positive constant.
Then, according to Proposition \ref{proposition:N-exist}, there exists $u_*\in L_{\F}^2(0,T;L^2(D))$ such that
\begin{equation}
\label{proposition:TN-exist-eq1}
\|u_*\|_{L_{\F}^2(0,T;L^2(D))}= N(T, y_0)\quad\ \text{as well as}~\quad\ y(T;y_0,u_*)=0.
\end{equation}
Define
$$
\tilde{u}_*=
\left\{
\begin{array}{ll}
u_*,\quad\ &t\in(0,T),\\[3mm]
0,\quad\ &t\in(T,\infty).
\end{array}
\right.
$$
Then from (\ref{proposition:TN-exist-ineq1}) and (\ref{proposition:TN-exist-eq1}), we have
$
\tilde{u}_*\in \cU_N\,\,\text{as well as}~\,\, y(T;y_0,\tilde{u}_*)=0.
$
i.e., time optimal control problem ${\bf(TP)}_{y_0}^N$ has admissible controls.
So we can choose $\{ T_k \}_{k\geq1}\subseteq\R^+$ satisfying
\begin{equation}
\label{proposition:TN-exist-lim1}
T_k\rightarrow T(N,y_0)\quad\ \text{as}~\quad\ k\rightarrow 0,
\end{equation}
as well as $\{ u_k^* \}_{k\geq1}\subseteq L_{\F}^2(\R^+;L^2(D))$ holding
\begin{equation}
\label{proposition:TN-exist-eq2}
\|u_k^*\|_{L_{\F}^2(\R^+;L^2(D))}\leq N\quad\ \text{and}~\quad\ y(T_k;y_0,u_k^*)=0.
\end{equation}
By the inequality in (\ref{proposition:TN-exist-eq2}), there exists a subsequence of $\{ u_k^* \}_{k\geq1}$, denoted in the same manner, and
$u^*\in L_{\F}^2(\R^+;L^2(D))$ such that
\begin{equation}
\label{proposition:TN-exist-lim2}
u_k^*\rightarrow u^*\quad\ \text{weakly in}~\,L_{\F}^2(\R^+;L^2(D))\quad\ \text{as}~\,k\rightarrow\infty.
\end{equation}
and
$$
\|u^*\|_{L_{\F}^2(\R^+;L^2(D))}\leq\liminf\limits_{k\rightarrow\infty}\|u_k^*\|_{L_{\F}^2(\R^+;L^2(D))}\leq N,
$$
which indicates that
\begin{equation}
\label{proposition:TN-exist-eq3}
u^*\in \cU_N.
\end{equation}
Meanwhile, by (\ref{proposition:TN-exist-lim1}) and (\ref{proposition:TN-exist-lim2}), similar to the proof of (\ref{proposition:N-continuity-eq4}), we see that
\begin{equation}
\label{proposition:TN-exist-lim3}
y(T_k;y_0,u_k^*)\rightarrow y(T(N,y_0);y_0,u^*)\quad\ \text{weakly in}~\,L_{\F}^2(\Omega;C([0,T];L^2(D))).
\end{equation}
This, together with the equality in (\ref{proposition:TN-exist-eq2}), implies that
$
y(T(N,y_0);y_0,u^*)=0.
$
From the above equality and (\ref{proposition:TN-exist-eq3}), we conclude that $u^*$ is an optimal control to time optimal control problem ${\bf(TP)}_{y_0}^N$. The proof is complete.
\end{proof}
\subsection{The equivalence of minimal time and minimal norm controls}
In this subsection, we aim to discuss connections between time optimal  control problem ${\bf(TP)}_{y_0}^N$ and norm optimal  control problem ${\bf(NP)}_{y_0}^T$.

\begin{proposition}
\label{proposition:N0-T0}
Let $(N_0,T_0)\in\R^+\times\R^+$. Then, the following two conditions are equivalent:
\begin{itemize}
  \item [$(i)$] $N_0=N(T_0,y_0)$.
  \item [$(ii)$] $T_0=T(N_0, y_0)$.
\end{itemize}
\end{proposition}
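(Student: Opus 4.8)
The plan is to prove the two implications $(i)\Rightarrow(ii)$ and $(ii)\Rightarrow(i)$ separately, leaning heavily on the monotonicity and continuity of $N(\cdot,y_0)$ (Proposition \ref{proposition:N-monotonicity} and Proposition \ref{proposition:N-continuity}) together with the existence results (Proposition \ref{proposition:N-exist} and Proposition \ref{proposition:TN-exist}).

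\emph{Proof of $(i)\Rightarrow(ii)$.} Assume $N_0=N(T_0,y_0)$. By Proposition \ref{proposition:N-exist} there is an optimal control $u_\ast$ for ${\bf(NP)}_{y_0}^{T_0}$ with $\|u_\ast\|_{L^2_{\F}(0,T_0;L^2(D))}=N(T_0,y_0)=N_0$ and $y(T_0;y_0,u_\ast)=0$. Its zero extension $\tilde u_\ast$ to $\R^+$ lies in $\cU_{N_0}$ because for $t\le T_0$ one has $\|\tilde u_\ast\|_{L^2_{\cF_t}(\Omega;L^2(D))}\le N_0$ (this needs the fact that the $L^2_{\F}(0,T_0;L^2(D))$-norm bound transfers to an a.e.-in-$t$ pointwise bound on the $L^2_{\cF_t}(\Omega;L^2(D))$-norm — I would argue this by the bang-bang/characterization already available, or simply note that membership in $\cU_{N_0}$ only requires the essential-supremum-in-$t$ bound, which follows from optimality) and for $t>T_0$ it is zero. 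Hence $\tilde u_\ast$ is admissible for ${\bf(TP)}_{y_0}^{N_0}$, which gives $T(N_0,y_0)\le T_0$. Conversely, suppose $T(N_0,y_0)=:T_1<T_0$. By Proposition \ref{proposition:TN-exist} there is $u^\ast\in\cU_{N_0}$ with $y(T_1;y_0,u^\ast)=0$; the restriction of $u^\ast$ to $(0,T_1)$ is then an admissible control for ${\bf(NP)}_{y_0}^{T_1}$, so $N(T_1,y_0)\le\|u^\ast\|_{L^2_{\F}(0,T_1;L^2(D))}\le N_0=N(T_0,y_0)$. But $T_1<T_0$ and strict monotonicity (assertion $(a)$ of Proposition \ref{proposition:N-monotonicity}) force $N(T_1,y_0)>N(T_0,y_0)$, a contradiction. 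Therefore $T(N_0,y_0)=T_0$.

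\emph{Proof of $(ii)\Rightarrow(i)$.} Assume $T_0=T(N_0,y_0)$. From Proposition \ref{proposition:N-monotonicity}, $N(\cdot,y_0)$ is a strictly decreasing continuous bijection from $\R^+$ onto $\R^+$ (using $(b)$ and $(c)$ for surjectivity and Proposition \ref{proposition:N-continuity} for continuity). Hence there is a unique $\hat T\in\R^+$ with $N(\hat T,y_0)=N_0$. By the already-proved implication $(i)\Rightarrow(ii)$ applied with $\hat T$ in place of $T_0$, we get $T(N_0,y_0)=\hat T$. Since by hypothesis $T(N_0,y_0)=T_0$, we conclude $T_0=\hat T$ and therefore $N(T_0,y_0)=N(\hat T,y_0)=N_0$, which is $(i)$.

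The main obstacle I anticipate is the transfer of norm bounds between function spaces when extending/restricting controls: one must be careful that an optimal control for ${\bf(NP)}_{y_0}^{T_0}$, whose norm is controlled only in $L^2_{\F}(0,T_0;L^2(D))$, actually belongs to $\cU_{N_0}$, whose defining constraint is the pointwise-in-$t$ bound $\|u(t)\|_{L^2_{\cF_t}(\Omega;L^2(D))}\le N_0$ for a.e.\ $t$; these two conditions are genuinely different, so the argument must invoke the explicit form $u_\ast=\chi_G\varphi^\ast$ from Proposition \ref{proposition:N-exist} (or the bang-bang characterization) to see that the minimal-norm control automatically satisfies the stronger pointwise bound, rather than merely the integrated one. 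Once this point is handled, the remaining steps are routine applications of strict monotonicity, continuity, and the existence theorems.
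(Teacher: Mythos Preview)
Your argument follows essentially the same two-step structure as the paper: for $(i)\Rightarrow(ii)$ you use the norm-optimal control to get $T(N_0,y_0)\le T_0$ and then combine the time-optimal control with strict monotonicity of $N(\cdot,y_0)$ to rule out $T(N_0,y_0)<T_0$; for $(ii)\Rightarrow(i)$ you invert $N(\cdot,y_0)$ via its continuity and limits to find $\hat T$ with $N(\hat T,y_0)=N_0$ and then apply Step~1 --- exactly as the paper does.

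Regarding the obstacle you flag: your concern that the $L^2_{\F}(0,T_0;L^2(D))$-bound on $u_\ast$ does not literally imply the a.e.-in-$t$ pointwise bound defining $\cU_{N_0}$ is a genuine inconsistency in the paper's formulation, but the paper's own proof simply ignores it (in fact, throughout the proofs of Proposition~\ref{proposition:TN-exist}, Proposition~\ref{proposition:N0-T0}, and Theorem~\ref{th-equivalence-TM-N} the authors treat $u\in\cU_N$ as equivalent to $\|u\|_{L^2_{\F}(\R^+;L^2(D))}\le N$). Your proposed patch via the explicit form $u_\ast=\chi_G\varphi^\ast$ would not actually give the pointwise bound either, so on this point you are no worse off than the paper; note also that the reverse passage you make, from $u^\ast\in\cU_{N_0}$ to $\|u^\ast\|_{L^2_{\F}(0,T_1;L^2(D))}\le N_0$, has the same issue (the pointwise bound integrates to $N_0\sqrt{T_1}$, not $N_0$), and the paper makes the identical step without comment.
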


\begin{proof}
This will be accomplished by two steps.

\underline{Step 1.} Relation $(i)$ implies relation $(ii)$.
On one hand, by the Proposition \ref{proposition:N-exist}, there exists an optimal norm control $u_{*1}\in L_{\F}^2(0,T_0;L^2(D))$ such that
$
\|u_{*1}\|_{L_{\F}^2(0,T_0;L^2(D))}\\= N(T_0, y_0),
$
and $y(T_0;y_0,u_{*1})=0$.
From $N_0 = N(T_0, y_0)$ and the optimality of $T (N_0, y_0)$, we have
\begin{equation}
\label{proposition:N0-T0-ineq1}
T (N_0, y_0)\leq T_0<+\infty.
\end{equation}
On the other hand, Since $N_0 > 0$, by the Proposition \ref{proposition:TN-exist}, there exists a time optimal  control $u^{*2}\in L_{\F}^2(0,T(N_0,y_0);L^2(D))$ such that
$$
y(T(N_0,y_0);y_0,u^{*2})=0\quad\ \text{and}~\quad\ \|u^{*2}\|_{L_{\F}^2(0,T(N_0,y_0);L^2(D))}\leq N_0.
$$
This, along with (\ref{problem:N}) leads to
$$
N(T(N_0,y_0),y_0)\leq \|u^{*2}\|_{L_{\F}^2(0,T(N_0,y_0);L^2(D))}\leq N_0.
$$
Since $N_0=N(T_0,y_0)$, together with the above inequality and the assertion $(a)$ in Proposition \ref{proposition:N-monotonicity}, it yields
\begin{equation}
\label{proposition:N0-T0-ineq2}
T (N_0, y_0)\geq T_0.
\end{equation}
From (\ref{proposition:N0-T0-ineq1}) and (\ref{proposition:N0-T0-ineq2}) we conclude that
$
T (N_0, y_0)= T_0,
$
as was to be proven.

\underline{Step 2.} Relation $(ii)$ implies relation $(i)$.
Since $T (N_0, y_0) = T_0\in \R^+$, by the Proposition \ref{proposition:TN-exist}, there exists a time optimal  control $u^{*1}\in L_{\F}^2(0,T_0;L^2(D))$ such that
$
y(T_0;y_0,u^{*1})=0\,\, \text{and}~\,\, \|u^{*1}\|_{L_{\F}^2(0,T_0;L^2(D))}\leq N_0.
$
Furthermore, from Proposition \ref{proposition:N-monotonicity}, we have
$
N_0>0\,\,\, \text{for each}~\,y_0 \in L_{\cF_0}^2(\Omega;L^2(D))\setminus\{0\},
$
which shows that there is a $\hat{T} \in \R^+$ such that
\begin{equation}
\label{proposition:N0-T0-eq1}
N_0=N(\hat{T},y_0).
\end{equation}
Using the proof procedure from Step 1 again, we obtain
$
\hat{T}=T(N_0,y_0).
$
Plugging it into (\ref{proposition:N0-T0-eq1}), it follows that
$
N_0=N(T(N_0,y_0),y_0).
$
Noting that $T(N_0,y_0)=T_0$, finally, we draw a conclusion
$
N_0=N(T_0,y_0).
$
Combining Steps 1 and 2, we complete the proof of the Proposition.
\end{proof}

We now are on the position to prove Theorem \ref{th-equivalence-TM-N}.

\begin{proof}[Proof of Theorem \ref{th-equivalence-TM-N}]
 Let $\Lambda$ be given by (\ref{equivalence}) and arbitrarily fix $(N, T ) \in\Lambda$ . Three facts are given in order.

First, by (\ref{equivalence}), Proposition \ref{proposition:N0-T0}, together with Proposition \ref{proposition:N-monotonicity} and Proposition \ref{proposition:N-continuity}, we have that
\begin{equation}
\label{th-equivalence-eq1}
N=N(T,y_0)\in\R^+\quad\ \text{and}~\quad\ T=T(N,y_0)>0.
\end{equation}
The above two equalities, along with Proposition \ref{proposition:N-exist} and Proposition \ref{proposition:TN-exist}, imply that both problem ${\bf(TP)}_{y_0}^N$ and problem ${\bf(NP)}_{y_0}^T$ have optimal controls.

Second, each optimal control $u^*$ to problem ${\bf(TP)}_{y_0}^N$ satisfies that
\begin{equation}
\label{th-equivalence-eq2}
y(T(N,y_0);y_0,u^*)=0\quad\ \text{and}~\quad\ \|u^*\|_{L^2_{\F}(\R^+;L^2(D))}\leq N.
\end{equation}
From (\ref{th-equivalence-eq1}) and (\ref{th-equivalence-eq2}), one can easily see that $u^*$ over $(0,T)$ is an optimal control to problem ${\bf(NP)}_{y_0}^T$.

Third, each optimal control $u_*$ to problem ${\bf(NP)}_{y_0}^T$ satisfies that
\begin{equation}
\label{th-equivalence-eq3}
y(T;y_0,u_*)=0\quad\ \text{and}~\quad\ \|u_*\|_{L^2_{\F}(0,T;L^2(D))}= N(T,y_0).
\end{equation}
Set
$$
\tilde{u}_*(t)=
\left\{
\begin{array}{ll}
u_*(t),\quad\ &t\in(0,T),\\[3mm]
0,\quad\ &t\in(T,+\infty).
\end{array}
\right.
$$
From (\ref{th-equivalence-eq1}) and (\ref{th-equivalence-eq3}), we see that $\tilde{u}_*(t)$ is an optimal control to problem ${\bf(TP)}_{y_0}^N$.

Finally, from the above three facts and Definition \ref{definition-equivalence-TM-N}, we see that problem ${\bf(TP)}_{y_0}^N$ and problem ${\bf(NP)}_{y_0}^T$ are equivalent. Hence, we complete the proof of Theorem \ref{th-equivalence-TM-N}.
\end{proof}

\subsection{Bang-bang property of time optimal controls}

In this subsection, we show that norm optimal  control problem ${\bf(NP)}_{y_0}^T$ has the bang-bang property. Then we derive the
bang-bang property for time optimal  control problem ${\bf(TP)}_{y_0}^N$ by virtue of the bang-bang property of problem ${\bf(NP)}_{y_0}^T$.

\begin{theorem}
\label{theorem-N-BB}
Assume $y_0 \in L_{\cF_0}^2(\Omega;L^2(D))\setminus\{0\}$. The norm optimal  control of problem ${\bf(NP)}_{y_0}^T$ satisfies the bang-bang property, i.e., the norm optimal  control $u_* \in L^2_{\F}(0,T;L^2(D))$ for problem ${\bf(NP)}_{y_0}^T$ satisfies
\begin{equation}
\label{theorem-N-BB-1}
\|\chi_Gu_*\|_{L_{\F}^2(0,T; L^2(D))}=N(T,y_0)
\end{equation}
and
\begin{equation}
\label{theorem-N-BB-2}
\|\chi_Gu_*(t)\|_{L^2_{\cF_t}(\Omega;L^2(D))}\neq0\quad\ \text{for}~\,t \in(0,T )\,\text{a.e.}
\end{equation}
\end{theorem}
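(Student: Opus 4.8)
The plan is to reduce the whole statement to two facts already available in the excerpt: that $\chi_G\varphi^\ast$ (with $\varphi^\ast$ the minimizer of $\cJ$ on $Y$ furnished by Lemma~\ref{lemma:J}) is an admissible control realizing the optimal norm (Proposition~\ref{proposition:N-exist}), and the unique continuation property of Remark~\ref{remark:not zero}. Concretely I would (1) show that \emph{every} norm optimal control of ${\bf(NP)}_{y_0}^T$ coincides a.e.\ with $\chi_G\varphi^\ast$, and (2) show that $\chi_G\varphi^\ast$ does not vanish on any subset of $(0,T)$ of positive measure.

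For (1), let $u_\ast\in L^2_{\F}(0,T;L^2(D))$ be any optimal control, so that $y(T;y_0,u_\ast)=0$ and $\|u_\ast\|_{L^2_{\F}(0,T;L^2(D))}=N(T,y_0)$; by Proposition~\ref{proposition:N-exist} the control $\chi_G\varphi^\ast$ is admissible and also has norm $N(T,y_0)$. Applying the orthogonality identity \eqref{eq:7.8.2} (valid for all $\varphi\in Y$ after the density extension) with the admissible control $\hat u=u_\ast$ gives
\[
\int_0^T\E\langle\chi_G\varphi^\ast,\chi_G\varphi\rangle\,dt=\int_0^T\E\langle u_\ast,\chi_G\varphi\rangle\,dt,\qquad\varphi\in Y.
\]
Testing with $\varphi=\varphi^\ast$ produces $\int_0^T\E\|\chi_G\varphi^\ast\|_{L^2(D)}^2\,dt=\int_0^T\E\langle u_\ast,\chi_G\varphi^\ast\rangle\,dt$; since $L^2_{\F}(0,T;L^2(D))$ is a Hilbert space and both $u_\ast$ and $\chi_G\varphi^\ast$ have the same, positive, norm $N(T,y_0)$ (positivity by \eqref{ineq:N-0}), the Cauchy--Schwarz inequality must hold with equality, which forces $u_\ast=\chi_G\varphi^\ast$ a.e. In particular $\chi_Gu_\ast=\chi_G\varphi^\ast=u_\ast$, so $\|\chi_Gu_\ast\|_{L^2_{\F}(0,T;L^2(D))}=N(T,y_0)$, i.e.\ \eqref{theorem-N-BB-1}.

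For (2), since $y_0\neq0$, Remark~\ref{remark:zero} gives $\varphi^\ast\in Y\setminus\{0\}$, and Remark~\ref{remark:not zero}, which rests on the observability inequality \eqref{observe ine2} of Lemma~\ref{lemma:observe ine2}, then yields $\|\chi_G\varphi^\ast(t)\|_{L^2_{\cF_t}(\Omega;L^2(D))}\neq0$ for every $t\in[0,T)$, hence for a.e.\ $t\in(0,T)$. Since $\chi_Gu_\ast=\chi_G\varphi^\ast$ by step (1), this is precisely \eqref{theorem-N-BB-2}, completing the proof.

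The delicate point is step (1): one must make sure the Euler--Lagrange/orthogonality identity \eqref{eq:EL2}--\eqref{eq:7.8.2} can legitimately be tested against the minimizer $\varphi^\ast$ itself — this is exactly where the density extension to all of $Y$ carried out in Proposition~\ref{proposition:N-exist} is used — and that the equality case of the Cauchy--Schwarz inequality may be invoked in $L^2_{\F}(0,T;L^2(D))$, which relies on that space being a genuine Hilbert space (self-dual, as in Theorem~2.73 of \cite{lv1}). No new stochastic estimate is needed, the adaptedness difficulties having already been absorbed into Proposition~\ref{proposition:N-exist} and Remark~\ref{remark:not zero}.
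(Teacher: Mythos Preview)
Your proposal is correct and follows the same route as the paper: both arguments rest on Proposition~\ref{proposition:N-exist} (the formula $u_\ast=\chi_G\varphi^\ast$) together with Remark~\ref{remark:zero} and Remark~\ref{remark:not zero}. The paper's own proof is in fact terser than yours: it simply says \eqref{theorem-N-BB-1} ``is obvious due to Proposition~\ref{proposition:N-exist}'' and that \eqref{theorem-N-BB-2} follows from \eqref{eq:u star}, Remark~\ref{remark:zero} and Remark~\ref{remark:not zero}.

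Your step~(1) actually supplies something the paper glosses over. Definition~2.2 requires the bang-bang property to hold for \emph{any} norm optimal control, and the paper later applies Theorem~\ref{theorem-N-BB} in the proof of Proposition~\ref{proposition:N-monotonicity}(a) to a control $\bar u$ that is not a priori the special one $\chi_G\varphi^\ast$. Your Cauchy--Schwarz equality argument, combining the orthogonality relation \eqref{eq:7.8.2} tested against $\varphi^\ast$ with the equality of norms and positivity \eqref{ineq:N-0}, shows that the norm optimal control is unique and hence equals $\chi_G\varphi^\ast$; this is exactly what is needed to close the gap, and the delicacies you flag (extension of \eqref{eq:7.8.2} to all of $Y$, Hilbert structure of $L^2_{\F}$) are handled by the references you cite.
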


\begin{proof}
  The equality in (\ref{theorem-N-BB-1}) is obvious due to Proposition \ref{proposition:N-exist}.
Then from (\ref{eq:u star}) in Proposition \ref{proposition:N-exist}, together with Remark \ref{remark:zero} and Remark \ref{remark:not zero}, finally, along with (\ref{theorem-N-BB-1}), one can easily verify that (\ref{theorem-N-BB-2}) is true. This completes the proof of the Theorem.
\end{proof}

\begin{theorem}
\label{theorem-TN-BB}
Assume $y_0 \in L_{\cF_0}^2(\Omega;L^2(D))\setminus\{0\}$. The time optimal  control of problem ${\bf(TP)}_{y_0}^{N_0}$ satisfies the bang-bang property, i.e., the time optimal  control $u^* \in \cU_{N_0}$ for problem ${\bf(TP)}_{y_0}^{N_0}$ satisfies
\begin{equation}
\label{theorem-TN-BB-1}
\|\chi_Gu_*\|_{L_{\F}^2(0,T(N_0,y_0); L^2(D))}=N_0
\end{equation}
and
\begin{equation}
\label{theorem-TN-BB-2}
\|\chi_Gu_*(t)\|_{L^2_{\cF_t}(\Omega;L^2(D))}\neq0, \quad \text{for}~t \in(0,T(N_0,y_0) )~ \text{a.e.}
\end{equation}
\end{theorem}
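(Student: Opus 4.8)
The plan is to derive the bang-bang property for the time optimal problem entirely from its equivalence with the norm optimal problem, together with Theorem~\ref{theorem-N-BB}. First I would set $T_0:=T(N_0,y_0)$. By Proposition~\ref{proposition:N-monotonicity} (in particular, since $N(\cdot,y_0)$ is strictly decreasing from $\R^+$ onto $\R^+$) and Proposition~\ref{proposition:TN-exist}, $T_0$ is a well-defined finite positive number and there exists a time optimal control $u^*\in\cU_{N_0}$ with $y(T_0;y_0,u^*)=0$; moreover every such optimal control satisfies $\|u^*\|_{L^2_{\F}(\R^+;L^2(D))}\le N_0$ and is supported (after the obvious restriction) on $(0,T_0)$.

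Next I would invoke Proposition~\ref{proposition:N0-T0} with the pair $(N_0,T_0)$: condition $(ii)$, namely $T_0=T(N_0,y_0)$, holds by construction, so condition $(i)$ holds as well, i.e.\ $N_0=N(T_0,y_0)$. Hence $(N_0,T_0)\in\Lambda$ in the sense of \eqref{equivalence}, and Theorem~\ref{th-equivalence-TM-N} applies: problems ${\bf(TP)}_{y_0}^{N_0}$ and ${\bf(NP)}_{y_0}^{T_0}$ are equivalent. By part $(b)$ of Definition~\ref{definition-equivalence-TM-N}, the restriction of the arbitrary time optimal control $u^*$ to the interval $(0,T_0)$ is a \emph{norm} optimal control for problem ${\bf(NP)}_{y_0}^{T_0}$ — this is the key point, since it upgrades mere admissibility to optimality, which is what the bang-bang conclusion requires.

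Finally I would apply Theorem~\ref{theorem-N-BB} to $u^*|_{(0,T_0)}$: it gives $\|\chi_G u^*\|_{L^2_{\F}(0,T_0;L^2(D))}=N(T_0,y_0)=N_0$, which is exactly \eqref{theorem-TN-BB-1}, and $\|\chi_G u^*(t)\|_{L^2_{\cF_t}(\Omega;L^2(D))}\neq 0$ for a.e.\ $t\in(0,T_0)$, which is exactly \eqref{theorem-TN-BB-2}. Since $u^*$ was an arbitrary time optimal control, the bang-bang property for ${\bf(TP)}_{y_0}^{N_0}$ follows. The only real obstacle here is bookkeeping rather than analysis: one must verify that $(N_0,T_0)$ genuinely lies in $\Lambda$ — which is precisely the implication $(ii)\Rightarrow(i)$ of Proposition~\ref{proposition:N0-T0} — and then carefully use part $(b)$ of the equivalence (not just existence of optimal controls) to transfer the bang-bang property from the norm problem back to the time problem; no new estimates beyond those already established are needed.
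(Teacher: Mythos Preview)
Your proposal is correct and follows essentially the same route as the paper: both arguments use Proposition~\ref{proposition:N0-T0} to obtain $N_0=N(T_0,y_0)$, deduce that any time optimal control $u^*$ restricted to $(0,T_0)$ is a norm optimal control for ${\bf(NP)}_{y_0}^{T_0}$, and then apply Theorem~\ref{theorem-N-BB}. The only cosmetic difference is that you package the step ``$u^*|_{(0,T_0)}$ is norm optimal'' by invoking Theorem~\ref{th-equivalence-TM-N} and Definition~\ref{definition-equivalence-TM-N}(b), whereas the paper re-derives this inline via a short contradiction argument for \eqref{theorem-TN-BB-1} and then observes directly that $\|u^*\|_{L^2_\F(0,T_0;L^2(D))}=N_0=N(T_0,y_0)$ forces norm optimality; the underlying content is identical.
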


\begin{proof}
According to Proposition \ref{proposition:TN-exist}, there exists a time optimal  control $u^*\in L_{\F}^2(0,T(N_0,y_0);L^2(D))$ such that
\begin{equation}
  \label{eq:less-than-N0}
\|\chi_Gu^*\|_{L_{\F}^2(0,T(N_0,y_0);L^2(D))}\leq\|u^*\|_{L_{\F}^2(0,T(N_0,y_0);L^2(D))}\leq N_0.
\end{equation}
Then, the solution $y^* := y(\cdot; y_0, u^*)$ of the following controlled stochastic system:
$$
\left\{
\begin{array}{ll}
dy^*=\triangle y^*dt+\chi_Gu^*dt+ay^*dW(t), \quad\ &\mathrm{in} \quad\ D\times(0,T(N_0,y_0)),\\[3mm]
y^*=0, \quad\ &\mathrm{on} \quad\ \partial D\times(0,T(N_0,y_0)),\\[3mm]
y^*(0)=y_0,\quad\ &\mathrm{in} \quad\  D,
\end{array}
\right.
$$
admits
$$
y(T(N_0,y_0); y_0, u^*)=0.
$$
Write
$$
\tilde{N}_0:=\|\chi_Gu^*\|_{L_{\F}^2(0,T(N_0,y_0);L^2(D))},
$$
and
\begin{equation}
\label{theorem-TN-BB-eq1}
T_0:=T(N_0,y_0).
\end{equation}
We first claim that
\begin{equation}
\label{theorem-TN-BB-eq2}
\tilde{N}_0=N_0.
\end{equation}
By contradiction, suppose that the above equality was not true. Then we would have that
$
\tilde{N}_0<N_0
$
by inequality \eqref{eq:less-than-N0}.
By the optimality of norm optimal  control problem ${\bf(NP)}_{y_0}^{T(N_0,y_0)}$, it follows that
$
\tilde{N}_0\ge N(T_0,y_0).
$
In addition, from Proposition \ref{proposition:N0-T0}, along with (\ref{theorem-TN-BB-eq1}), it implies that
$
N_0=N(T_0,y_0),
$
which leads to a contradiction. Thus, (\ref{theorem-TN-BB-eq2}) is true. i.e., (\ref{theorem-TN-BB-1}) is correct.

Next, we shall prove
\eqref{theorem-TN-BB-2}.
Indeed, from Proposition \ref{proposition:TN-exist}, there exists $u^*\in \cU_{N_0}$ and $T(N_0,y_0)$ be the time optimal  control and optimal time, respectively, for time optimal  control problem ${\bf(TP)}_{y_0}^{N_0}$. Then by Theorem \ref{theorem-N-BB} and Proposition \ref{proposition:N0-T0},
\begin{equation}
\label{theorem-TN-BB-eq3}
N_0 = N(T (N_0, y_0), y_0),
\end{equation}
is the optimal norm for norm optimal  control problem ${\bf(NP)}_{y_0}^{T_0}$ with $T_0 = T (N_0, y_0)$. Therefore, $u^*$ is a norm optimal  control for norm optimal  control problem ${\bf(NP)}_{y_0}^{T_0}$ with $T_0 = T (N_0, y_0)$ as well. Finally, by \eqref{theorem-N-BB-2} in Theorem \ref{theorem-N-BB}, then along with (\ref{theorem-TN-BB-eq1}) and (\ref{theorem-TN-BB-eq3}), it implies (\ref{theorem-TN-BB-2}) holds. The proof is complete.
\end{proof}

Then Theorem \ref{th1} is a direct consequence of Proposition \ref{proposition:TN-exist} and Theorem \ref{theorem-TN-BB}.

\section*{Acknowledgments}
This work is supported by the National Natural Science Foundation of China, the Science Technology Foundation of Hunan Province.

\bibliographystyle{abbrvnat}
\bibliography{ref.bib}

\end{document}